\newcommand{\norm}[1]{\left\lVert#1\right\rVert}
\newcommand{\R}{\mathbb{R}}
\newcommand{\ev}[1]{\mathbb{E} \left[ #1 \right]}
\newcommand{\evsub}[2]{\mathbb{E}_{#1} \left[ #2 \right]}
\DeclarePairedDelimiter{\abs}{\lvert}{\rvert}
\newcommand{\dd}{\, \text{d}}
\theoremstyle{plain}
\newtheorem{prototheorem}{Theorem}
\newtheorem{algorithm3}{Algorithm}
\newtheorem{theorem}[prototheorem]{Theorem}
\newtheorem{lemma}[prototheorem]{Lemma}
\newtheorem{proposition}[prototheorem]{Proposition}
\newtheorem{corollary}[prototheorem]{Corollary}
\newtheorem{assumption}[prototheorem]{Assumption}
\newtheorem{definition}[prototheorem]{Definition}
\newtheorem{remark}[prototheorem]{Remark}
\newtheorem{example}[prototheorem]{Example}
\newcommand{\eps}{\varepsilon}
\newcommand{\Z}{\ensuremath \mathbb{Z}}
\newcommand{\W}{\boldsymbol{\mathcal{W}}}
\newcommand{\PM}{\ensuremath \mathcal{P}}
\newcommand{\J}{\mathrm{Couplings}}
\newcommand{\law}{\operatorname{Law}}
\newcommand{\lb}[1]{{\lfloor #1 \rfloor_h}}
\newcommand{\ub}[1]{{\lceil #1 \rceil_h}}
\newcommand{\rn}[1]{\Romanbar{#1}}
\newtheorem*{algorithm3*}{Algorithm}
\begin{document}

\begin{frontmatter}
\title{Unadjusted Hamiltonian MCMC with \\  Stratified Monte Carlo Time Integration }
\runtitle{uHMC with sMC Time Integration}

\begin{abstract} 
A randomized time integrator is suggested for unadjusted Hamiltonian Monte Carlo (uHMC) which involves a very minor modification to the usual Verlet time  integrator, and hence, is easy to implement. 
For target distributions of the form $\mu(dx) \propto e^{-U(x)} dx$ where $U: \mathbb{R}^d \to \mathbb{R}_{\ge 0}$ is $K$-strongly convex but only $L$-gradient Lipschitz, and initial distributions $\nu$ with finite second moment,  coupling proofs reveal that an $\varepsilon$-accurate approximation of the target distribution
in $L^2$-Wasserstein distance $\boldsymbol{\mathcal{W}}^2$ can be achieved by the uHMC algorithm with randomized time integration using $O\left((d/K)^{1/3} (L/K)^{5/3} \varepsilon^{-2/3} \log( \boldsymbol{\mathcal{W}}^2(\mu, \nu) / \varepsilon)^+\right)$
gradient evaluations; whereas for such rough target densities  the corresponding complexity of the  uHMC algorithm with Verlet time integration is in general $O\left((d/K)^{1/2} (L/K)^2 \varepsilon^{-1} \log( \boldsymbol{\mathcal{W}}^2(\mu, \nu) / \varepsilon)^+ \right)$.  Metropolis-adjustable randomized time integrators are also provided.    
\end{abstract}

\begin{aug}
\author[A]{\fnms{Nawaf} \snm{Bou-Rabee}\ead[label=e1]{nawaf.bourabee@rutgers.edu}},
\and
\author[B]{\fnms{Milo} \snm{Marsden}\ead[label=e2]{mmarsden@stanford.edu}}
\address[A]{Department of Mathematical Sciences \\ Rutgers University Camden \\ 311 N 5th Street \\ Camden, NJ 08102 USA \printead[presep={,\ }]{e1}} 
\address[B]{Department of Mathematics \\ Stanford University \\ 450 Jane Stanford Way \\ Stanford, CA 94025 USA \printead[presep={,\ }]{e2}}
\end{aug}

\begin{keyword}[class=MSC2010]
\kwd[Primary ]{60J05}
\kwd[; secondary ]{65C05,65P10}
\end{keyword}

\begin{keyword}
\kwd{MCMC}
\kwd{Hamiltonian Monte Carlo}
\kwd{Couplings}
\end{keyword}
\end{frontmatter}

\maketitle

\section{Introduction}

Hamiltonian Monte Carlo (HMC) is a  gradient-based MCMC method aimed at target probability distributions of the form $\mu(dx) \propto e^{-U(x)} dx$ where $U: \mathbb{R}^d \to \mathbb{R}_{\ge 0}$ is continuously differentiable.
A defining characteristic of HMC is that it incorporates a measure-preserving Hamiltonian flow per transition step \cite{DuKePeRo1987,Ne2011}. This flow is typically approximated using a \emph{deterministic} time integrator, and the discretization bias can either be borne (\emph{unadjusted} HMC or \emph{uHMC} for short) or eliminated by a Metropolis-Hastings filter (\emph{adjusted} HMC).  

In this work, \emph{randomized} time integrators are developed for unadjusted HMC, which improve upon the current state of the art for \emph{rough target densities} \cite{vogrinc2021counterexamples}. That is, we consider the case where $U$ is assumed differentiable with $L$-Lipschitz gradient. But, crucially, we do not assume any regularity beyond this and in particular do not assume a Lipschitz Hessian.

Furthermore, we assume throughout this paper that $U$ is $K$-strongly convex and we state guarantees in $L^2$-Wasserstein distance $\W^2$. The strong convexity assumption can be relaxed to asymptotic strong convexity --- as in \cite{BoEbZi2020,cheng2018sharp, BouRabeeSchuh2023}. However, the resulting contraction rates, and in turn, asymptotic bias and complexity estimates will then depend on model and hyperparameters in a more intricate way. On the other hand, under global strong convexity, the dependence on model and hyperparameters is more transparent,  and therefore, this setting  has been a focus of much of the extant literature \cite{chen2020fast,lee2018algorithmic,monmarche2022united,shen2019randomized} --- as we briefly review below.

\subsection{State of the Art}  At present, the  Verlet time integrator is the method of choice  for both unadjusted and adjusted HMC \cite{BoSaActaN2018}. This is for sound reasons.  Indeed, the Verlet integrator is cheap; like forward Euler it requires only one new gradient evaluation per integration step. The Verlet integrator also has the maximal stability interval for the simple harmonic model problem\footnote{More precisely, among palindromic splitting methods with a fixed computational budget of $N$ evaluations of the potential force per step, the method with the longest stability interval is a concatenation of $N$ Verlet steps \cite[Section 4.5]{BoSaActaN2018}.}\cite{BlCaSa2014,casas2022new}. These properties are quite relevant to uHMC \cite{BouRabeeSchuh2023}. Moreover, the geometric properties of the Verlet integrator (symplecticity and reversibility) are key to making the method Metropolis-adjustable \cite{FaSaSk2014,BoSaActaN2018}.  Unsurprisingly, most research work on HMC has been devoted to studying unadjusted/adjusted HMC with Verlet time integration.

Under the assumption that the potential $U$ has Lipschitz Hessian the Verlet integrator is also second-order accurate.  Absent this assumption -- in the rough potential setting -- the accuracy of the integrator drops to first order. In turn, this drop in accuracy reduces the efficiency of HMC as an MCMC method and it is natural then to ask whether using a different integration scheme can produce a more efficient MCMC method. The work of Lee, Song and Vempala \cite{lee2018algorithmic}, which suggests a  collocation method for the Hamiltonian flow, answers in the affirmative.
This collocation method relies on a choice of basis functions (usually polynomials up to a certain degree) to represent the exact Hamiltonian flow, and uses a nonlinear solver per transition step.  General complexity guarantees are given for these collocation methods, and they specifically consider the special case of a basis of piecewise quadratic polynomials defined on a time grid of step size $h$. In this case, Lee, Song and Vempala prove that uHMC with collocation can in principle produce an $\eps$-accurate approximation of the target  $\mu$ in $\W^2$ distance using \begin{equation} 
\label{collocation_complexity}
O\left( \left(\frac{d}{K} \right)^{1/2} \left( \frac{L}{K} \right)^{7/4} \eps^{-1} \log \left( \frac{d}{K^{1/2}} \frac{L}{K} \frac{1}{\eps} \right)^+    \right) \quad \text{gradient evaluations}
\end{equation}
when initialized at the minimum of $U$ and run with duration $T \propto K^{1/4}/L^{3/4}$ where $\log(\mathsf{a})^+ = \max(\log(\mathsf{a}), 0)$ for $\mathsf{a} > 0$.  The ratio $L/K$ is known as the \emph{condition number} of the function $U$, and it satisfies $L/K \ge 1$.   In each transition step of uHMC, $h$ is chosen to satisfy $h^{-1} \propto LT^3(|v| + |\nabla U(x)| T)K^{1/2}(L/K)^{3/2} \eps^{-1} $, where $x,v \in \mathbb{R}^d$ are respectively the initial position and velocity at the current transition step. See \cite[Theorem 1.6]{lee2018algorithmic} for a detailed statement.  Remarkably, this complexity guarantee requires no further regularity beyond  $K$-strong convexity and  $L$-gradient Lipschitz-ness of $U$.   In practice, because uHMC with collocation requires a nonlinear solve per transition step, its widespread use is currently limited.



For comparison, the corresponding complexity for uHMC with Verlet time integration is \begin{equation} 
\label{verlet_complexity}
O\left( \left(\frac{d}{K} \right)^{1/2} \left( \frac{L}{K} \right)^{2} \eps^{-1} \log \left( \frac{\W^2(\mu,\nu)}{\eps} \right)^+  \right) \quad \text{gradient evaluations} 
\end{equation}
\sloppy when initialized from a distribution $\nu$ with finite second moment and run with time step size $h \propto (L/K)^{-3/2} d^{-1/2} \eps$ and duration $T \propto L^{-1/2}$.  See \cite[Chapter 5]{BouRabeeEberleLectureNotes2020} for detailed statements and proofs of \eqref{verlet_complexity}; which  is based on \cite[Appendix A]{BouRabeeSchuh2023}. 
Note that uHMC with Verlet  underperforms uHMC with collocation  in terms of condition number dependence. This substandard performance is due to the above mentioned drop in accuracy of the Verlet scheme, since uHMC with Verlet then requires a small time step size in order to resolve the asymptotic bias.

In principle, adjusted HMC can filter out all of the asymptotic bias  due to time discretization.  One would therefore hope that the dependence of the complexity on the accuracy parameter $\varepsilon$ is at most logarithmic with adjustment.  This result has recently been demonstrated under higher regularity assumptions and some restrictions on the initial conditions; specifically, assuming $U$ is strongly convex, gradient Lipschitz and Hessian Lipschitz, Chen, Dwivedi, Wainwright, and Yu use a clever conductance argument to prove that adjusted HMC with Verlet from a `warm' start can achieve $\eps$ accuracy in total variation distance using $O(d^{11/12} (L/K) \log(1/\varepsilon)^+)$ gradient evaluations \cite{chen2020fast}. In the same setting, implementable starting distributions are also considered; specifically, from a `feasible' start the dimension and condition number dependence become slightly worse. 
At present,  it remains an open problem to prove such bounds in the rough target density case and to relax the warm/feasible start conditions. 


For analogous MCMC algorithms based on underdamped  Langevin dynamics (ULD), the Randomized Midpoint Method (RMM) of Shen and Lee demonstrates that randomized time integrators can achieve provably optimal performance in the rough target density case \cite{shen2019randomized}.  The RMM method is essentially a randomized time integrator for ULD.  In particular, Shen and Lee prove that  RMM can produce an $\eps$-accurate approximation of the target in $\W^2$ using
\begin{equation} \label{rmm_complexity} O \left(  \left( \frac{d}{K} \right)^{1/3}  \left( \frac{L}{K} \right) ^{7/6}\eps^{-2/3} \left(\log \left(  \frac{d^{1/2}}{K^{1/2} \eps} \right)^+\right)^{4/3}  \right) \quad \text{gradient evaluations}   \end{equation} 
when initialized at rest (i.e., with zero initial velocity) within $(d/K)^{1/2}$ of the minimizer of $U$ and with mass $L$, friction parameter $2$, and time step size $h \propto  (K/d)^{1/3} L^{-1/6} \eps^{2/3}$.
The proof uses a perturbative approach that leverages contractivity of exact ULD \cite{cheng2018underdamped, dalalyan2020sampling}. 
Ergodicity of the RMM chain and a $3/2$-order of accuracy for the $\W^2$-asymptotic bias was subsequently proven in \cite{ErgodicityRMMHYB}. Additionally, Cao, Lu, and Wang  demonstrate the optimality of RMM among ULD based MCMC algorithms in the rough target density setting \cite{Cao_2021_IBC}.
The paper also surveys the existing literature on information theoretic lower bounds for randomized time integrators for ODEs and SDEs, such as \cite{Kacewicz2004,Kacewicz2005,Daun2011}.

Another related work is the shifted ODE method for ULD due to Foster, Lyons and Oberhauser \cite{FTOshiftedode2021}. In the  strongly convex and rough potential setting considered here, the \emph{exact} shifted ODE method can in principle produce an $\eps$-accurate approximation of the target distribution in $\W^2$ distance using  $O(d^{1/3}\eps^{-2/3}  \log (1/\eps)^+)$ gradient evaluations with better complexity guarantees under higher regularity assumptions.  The shifted ODE method is inspired by rough path theory, in which SDEs are realized as instances of Controlled Differential Equations (CDE). In particular, the shifted ODE method is constructed by tuning a controlling path such that the Taylor expansion of the CDE solution has the same low order terms as ULD.  In practice, the ODE they obtain cannot be exactly solved, and consequently, they propose two discretizations based on a third order Runge-Kutta method and a fourth order splitting method. Numerical results for the discretizations are promising.  The challenge is that the discretizations are trickier to analyze than the exact shifted ODE method.

HMC and ULD are indeed analogous stochastic processes, and in general, one might hope that results for ULD extend to uHMC. Elaborating on this analogy, Monmarch\'{e} considers a parameterized family of unadjusted algorithms that include as special cases uHMC and ULD based algorithms, and permits a simultaneous and unified analysis and comparison \cite{monmarche2022united}. In the strongly convex and rough potential setting, dimension-free lower bounds on the convergence rates of the algorithms are provided. Specializing to the case where $U$ is also Hessian-Lipschitz, and focusing only on the dimension $d$ and accuracy $\eps$ dependence, Monmarch\'{e} finds that both the ULD and uHMC  can produce an $\eps$-accurate approximation of the target in $O(d^{1/2} \eps^{-1/2} \log (d/\eps)^+)$ gradient evaluations.  Moreover, in the Gaussian case,  Monmarch\'{e} notes the rate for ULD and uHMC with partial momentum refreshment improves from $K/L$ to $(K/L)^{1/2}$ dependence.

Considering the strength of the connection between uHMC and ULD highlighted above, and the improved complexity guarantee \eqref{rmm_complexity} provided by the use of randomized integrators for ULD, it is natural to ask: \emph{Is it possible to construct a randomized time integrator for uHMC that confers a better complexity guarantee in the rough target density case?} Since Hamiltonian dynamics does not explicitly incorporate friction or diffusion like ULD does \cite{MaStHi2002,BoSa2017}, it is not obvious that this strategy yields a better complexity guarantee than uHMC with Verlet. At a technical level, understanding the contractivity and asymptotic bias of the uHMC algorithm requires developing new mathematical arguments to resolve the improvement due to time integrator randomization.  This paper answers the above question in the affirmative by suggesting a simple randomized time integrator for uHMC and directly analyzing the contractivity and asymptotic bias of the resulting MCMC algorithm in $\W^2$ distance.

\subsection{Short Summary of Main Results} We now outline our main contributions.  As above, we consider a target distribution $\mu( dx) \propto e^{-U(x)} dx$ where $U: \mathbb{R}^d \to \mathbb{R}_{\ge 0}$ is continuously differentiable, $K$-strongly convex and $L$-gradient Lipschitz. Let  $(q_t(x,v),p_t(x,v))$ denote the exact flow of the Hamiltonian dynamics
\[\frac{d}{dt} {q}_t  \,  =\,     {p}_t \;, \quad  \frac{d}{dt} {p}_t \,  =\, -\nabla U({q}_t) \;, \quad (q_0, v_0) = (x,v) \;. 
\]
The randomized time integrator we suggest for the Hamiltonian flow is a stratified Monte Carlo (sMC) time integrator.  Let $h>0$ be a time step size and $\{ t_k := k h \}_{k \in \mathbb{N}_0}$ be an evenly spaced time grid.  This grid partitions time into subintervals $\{[t_k,t_{k+1})\}_{k \in \mathbb{N}_0}$ termed `strata'.  Let $( \mathcal{U}_{i} )_{i \in \mathbb{N}_0}$ be a sequence of independent random variables such that $ \mathcal{U}_{i} \sim \text{Uniform}(t_i,t_{i+1})$. One step of the sMC time integrator from $t_i$ to $t_{i+1}$ is given by
\begin{equation} \label{intro:schemeeexplicit}
	\Tilde{Q}_{t_{i+1}}  \, = \, \Tilde{Q}_{t_i} + h \tilde{P}_{t_i} + \frac{1}{2} h^2 \Tilde{F}_{t_i} \;, \quad  \Tilde{V}_{t_{i+1}} = \tilde{P}_{t_i} + h \tilde{F}_{t_i} \;,  \quad  (\tilde{Q}_0, \tilde{V}_0) = (x,v) \;,
\end{equation}
where $\Tilde{F}_{t_i}  = -\nabla U (\Tilde{Q}_{t_i} + (\mathcal{U}_i - t_i) \tilde{P}_{t_i})$ is the potential force evaluated at the `random point'  $\Tilde{Q}_{t_i} + (\mathcal{U}_i - t_i) \tilde{P}_{t_i}$.  

One intuition behind this scheme is as follows. Like Verlet integration, the sMC integrator updates the position variable on the $i$-th stratum $[t_i, t_{i+1})$ by a constant force $\Tilde{F}_{t_i}$. However, unlike Verlet integration, the update rule involves the force evaluated at a random temporal point $\mathcal{U}_i$ sampled from the $i$-th stratum, rather than from the left endpoint $t_i$ of the $i$-th stratum $[t_i, t_{i+1})$.  Moreover, unlike Verlet integration, the sMC integrator updates the velocity variable on the $i$-th stratum $[t_i, t_{i+1})$ by the same constant force $\Tilde{F}_{t_i}$.  Thus, this scheme uses only one new gradient evaluation per sMC integration step.  This scheme is probably the simplest randomized time integrator for the Hamiltonian dynamics, but it certainly is not the only strategy. For example, one could approximate the force over the $i$-th stratum as $-\nabla U(q_{\mathcal{U}_i} (\tilde{Q}_{t_i}, \tilde{P}_{t_i}))$.  However, as the true dynamics are unknown, this is not implementable. Choosing instead to first approximate $q_{\mathcal{U}_i} (\tilde{Q}_{t_i}, \tilde{P}_{t_i})$ using the forward Euler method, and then using the force at the resulting `random point' to approximate the dynamics for both position and velocity over the $i$-th stratum $[t_i, t_{i+1})$ results in the sMC method described above.  Replacing Verlet integration in this way, we obtain the uHMC algorithm with complete momentum refreshment described in Algorithm \ref{intro:alg1}.



\begin{algorithm}[h] \label{intro:alg1}
 \SetKwInput{Input}{Input}
 \SetKwInput{Output}{Output}
\Input{Current state of the chain $X_0 \in \mathbb{R}^d$, duration $T>0$ and time step size  $h>0$.}
\caption{uHMC Transition Step with sMC Time Integration}
Sample $\xi$ from $\mathcal{N}(0, I_d)$; \\
Initialize sMC time integrator $\tilde{Q}_0 = X_0$; $\tilde{V}_0=\xi$; $t_0=0$; $n = \lfloor T/h \rfloor$;   \\
\For{$i=0$ \KwTo $i = n-1$} {
Update time $t_{i+1} = t_i + h$; \\
Sample $\mathcal{U}_{i}$ uniformly from $(t_i, t_{i+1})$; \\
Evaluate potential force $\Tilde{F}_{t_i} = -\nabla U (\Tilde{Q}_{t_i} + (\mathcal{U}_i - t_i) \tilde{P}_{t_i})$; \\
Update position $\Tilde{Q}_{t_{i+1}} \, = \,  \Tilde{Q}_{t_i} + h \tilde{P}_{t_i} + \frac{1}{2} h^2 \Tilde{F}_{t_i}$; \\
Update velocity $\Tilde{V}_{t_{i+1}} \, = \, \tilde{P}_{t_i} + h \tilde{F}_{t_i}$; 
}
\Output{Next state of the chain $X_1= \tilde{Q}_{t_n}$.}
\end{algorithm}

The main result of this paper states that uHMC with sMC time integration can produce an $\eps$-accurate approximation of the target distribution using
\begin{equation} \label{smc_complexity}
O\left( \left( \frac{d}{K} \right)^{1/3} \left( \frac{L}{K} \right)^{5/3} \eps^{-2/3} \log \left( \frac{\W^2(\mu,\nu)}{\eps} \right)^+ \right) \quad \text{gradient evaluations}
\end{equation} 
when initialized from an arbitrary distribution $\nu$ with finite second moment and run with the  hyperparameters specified below in \eqref{eq:tuning}. The proof of this complexity guarantee follows from two theorems, which we briefly outline.

Let $\tilde{\pi}$ denote the one-step transition kernel of  uHMC with sMC time integration.  
Firstly, assuming that $L T^2 \le 1/8$ and $h \le T$, Theorem \ref{thm:contractivity} uses a synchronous coupling to prove $\W^2$-contractivity of $\tilde{\pi}$ 
\begin{equation} \label{intro:w2_contr}
    \W^2(\nu \tilde{\pi}, \eta \tilde{\pi}) \, \le \, e^{- c} \, \W^2(\nu, \eta) \quad \text{with $c = K T^2 /6$} \;, \end{equation}
where $\nu, \eta$ are arbitrary probability measures on $\mathbb{R}^d$ with finite second moment.  The $\W^2$-contraction coefficient $e^{-c}$ is uniform in the time step size.  The proof of Theorem \ref{thm:contractivity} relies on Lemma~\ref{lem:contr_SMC}, which states almost sure contractivity of two copies of the sMC time integrator starting with the same initial velocities and with synchronized random temporal sample points. The proof of Lemma~\ref{lem:contr_SMC} crucially relies on $K$-strong convexity of $U$ and the co-coercivity property of $\nabla U$; see Remark~\ref{rmk:cocoercivity} for details on the latter. 
The proof involves a careful balance of these competing effects at the random points where the force is evaluated.
As a corollary,  $\tilde{\pi}$ admits a unique invariant measure $\tilde{\mu}$, but in general, due to time discretization error $\tilde{\mu} \ne \mu$.  

Secondly, we upper bound the $\W^2$-asymptotic bias of $\tilde{\pi}$, which quantifies  $\W^2(\mu, \tilde{\mu})$.  To this end, let $\pi$ denote the transition kernel of exact HMC, which uses the exact Hamiltonian flow per transition step and satisfies $\mu \pi = \mu$.  Theorem \ref{thm:asymptotic_bias}  uses a synchronous coupling of $\tilde{\pi}$ and $\pi$ to prove: for $LT^2 \le 1/8$ and $h \le T$,
\begin{equation} \label{intro:w2_asymp_bias}
\W^2(\mu, \tilde{\mu}) \,  \le \, 142 \,   d^{1/2} \, c^{-1} \, \left( L/K \right)^{1/2}   \,L^{1/4} \, h^{3/2} \;. 
\end{equation}
Remarkably, this upper bound only requires the assumption that $U$ is $K$-strongly convex and $L$-gradient Lipschitz. The proof of Theorem \ref{thm:asymptotic_bias} rests on the proof of $L^2$-accuracy of the sMC integration scheme given in Lemma~\ref{lem:smc_strong_accuracy}.


To obtain the stated complexity guarantee, the hyperparameters are tuned as follows: 
\begin{equation}  \label{eq:tuning} \scriptsize
\begin{aligned}
&\text{duration $T \,\propto  \, L^{-1/2}$,}~~ \text{time step $ h \, \propto \, \left( \eps d^{-1/2} c \left(\frac{L}{K} \right)^{-1/2} L^{-1/4} \right)^{2/3} $,~~and~steps $ m  \, \propto \, c^{-1} \log\left( \frac{\W^2(\mu, \nu)}{\eps}\right)^+$}.
\end{aligned}
\end{equation}
For clarity, numerical prefactors are suppressed here, but are fully worked out in Theorem \ref{thm:uHMC_complexity} and Remark \ref{rmk:complexity}.
With this choice of hyperparameters, the $\W^2$-contraction rate in \eqref{intro:w2_contr} reduces to $c \propto K/L$, and we find that
\begin{align*} 
\W^2(\nu \tilde{\pi}^m, \mu) &\le  \W^2( \nu \tilde{\pi}^m, \tilde{\mu}) + \W^2( \tilde{\mu}, \mu) \\
& \overset{\eqref{intro:w2_contr}}{\le}\, e^{-cm} \W^2 (\nu, \tilde{\mu}) + \W^2( \tilde{\mu}, \mu) \, \le  \,e^{-cm} \W^2 (\nu, \mu) + 2\W^2( \tilde{\mu}, \mu)  \overset{\eqref{intro:w2_asymp_bias}}{\le} \eps \;.
\end{align*}
Since the total number of gradient evaluations is $m \times T/h$,  we obtain the complexity guarantee given in \eqref{smc_complexity}.

\subsection{Organization of the Paper}

The rest of the paper is organized as follows.  Section~\ref{sec:uHMC} contains a definition of the new uHMC algorithm with sMC time integration (Definition~\ref{def:uHMC_sMC}); the assumptions on $U$ (Assumption~\ref{A1234}); a theorem on $L^2$-Wasserstein Contractivity (Theorem~\ref{thm:contractivity}); a theorem on $L^2$-Wasserstein Asymptotic Bias (Theorem~\ref{thm:asymptotic_bias}); and a $L^2$-Wasserstein Complexity Guarantee (Theorem~\ref{thm:uHMC_complexity}).  Section~\ref{sec:proofs} contains detailed proofs.
An Appendix is included on: (i) `adjustable' randomized time integrators which can be directly incorporated into the proposal of Metropolis-adjusted HMC (see Appendix~\ref{sec:aHMC}); and (ii) duration randomization which  has a similar effect as partial momentum refreshment (see Appendix~\ref{sec:dr_uHMC}).

\smallskip

We conclude this introduction by remarking that randomized time integration might also be useful in conjunction with either time step or duration adaptivity to  deal with multiscale features in the target distribution --- as in \cite{kleppe2022, hoffman2022tuning,HoGe2014}.

\section*{Acknowledgements}
We wish to acknowledge Persi Diaconis for his support and encouragement of this collaboration. We also acknowledge
Bob Carpenter, 
Tore Kleppe, Pierre Monmarch\'{e},  Stefan Oberd\"{o}rster, and Lihan Wang for  feedback on a previous version of this paper. N.~Bou-Rabee has been supported by the National Science Foundation under Grant No.~DMS-2111224. M. Marsden also acknowledges his advisors Persi Diaconis and Lexing Ying for their support.

\section{uHMC with sMC time integration}

\label{sec:uHMC}

\subsection{Notation}

Let $\PM(\mathbb{R}^d)$ denote the set of all probability measures on $\mathbb{R}^d$, and denote by  $\PM^p(\mathbb{R}^d)$ the subset of probability measures on $\mathbb{R}^d$ with finite $p$-th moment.    Denote the set of all couplings of $\nu, \eta \in \PM(\mathbb{R}^d)$ by $\J(\nu,\eta)$. 
For $\nu,\eta \in \PM^p(\mathbb{R}^d)$,
define the $L^p$-Wasserstein distance by
\[
\W^p(\nu,\eta) \ := \ \left( \inf \Big\{ E\left[| X - Y |^p  \right] ~:~ \law(X, Y) \in \J(\nu,\eta) \Big\} \right)^{1/p} \;.
\] 

\subsection{Definition of the uHMC Algorithm with sMC time integration}

 Unadjusted Hamiltonian Monte Carlo (uHMC) is an MCMC method for approximate sampling from a `target' probability distribution on $\mathbb{R}^d$ of the form
\begin{equation}\label{eq:mu}
\mu (dx)= \mathcal Z^{-1}\,\exp (-U(x))\, dx \;, \quad \text{$\mathcal Z=
\int_{\mathbb{R}^d} \exp (-U(x))\,dx$}  \;,
\end{equation}
where $U: \mathbb{R}^{d} \to \mathbb{R}_{\ge 0}$ is assumed to be
a continuously differentiable function
such that $\mathcal Z<\infty$.  The function $U$ is termed  `potential energy' and $-\nabla U$ is termed `potential force' since it is a force derivable from a potential.

First we recall the standard uHMC algorithm with Verlet time integration and complete momentum refreshment. The standard algorithm generates a Markov chain on $\mathbb{R}^d$ using: (i) a  Verlet time integrator for the Hamiltonian flow corresponding to the unit mass Hamiltonian $H(x,v) = (1/2) |v|^2 + U(x)$; and (ii) an i.i.d.~sequence of random initial velocities $(\xi^k)_{k \in \mathbb{N}_0} \overset{i.i.d.}{\sim} \mathcal{N}(0,I_d)$.  We want to highlight that there are exactly two hyper-parameters that need to be specified in this algorithm: the duration $T>0$ of the Hamiltonian flow and the time step size $h \ge 0$; and for simplicity of notation, we often assume $T/h \in \mathbb{Z}$ when $h>0$, which implies that $h \le T$.  Let $\{ t_i := i h \}_{i \in \mathbb{N}_0}$ be an evenly spaced time grid.  This grid partitions time into subintervals $\{[t_i,t_{i+1})\}_{i \in \mathbb{N}_0}$ termed `strata'.  In the $(k+1)$-th uHMC transition step, a Verlet time integration is performed with initial position given by the $k$-th step of the chain and initial velocity given by $\xi^k$. The  $(k+1)$-th state of the chain is then the final position computed by Verlet. Recall that Verlet approximates the Hamiltonian flow using: (i) a piecewise quadratic approximation of positions which can be interpolated by a quadratic function of time on each stratum $[t_i, t_{i+1})$; and (ii) a deterministic trapezoidal quadrature rule of the time integral of the potential force over each stratum $[t_i, t_{i+1})$ to update the velocities.

However, since in uHMC we are almost exclusively interested in a stochastic notion of accuracy of the numerical time integration \cite[Theorem 3.6]{BouRabeeSchuh2023}, it is quite natural to instead use a randomized time integrator for the Hamiltonian flow.  This paper suggests one such randomized integration strategy.  
The basic idea is to replace the trapezoidal quadrature rule used by Verlet in each stratum with Monte Carlo.  This construction will substantially relax the regularity requirements on the target density.  The resulting integration scheme is an instance of a stratified Monte Carlo (sMC) time integrator. To precisely define this variant of uHMC, in addition to the i.i.d.~sequence of random initial velocities $(\xi^k)_{k \in \mathbb{N}_0}$, we define an independent sequence of sequences $(\mathcal{U}^k_i)_{i,k  \in \mathbb{N}_0}$ whose terms are independent uniform random variables $\mathcal{U}^k_i \sim \operatorname{Uniform}(t_i,t_{i+1})$.  A key ingredient in the algorithm is the \emph{sMC flow} $(\tilde{Q}_t^k(x,v), \tilde{P}_t^k(x,v))$ from $(x,v) \in \mathbb{R}^{2 d}$ defined by 
\begin{equation}
\label{sMC}
	\frac{d}{dt} \tilde{Q}_t^k   \, = \,   \tilde{P}_{t}^k , \qquad   \frac{d}{dt} \tilde{P}_t^k \, = \, - \nabla
	U(\tilde{Q}_{\lb{t}}^k + (\tau_t^k - \lb{t}) \tilde{P}_{\lb{t}}^k )
\end{equation}
where we introduced the floor (resp.~ceiling) function to the  nearest time grid point less (resp.~greater) than time $t$, i.e., \begin{equation}
\label{eq:round}
\lb{t}= \max\{ s\in h\Z \ : \ s\leq t \} \;, \quad \ub{t}= \min\{ s\in
h\Z \ : \ s\geq t \} \quad \text{ for $h>0$} \;,
\end{equation}
and the random temporal point in the $i$-th stratum
\[
\tau_t^k =  \mathcal{U}_i^k \qquad \text{for $t \in [t_i, t_{i+1})$} \;,
\] as illustrated below.   
 \begin{center}
 \begin{tikzpicture}[scale=1.5]
\draw[-, thick](0,0.0) -- (8,0.0);
\node[black,scale=1.] at (2,-0.35) {$t_{i-1}$};
\node[black,scale=1.] at (4,-0.35) {$t_{i}$};
\node[black,scale=1.] at (6,-0.35) {$t_{i+1}$};
\node[black,scale=1.] at (2.7,0.35) {$\mathcal{U}_{i-1}^k$};
\node[black,scale=1.] at (5.25,0.35) {$\mathcal{U}_i^k$};
\node[black, scale=1.5,fill=white] at (7.75,0.0) {$\dotsc$};
\node[black, scale=1.5,fill=white] at (0.25,0.0) {$\dotsc$};
\foreach \x in {2.,4.,6.} 
{
\filldraw[color=black,fill=black] (\x,0) circle (0.12); 
};
\filldraw[color=black,fill=white] (2.7,0) circle (0.12); 
\filldraw[color=black,fill=white] (5.25,0) circle (0.12); 
\end{tikzpicture}
\end{center}

\begin{minipage}{\textwidth}
\begin{minipage}{0.54\textwidth}
\noindent
By integrating \eqref{sMC}, note that $\tilde{Q}_t^k$ is a piecewise quadratic function of time that interpolates between the positions $\{  \tilde{Q}_{t_i}^k \}$ and satisfies
$\frac{d}{dt} \tilde{Q}_t^k  = \tilde{P}_{t}^k$ and $\left. \frac{d^2}{dt^2} \tilde{Q}_t^k  \right|_{t=t_i+} = - \nabla
	U(\tilde{Q}_{t_i}^k + (\mathcal{U}_i^k - t_i) \tilde{P}_{t_i}^k )$, while $\tilde{P}_t^k$ is a piecewise linear function of time that interpolates between the velocities  $\{  \tilde{P}_{t_i}^k  \}$.

\end{minipage}
\begin{minipage}{0.4\textwidth}
\begin{center}
\begin{tabular}{c}
\includegraphics[width=\textwidth]{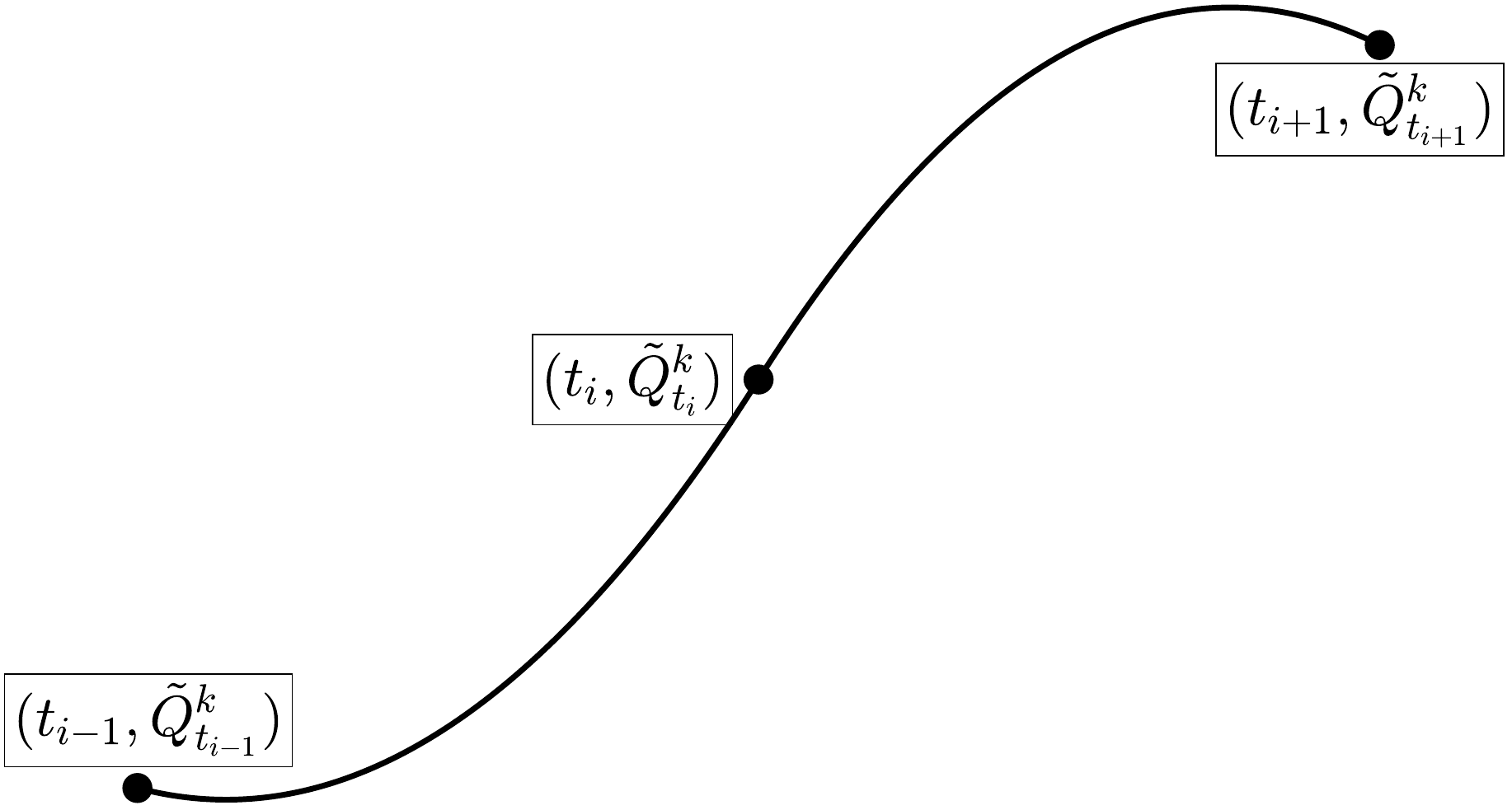} \\
{\em piecewise quadratic interpolation of positions}
\end{tabular}
\end{center}
\end{minipage}
\end{minipage}

\smallskip

For $h=0$, we set $\lb{t}=\ub{t}=t$, drop the tildes in the notation, and since the corresponding flow is deterministic, we use lower case letters  to denote the \emph{exact flow} $({q}_t(x,v), {v}_t(x,v))$ which satisfies
\begin{equation}
    \label{eq:Hamflow}
     \frac{d}{dt} {q}_t  \,  =\,     {p}_t ,\qquad  \frac{d}{dt} {p}_t \,  =\, - \nabla
	U({q}_t) \;.
\end{equation}
On the time grid, the sMC flow is an unbiased estimator of the \emph{semi-exact flow} $(\bar{q}_t(x,v),\bar{p}_t(x,v))$ which satisfies
\begin{equation}
\label{semi-exact}
	\frac{d}{dt} \bar{q}_t   \, = \,  \bar{p}_t , \qquad   \frac{d}{dt} \bar{p}_t \, = \, - \frac{1}{h} \int_{\lb{t}}^{\lb{t}+h}  \nabla
	U(\bar{q}_{\lb{t}} + (s-\lb{t}) \bar{p}_{\lb{t}} ) ds \;. 
\end{equation}
 The semi-exact flow plays a key role in \S\ref{sec:asymptotic_bias} to quantify the $L^2$-accuracy of the sMC flow, and is itself somewhat related to the Average Vector Field method \cite{quispel2008new}.

\medskip 

With this notation, the chain $(\tilde X^k)_{k \in \mathbb{N}_0}$ corresponding to uHMC with sMC time integration is defined as follows.

\begin{definition}[\textbf{uHMC with sMC time integration}]
Given an initial state $x \in \mathbb{R}^d$, a duration hyperparameter $T>0$, and time step size hyperparameter $h \ge 0$ with $T/h \in \mathbb{Z}$ for $h > 0$, define $\tilde{X}^0(x) := x$ and \[
\tilde{X}^{k+1}(x) := \tilde{Q}_T^{k}( \tilde{X}^{k}(x), \xi^{k} ) \quad \text{for $k \in \mathbb{N}_0$} \;.
\] 
Let $\tilde{\pi}(x,A) = P[ \tilde{X}^1(x) \in A ]$ denote the corresponding one-step transition kernel.  
\label{def:uHMC_sMC}
\end{definition}

For $h=0$, we recover \emph{exact HMC}. 
In this case, we drop all tildes in the notation, i.e., the $k$-th transition step is denoted by $X^k(x)$, and the corresponding transition kernel is denoted by $\pi$. The target measure $\mu$ is invariant under $\pi$, because the exact flow  preserves the Boltzmann-Gibbs probability measure on $\mathbb R^{2d}$ with density proportional to $\exp (-H(x,v))$, and $\mu$ is the first marginal of this measure. When $h>0$, and under certain conditions (detailed below), $\tilde{\pi}$ has a unique invariant probability measure denoted by $\tilde{\mu}$, which typically approaches $\mu$ as $h \searrow 0$.    
In the sequel, and for the sake of brevity, uHMC refers to  \emph{uHMC  with sMC time integration}.  

\smallskip

In Appendix~\ref{sec:aHMC}, we detail a novel adjustable HMC algorithm that employs randomized time integration to approximate the exact Hamiltonian flow.  Extending the sMC time integrator to be Metropolis-adjustable (see Definition~\ref{defn:adjustable}) is a challenging task.   Our approach involves constructing a randomized time integrator by randomly selecting from a parametric family of time integrators, each of which is reversible and volume-preserving.     Remarkably the accept/reject rule for the adjustable randomized time integrator, similar to that in standard HMC, depends solely on the change in energy incurred along a composition of these randomly selected reversible and volume-preserving integrators.  This result is particularly surprising given that the composition of reversible integrators is not necessarily reversible, whereas standard HMC requires a reversible proposal move (see, e.g., Section 5.3 of \cite{BoSaActaN2018}).   To clarify this surprising result, we provide a detailed but concise explanation in Appendix~\ref{sec:aHMC}.

\subsection{Assumptions}

To prove our main results, we assume the following.

\begin{assumption} 
\label{A1234} The potential energy function $U: \mathbb{R}^d \to \mathbb{R}$ is continuously differentiable and satisfies:
\begin{enumerate}[label=\textbf{A.\arabic*}]
\item $U$ has a global minimum at $0$ and $U(0)=0$. \label{A1}
\item $U$ is $L$-gradient Lipschitz continuous, i.e., there exists $L >0$ such that \[
|\nabla U(x) - \nabla U(y)| \ \le \ L \, |x-y| \quad \text{for all $x,y \in \mathbb{R}^d$.} \] \label{A2}
\vspace{-0.25cm}
\item $U$ is $K$-strongly convex, i.e., there exists $K>0$ such that \[
( \nabla U(x) - \nabla U(y) ) \cdot (x - y) \ \ge \ K | x - y |^2 \quad \text{for all $x,y \in \mathbb{R}^d$.}
\] \label{A3}
\end{enumerate}
\end{assumption}

Assumptions \ref{A1}-\ref{A3} imply $\W^2$-contractivity of the transition kernel of uHMC; see Theorem~\ref{thm:contractivity} below.  By the Banach fixed point theorem, contractivity implies existence of a unique invariant probability measure of $\tilde{\pi}$ \cite[Theorem 2.9]{BouRabeeEberleLectureNotes2020}. The $\W^2$-asymptotic bias of this invariant measure is upper bounded in Theorem~\ref{thm:asymptotic_bias}. 

\begin{remark} \label{rmk:remark1}
Under \ref{A1}-\ref{A3}, using a  quadratic Foster-Lyapunov function argument\footnote{See Proposition 1(ii) of \cite{durmus2019high}.}, it can be shown that the target distribution satisfies \[
\int_{\mathbb{R}^d}  |x| \mu(dx) \ \le \ \left( \int_{\mathbb{R}^d}  |x|^2 \mu(dx) \right)^{1/2}  \ \le \  \left( \frac{d}{K} \right)^{1/2} 
\] where in the first step we used Jensen's inequality. The bound is sharp in the sense that it is attained by a centered Gaussian random variable $\xi$ with $E | \xi |^2 = d/K$.
\end{remark}

\begin{remark} \label{rmk:cocoercivity}
If $U$ is continuously differentiable, convex, and $L$-gradient Lipschitz, then $\nabla U$ satisfies the following `co-coercivity' property \begin{equation} \label{eq:cocoercive}
|\nabla U(x) - \nabla U(y)|^2 \ \le \ L \, (\nabla U(x) - \nabla U(y)) \cdot (x- y) \;, \quad \text{for all $x, y \in \mathbb{R}^d$} \;. 
\end{equation}
This  property  plays a crucial role in proving a sharp $\W^2$-contraction coefficient for the uHMC transition kernel in the globally strongly convex setting.
\end{remark}

\subsection{$L^2$-Wasserstein Contractivity}

Let $(\tilde{Q}_t(x,v), \tilde{P}_t(x,v))$ be a single realization of the sMC flow satisfying \eqref{sMC} from the initial condition $(x,v) \in \mathbb{R}^{2d}$ with a random sequence of independent temporal sample points $(\mathcal{U}_i )_{i \in \mathbb{N}_0}$ such that $\mathcal{U}_i \sim \operatorname{Uniform}(t_i,t_{i+1})$.
When $U$ is $K$-strongly convex and $L$-gradient Lipschitz, the exact flow from different initial positions but synchronous initial velocities is itself contractive if $LT^2 \le 1/4$ \cite[Lemma 6]{chen2019optimal}.\footnote{ Under $LT^2 \le \min(1/4, K/L)$, Mangoubi and Smith first obtained a similar result \cite{mangoubi2017rapid}.}  Analogously, if $LT^2 \le 1/8$ and the time step size additionally satisfies $h \le T$, the following lemma states that $|\tilde{Q}_T(x,v)-\tilde{Q}_T(y,v)|^2$ is almost surely contractive.  

\begin{lemma}[Almost Sure Contractivity of sMC Time Integrator] \label{lem:contr_SMC}
Suppose that \ref{A1}-\ref{A3} hold.  Let $T>0$   satisfy: \begin{align}
 L T^2 & \, \le \,  1/8 \;, \label{eq:CT} \end{align}
and $h \ge 0$ satisfy $T/h \in \mathbb{Z}$ if $h>0$.  Then for all $x, y, v \in \mathbb{R}^d$, \begin{equation}
|\tilde{Q}_T(x,v) - \tilde{Q}_T(y,v)|^2 \ \le \ \left( 1 - K \, T^2 \, / \, 3 \right) \ |x - y |^2 \quad \text{almost surely} \;.
\end{equation}
\end{lemma}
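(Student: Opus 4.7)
Set $Z_t := \tilde{Q}_t(x,v) - \tilde{Q}_t(y,v)$ and $W_t := \tilde{P}_t(x,v) - \tilde{P}_t(y,v)$, where both copies of the sMC flow use the \emph{same} initial velocity $v$ and the same realization of the random grid $(\mathcal{U}_i)$. Then $Z_0 = x - y$, $W_0 = 0$, and on each stratum $[t_i, t_{i+1})$ the defining equations \eqref{sMC} give $\ddot{Z}_t = -G_i$ with
$$G_i \; := \; \nabla U(a_i^x) - \nabla U(a_i^y), \qquad a_i^x - a_i^y \; = \; \Delta_i \; := \; Z_{t_i} + (\mathcal{U}_i - t_i)\, W_{t_i}.$$
Assumption \ref{A3} yields strong convexity $\Delta_i \cdot G_i \ge K |\Delta_i|^2$, and Assumption \ref{A2} combined with convexity yields the co-coercivity inequality $|G_i|^2 \le L\, \Delta_i \cdot G_i$ of Remark \ref{rmk:cocoercivity}. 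These are the two structural facts driving the contraction, and the sMC design ensures that both are applied at the \emph{random point} $a_i^\cdot$ rather than at $t_i$.

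Because $\dot{Z}_0 = W_0 = 0$ and $\ddot{Z}$ is piecewise constant in $t$, integrating twice (and using Fubini on each stratum) gives the closed-form identity
$$Z_T \; = \; Z_0 - \sum_{j=0}^{n-1} c_j\, G_j, \qquad c_j \; := \; h \big( T - t_j - \tfrac{h}{2} \big), \qquad \sum_{j=0}^{n-1} c_j \; = \; \frac{T^2}{2},$$
with $n = T/h$. Squaring,
$$|Z_T|^2 \; = \; |Z_0|^2 \; - \; 2 Z_0 \cdot \sum_j c_j G_j \; + \; \Big|\sum_j c_j G_j\Big|^2.$$
Weighted Cauchy--Schwarz and co-coercivity bound the quadratic term by
$$\Big|\sum_j c_j G_j\Big|^2 \; \le \; \Big(\sum_j c_j\Big) \sum_j c_j |G_j|^2 \; \le \; \frac{L T^2}{2}\, S, \qquad S \; := \; \sum_j c_j\, \Delta_j \cdot G_j,$$
while the linear term is handled by decomposing $Z_0 \cdot G_j = \Delta_j \cdot G_j - (\Delta_j - Z_0) \cdot G_j$ and invoking strong convexity on the principal part, so that $-2 Z_0 \cdot \sum_j c_j G_j \le -2 S + 2 \sum_j c_j (\Delta_j - Z_0) \cdot G_j$. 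Combining and using $S \ge K \sum_j c_j |\Delta_j|^2$ reduces the lemma to showing
$$\big( 2 - \tfrac12 L T^2 \big)\, S \;\; \ge \;\; \tfrac{1}{3} K T^2 |Z_0|^2 \; + \; 2 \sum_j c_j (\Delta_j - Z_0) \cdot G_j.$$

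The main obstacle is the \emph{drift control} of $\Delta_j$ about $Z_0$: if $\Delta_j \equiv Z_0$ the above would follow immediately from $LT^2 \le 1/8$, but in general one has $W_{t_j} = -h\sum_{l<j} G_l$ and $Z_{t_j} - Z_0 = -\sum_{l<j} h(t_j - t_l - h/2) G_l$. I plan to bound $|G_l| \le L |\Delta_l|$ by Assumption \ref{A2}, plug these into $|\Delta_j - Z_0| \le |Z_{t_j} - Z_0| + (\mathcal{U}_j - t_j)|W_{t_j}|$, and run a discrete Gr\"{o}nwall / bootstrap argument on $\max_{j \le i} |\Delta_j|$. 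The stability hypothesis $LT^2 \le 1/8$ is precisely what closes this loop, forcing $\max_j |\Delta_j|$ and $\max_j |\Delta_j - Z_0|/|Z_0|$ to remain bounded by explicit constants close to $1$ and close to $0$, respectively. Absorbing the resulting error $2 \sum c_j (\Delta_j - Z_0) \cdot G_j$ into the dominant negative contribution $-(2 - \tfrac12 L T^2) S \le -(2 - 1/16) K \sum c_j |\Delta_j|^2$ then leaves exactly the residual contraction rate $K T^2 / 3$. Since every estimate is pathwise in $(\mathcal{U}_i)$, the bound holds almost surely as stated.
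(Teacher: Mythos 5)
Your structural setup is genuinely different from the paper's and is a reasonable starting point: the paper never forms the global discrete Duhamel identity $Z_T = Z_0 - \sum_j c_j G_j$; instead it sets up a differential inequality for $A_t = |Z_t|^2$, applies variation of parameters with the kernel $s_{T-r}=\sin(\sqrt K(T-r))/\sqrt K$, $c_T=\cos(\sqrt K T)$, and shows that the resulting ``error'' integral $\int_0^T s_{T-r}(2|W_r|^2+\epsilon_r)\,dr$ is nonpositive. Your identity and the decomposition $Z_0\cdot G_j = \Delta_j\cdot G_j - (\Delta_j - Z_0)\cdot G_j$ are correct and conceptually parallel, and the weighted Cauchy--Schwarz step $\big|\sum_j c_j G_j\big|^2 \le \frac{LT^2}{2} S$ is fine.

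The gap is in the final absorption. You propose a discrete Gr\"onwall bootstrap that controls $\max_j|\Delta_j-Z_0|$ and $\max_j|\Delta_j|$ by constants times $|Z_0|$, using only the Lipschitz bound $|G_l|\le L|\Delta_l|$. This gives $\max_j|\Delta_j-Z_0|\lesssim LT^2\,|Z_0|$. But then the error term $E:=2\sum_j c_j(\Delta_j-Z_0)\cdot G_j$, bounded via Cauchy--Schwarz and co-coercivity $|G_j|\le\sqrt{L\rho_j}$, satisfies only $E\lesssim \sqrt{L}\,\sqrt{S}\,\big(\sum_j c_j|\Delta_j-Z_0|^2\big)^{1/2}\lesssim \sqrt{L}\,\sqrt{S}\cdot T\cdot LT^2\,|Z_0|$, while the gain you want to beat it with is $\big(2-\tfrac12 LT^2\big)S - \tfrac13 KT^2|Z_0|^2\sim KT^2|Z_0|^2$ (after inserting the lower bound $S\ge K\sum c_j|\Delta_j|^2\gtrsim KT^2|Z_0|^2$). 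Normalizing by $KT^2|Z_0|^2$, the error contributes a factor of order $\sqrt{L/K}\cdot LT^2$, which is not bounded uniformly in the condition number; for $L/K$ large the error overwhelms the gain, so the absorption does not close. If you instead use only $|G_j|\le L|\Delta_j|$, the error is $\lesssim L^2T^4|Z_0|^2$, a constant multiple of $|Z_0|^2$ with no $KT^2$ factor at all, which is even worse.

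The missing ingredient is to bound $E$ directly as a multiple of $S$ rather than of $|Z_0|^2$; this is exactly what the paper's Fubini step (its inequality \eqref{ieq:wt2}) achieves in the continuous setting. In your discrete framework you should exploit the explicit cumulative form $\Delta_j - Z_0 = -\sum_{l<j} h(\mathcal{U}_j - t_l - \tfrac h2)\,G_l$, which turns $E$ into a double sum $-2\sum_j\sum_{l<j} c_j\,h(\mathcal{U}_j - t_l - \tfrac h2)\, G_l\cdot G_j$. Pairing $G_l\cdot G_j \le \tfrac12(|G_l|^2+|G_j|^2)\le \tfrac L2(\rho_l+\rho_j)$ by AM--GM and co-coercivity, and then summing the nonnegative kernel $c_j\,h(\mathcal{U}_j - t_l - \tfrac h2)$ over the free index against each $\rho$, yields $E\lesssim LT^2\,S$, which \emph{can} be absorbed alongside $\tfrac12 LT^2 S$ under the hypothesis $LT^2\le 1/8$. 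Without this co-coercivity-preserving estimate on $E$, the sketch as written does not establish the lemma.
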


The proof of Lemma~\ref{lem:contr_SMC} is deferred to Section~\ref{sec:w2contractivity}.  By synchronously coupling both the random initial velocities and the random temporal sample points in two copies of uHMC starting at different initial conditions, and applying Lemma~\ref{lem:contr_SMC}, we obtain the following.  

\begin{theorem}[$\W^2$-Contractivity of uHMC] \label{thm:contractivity}
Suppose that \ref{A1}-\ref{A3} hold. Let $T>0$ and $h \ge 0$ be such that \eqref{eq:CT} holds with $T/h \in \mathbb{Z}$ if $h>0$.  Then for any pair of probability measures $\nu, \eta \in \PM^2(\mathbb{R}^d)$, \begin{align}
\W^2(\nu \tilde{\pi}^m, \eta \tilde{\pi}^m) \, &\le \, (1 - c)^m \, \W^2(\nu, \eta) \qquad \text{where} \\
c \, & = \, K \, T^2 \, / \, 6 \;.
\end{align}
\end{theorem}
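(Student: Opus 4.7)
\bigskip

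\noindent\textbf{Proof plan for Theorem~\ref{thm:contractivity}.}
The plan is to construct a synchronous coupling of two uHMC chains and then reduce, at each step and pathwise, to the almost sure contractivity statement of Lemma~\ref{lem:contr_SMC}. Fix $\nu,\eta\in\PM^2(\mathbb{R}^d)$ and pick an optimal $\W^2$-coupling $(\tilde X^0,\tilde Y^0)$ of $(\nu,\eta)$, so that $E[|\tilde X^0-\tilde Y^0|^2]=\W^2(\nu,\eta)^2$. For each $k\in\mathbb{N}_0$, drive both chains with the \emph{same} Gaussian initial velocity $\xi^k\sim\mathcal N(0,I_d)$ and the \emph{same} sequence of stratified uniforms $(\mathcal U_i^k)_{i\ge 0}$ independent of everything earlier; set $\tilde X^{k+1}=\tilde Q_T^k(\tilde X^k,\xi^k)$ and $\tilde Y^{k+1}=\tilde Q_T^k(\tilde Y^k,\xi^k)$ using the same realization of the sMC flow.

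Conditionally on $(\tilde X^k,\tilde Y^k)$, the two sMC trajectories over $[0,T]$ begin with the same initial velocity and share the random temporal points, so Lemma~\ref{lem:contr_SMC} applies pathwise to give
\[
|\tilde X^{k+1}-\tilde Y^{k+1}|^2 \;\le\;\bigl(1-KT^2/3\bigr)\,|\tilde X^k-\tilde Y^k|^2 \quad\text{almost surely.}
\]
Iterating this bound $m$ times yields $|\tilde X^m-\tilde Y^m|^2\le(1-KT^2/3)^m|\tilde X^0-\tilde Y^0|^2$ a.s. Since $(\law(\tilde X^m),\law(\tilde Y^m))\in\J(\nu\tilde\pi^m,\eta\tilde\pi^m)$, taking expectations and using the definition of $\W^2$ gives
\[
\W^2(\nu\tilde\pi^m,\eta\tilde\pi^m)^2 \;\le\; (1-KT^2/3)^m\,\W^2(\nu,\eta)^2.
\]

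To obtain the stated form $(1-c)^m$ with $c=KT^2/6$ rather than $(1-KT^2/3)^{m/2}$, I would apply the elementary inequality $\sqrt{1-a}\le 1-a/2$ valid for $a\in[0,1]$ to $a=KT^2/3$; note $a\le 1$ is ensured by the hypothesis $LT^2\le 1/8$ together with $K\le L$. Taking square roots and then using this inequality gives $\W^2(\nu\tilde\pi^m,\eta\tilde\pi^m)\le(1-KT^2/6)^m\W^2(\nu,\eta)$, as claimed.

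There is no real obstacle beyond Lemma~\ref{lem:contr_SMC}, which is assumed; the work is entirely in the pathwise contraction of the sMC flow with synchronized velocities and uniforms. The only points that require mild care are (i) the measurability/construction of the synchronous coupling, which is standard since the randomness of uHMC factors into independent $\xi^k$ and $(\mathcal U_i^k)_i$ blocks across $k$, and (ii) achieving the optimal $\W^2$-coupling at time zero, which is standard for $\PM^2(\mathbb{R}^d)$. The sharp-looking factor $1/6$ (versus $1/3$ in the lemma) is simply the cost of converting a squared-distance contraction into a $\W^2$ contraction via $\sqrt{1-a}\le 1-a/2$.
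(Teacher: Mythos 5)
Your proof is correct and follows essentially the same route as the paper: a synchronous coupling (same $\xi^k$ and same stratified uniforms for both chains) reduces the claim to Lemma~\ref{lem:contr_SMC}, and $\sqrt{1-a}\le 1-a/2$ converts the squared-distance factor $1-KT^2/3$ into the $\W^2$-contraction coefficient $1-KT^2/6$. The only cosmetic differences are that the paper establishes the $m=1$ case by infimizing over arbitrary couplings and then iterates the resulting $\W^2$-inequality, whereas you iterate the pathwise bound $m$ times starting from an optimal coupling before taking expectations; both are valid.
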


\begin{proof}
Let $\omega$ be an arbitrary coupling of $\nu, \eta \in \PM^2(\mathbb{R}^d)$.  By the coupling characterization of the $\W^2$-distance,  \begin{align*}
\W^2(\nu \tilde{\pi}, \eta \tilde{\pi})^2 \, &\le \,  E_{(X,Y) \sim \omega, \xi \sim \mathcal{N}(0,I_d)} |\tilde{Q}_T(X,\xi) - \tilde{Q}_T(Y,\xi)|^2  \\
\, &\overset{\text{Lem.~\ref{lem:contr_SMC}}}{\le} \, \left( 1 - K \, T^2 \, / \, 3 \right) E_{(X,Y) \sim \omega}  |X - Y |^2  \;.
\end{align*}
Taking the infimum over all couplings $\omega$ and using the inequality $\sqrt{1- \mathsf{a}} \le 1 - \mathsf{a}/2$  for  $\mathsf{a} \in [0,1)$, we obtain the required result for $m=1$. By iterating this inequality,  the result extends to all $m \in \mathbb{N}$.
\end{proof}

Note that the $\W^2$-contraction coefficient in Theorem~\ref{thm:contractivity} is uniform in the time step size, and as $h \searrow 0$, recovers (up to a numerical prefactor) the sharp $\W^2$-contraction coefficient of exact HMC \cite[Lemma 6]{chen2019optimal}.

\subsection{$L^2$-Wasserstein Asymptotic Bias}
\label{sec:asymptotic_bias}

As emphasized in previous works \cite{BouRabeeSchuh2023,DurmusEberle2024}, an apt notion of accuracy of the underlying time integrator in unadjusted HMC (and other inexact MCMC methods) is a stochastic one, e.g., $L^2$-accuracy.  Remarkably, the sMC time integrator is $3/2$-order $L^2$-accurate without higher regularity assumptions such as Lipschitz continuity of the Hessian of $U$.

\begin{lemma}[$L^2$-accuracy of sMC Time Integrator]
\label{lem:smc_strong_accuracy}
Suppose that \ref{A1}-\ref{A2} hold. Let $T>0$ satisfy $L T^2 \le 1/8$,  and let $h>0$ satisfy $T/h \in \mathbb{Z}$.  Then for any $x \in \mathbb{R}^d$ and $k \in \mathbb{N}_0$ such that $t_k \le T$, \begin{equation}
   \left( E\left[ |\tilde{Q}_{t_k}(x,v) - q_{t_k}(x,v)|^2 \right] \right)^{1/2} \, \le \,    71  \, (|v| + \sqrt{L} |x| ) \, L^{1/4} h^{3/2} \;.  \label{str_err_smc}
\end{equation}
\end{lemma}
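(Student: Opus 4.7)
The plan is to interpose the semi-exact flow $(\bar q_t, \bar p_t)$ from \eqref{semi-exact} as a deterministic intermediary and apply the triangle inequality
\[
\bigl(\ev{\abs{\tilde Q_{t_k} - q_{t_k}}^2}\bigr)^{1/2} \le \bigl(\ev{\abs{\tilde Q_{t_k} - \bar q_{t_k}}^2}\bigr)^{1/2} + \abs{\bar q_{t_k} - q_{t_k}} \, .
\]
The first term is a pure Monte Carlo variance, to be handled via a martingale argument keyed on the identity $\ev{\tilde F_{t_i} \mid \sigf_{t_i}} = -\tfrac{1}{h}\int_{t_i}^{t_{i+1}}\nabla U(\tilde Q_{t_i} + (s-t_i)\tilde P_{t_i})\dd s$, which is the precise sense in which the sMC flow is an unbiased estimator of the semi-exact flow on the time grid. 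The second term is the deterministic bias introduced by replacing the true path $s \mapsto q_s$ over each stratum by the straight line $s \mapsto \bar q_{t_i} + (s-t_i)\bar p_{t_i}$.

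For the variance term, set $e_i := \tilde Q_{t_i} - \bar q_{t_i}$, $f_i := \tilde P_{t_i} - \bar p_{t_i}$, and decompose $\tilde F_{t_i} - \bar F_{t_i} = D_i + R_i$ with martingale increment $D_i := \tilde F_{t_i} - \ev{\tilde F_{t_i} \mid \sigf_{t_i}}$ and a residual $R_i$ satisfying $\abs{R_i} \le L(\abs{e_i} + h\abs{f_i})$ by $L$-gradient Lipschitz continuity. The critical observation is that $D_i$ is the centered evaluation at $\mathcal U_i \sim \mathrm{Uniform}(t_i, t_{i+1})$ of a function that is $L\abs{\tilde P_{t_i}}$-Lipschitz in its argument over an interval of length $h$, so a one-dimensional Poincar\'e-type estimate for the uniform measure yields $\ev{\abs{D_i}^2 \mid \sigf_{t_i}} \le \tfrac{h^2}{12} L^2 \abs{\tilde P_{t_i}}^2$. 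Combining this with a priori $L^2$-moment bounds $\ev{\abs{\tilde P_{t_i}}^2} \le C(\abs{v}^2 + L\abs{x}^2)$ on $[0,T]$ (obtainable from $L$-gradient Lipschitz continuity together with $LT^2 \le 1/8$), and unrolling $e_{i+1} = e_i + h f_i + \tfrac{h^2}{2}(D_i + R_i)$, $f_{i+1} = f_i + h(D_i + R_i)$ into $e_k = h^2\sum_{j=0}^{k-1}(k - \tfrac12 - j)(D_j + R_j)$ with coefficients bounded by $hT$, orthogonality of the $D_j$ in $L^2$ produces a variance bound of order $h^3 T^3 L^2(\abs{v}^2 + L\abs{x}^2)$. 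Since $T^{3/2} L = (LT^2)^{3/4} L^{1/4} \le 8^{-3/4} L^{1/4}$, taking square roots delivers the $O(L^{1/4} h^{3/2}(\abs{v} + \sqrt{L}\abs{x}))$ scaling for the martingale part, and the $R_j$-contribution is absorbed by discrete Gr\"onwall using $LT^2 \le 1/8$.

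For the bias term, a one-step expansion identifies the per-stratum defect between the semi-exact and exact flows as $\tfrac{h^2}{2}(\bar F_{t_i} - \bar F_{t_i}^{\mathrm{ex}}) + O(h^3)$, where $\bar F_{t_i}^{\mathrm{ex}} := -\tfrac{1}{h}\int_{t_i}^{t_{i+1}} \nabla U(q_s)\dd s$; $L$-gradient Lipschitz continuity combined with a priori bounds on $|q_t|$ and $|p_t|$ on $[0,T]$ converts this local defect into the same $L^{1/4} h^{3/2}(\abs{v} + \sqrt{L}\abs{x})$ scaling after the analogous discrete Gr\"onwall argument. The main obstacle is the Gr\"onwall closure for the $(e_i, f_i)$ recursion, which does not inherit the contractivity of Lemma \ref{lem:contr_SMC} (that lemma requires synchronized initial velocities, a condition failing here), so closing the loop must lean entirely on the shortness of the horizon via $LT^2 \le 1/8$; tracking numerical prefactors sharply enough to arrive at the explicit constant $71$ is the most tedious bookkeeping, but the structural argument proceeds as outlined.
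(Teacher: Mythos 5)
Your overall architecture is exactly the paper's: interpose the semi-exact flow $(\bar q_t,\bar p_t)$, triangle inequality, then prove Lemma~\ref{lem:smc_strong_accuracy_semi_exact} (sMC vs.\ semi-exact) and Lemma~\ref{lem:semi_exact_accuracy} (semi-exact vs.\ exact) separately with a discrete Gr\"onwall closure, using the a priori bounds of Lemma~\ref{lem:apriori} along the way. Where you diverge is in the internals of the variance sub-lemma: you unroll the $(e_i,f_i)$ recursion into an explicit sum, split each $\tilde F_{t_i}-\bar F_{t_i}$ into a martingale increment $D_i$ plus a residual $R_i$ bounded by $L(|e_i|+h|f_i|)$, and harvest the $h^{3/2}$ scaling from $L^2$-orthogonality of the $D_i$ together with the conditional variance estimate $E[|D_i|^2\mid\sigf_{t_i}]\le \tfrac{h^2}{12}L^2|\tilde P_{t_i}|^2$. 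The paper instead propagates a distorted $\ell_2$-metric $\rho_t^2 = E|Z_t|^2 + L^{-1/2}E\langle W_t,Z_t\rangle + L^{-1}E|W_t|^2$ through the one-step recursion and never isolates the martingale part; the independence of the $\mathcal U_i$'s enters only via the conditional-mean bound \eqref{ieq:unbiased} inserted into the cross terms $\rn{2}$. Your martingale-orthogonality framing is actually closer to the heuristic the paper states in Remark~\ref{rmk:msefundamental} than to the paper's own implementation, and it makes the origin of the $h^{3/2}$ rate somewhat more transparent; the price is that unrolling forces you to close the Gr\"onwall loop by hand for the pair $(e_k,f_k)$ simultaneously, which is exactly the bookkeeping the paper's distorted metric is engineered to absorb automatically. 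Two small imprecisions worth noting: the semi-exact-vs.-exact gap is deterministically $O(L^{1/2}h^2)$, not $O(L^{1/4}h^{3/2})$ — it becomes comparable only after invoking $L^{1/4}h^{1/2}\le 1/\sqrt 2$; and the unrolled expression $e_k = h^2\sum_{j<k}(k-\tfrac12-j)(D_j+R_j)$ is still implicit since $R_j$ depends on $(e_j,f_j)$, so the orthogonality argument only dispatches the $D$-part and the $R$-part genuinely requires the Gr\"onwall step you gesture at.
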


Note that \ref{A3} is not assumed in Lemma~\ref{str_err_smc}. The $3/2$-order of $L^2$-accuracy of the sMC time integrator is numerically verified in Figure~\ref{fig:strong_accuracy}. 

\begin{proof}[Proof of Lemma~\ref{lem:smc_strong_accuracy}] The proof of  $L^2$-accuracy of the sMC integrator is carried out in two steps. In Lemma~\ref{lem:smc_strong_accuracy_semi_exact}, we compare the sMC flow to the semi-exact flow. In Lemma~\ref{lem:semi_exact_accuracy}, we compare the semi-exact flow to the exact flow.  Using the triangle inequality and the bound $L^{1/4}h^{1/2} \leq 1/\sqrt{2}$, we obtain the required result with the numerical prefactor derived from summing the prefactors in \eqref{str_err_smc_semi_exact} and \eqref{semi_exact_accuracy} as follows: \[ \sqrt{2} e^{31/8} + 2 e^{5/8} L^{1/4} h^{1/2} \le\sqrt{2} (e^{31/8} + e^{5/8}) \le 71 \;. \] \end{proof}

\begin{remark}
For comparison, under only the assumption that $U$ is $L$-gradient Lipschitz, the order of accuracy of Verlet integration often drops from second to first order \cite[Theorem 8]{BouRabeeSchuh2023}.  This drop in accuracy is  due to the use of a trapezoidal approximation of the integral of the force $-\nabla U$, and it is well known that the trapezoidal rule typically drops an order of accuracy if the integrand does not have a bounded second derivative.  In turn, the accuracy of the integration scheme affects the asymptotic bias between the invariant measure of uHMC and the target distribution.  Thus, a smaller time step size is needed to resolve the asymptotic bias of uHMC, which increases the complexity of the algorithm.
\end{remark}

\begin{remark} \label{rmk:msefundamental}
 The $3/2$-order of $L^2$-accuracy of the sMC time integrator is reminiscent of the classical Fundamental Theorem for $L^2$-Convergence of Strong Numerical Methods for SDEs, which roughly states: if $p_1 \ge p_2 + 1/2 $ and $p_2 > 1/2$ are the order of mean and mean-square accuracy (respectively), then the $L^2$-accuracy of the method is of order $p_2 - 1/2$ \cite[Theorem 1.1.1]{Milstein2021}.  For strong numerical methods for SDEs, this $(p_2 - 1/2)$-order (as opposed to $(p_2 - 1)$-order) is due to cancellations in the $L^2$ error expansion, due to independence of the Brownian increments.   Consequently, the expectation of cross terms that involve the Brownian increments can vanish because they have zero mean. Here the cancellations (to leading order) occur because of independence of the sequence of random temporal sample points used by the sMC time integrator.  Specifically, what happens is that the expectation of the random potential force appearing in the cross terms turns into an average of the force over the  stratum, which confers higher accuracy in these cross-terms.  
  Turning this heuristic  into a rigorous proof relies on comparison to a semi-exact flow, which uses the mean force to update the position and velocity; see Lemmas~\ref{lem:smc_strong_accuracy_semi_exact} and~\ref{lem:semi_exact_accuracy} for details.
\end{remark}

\begin{figure}[ht!]
\centering
\includegraphics[width=0.49\textwidth]{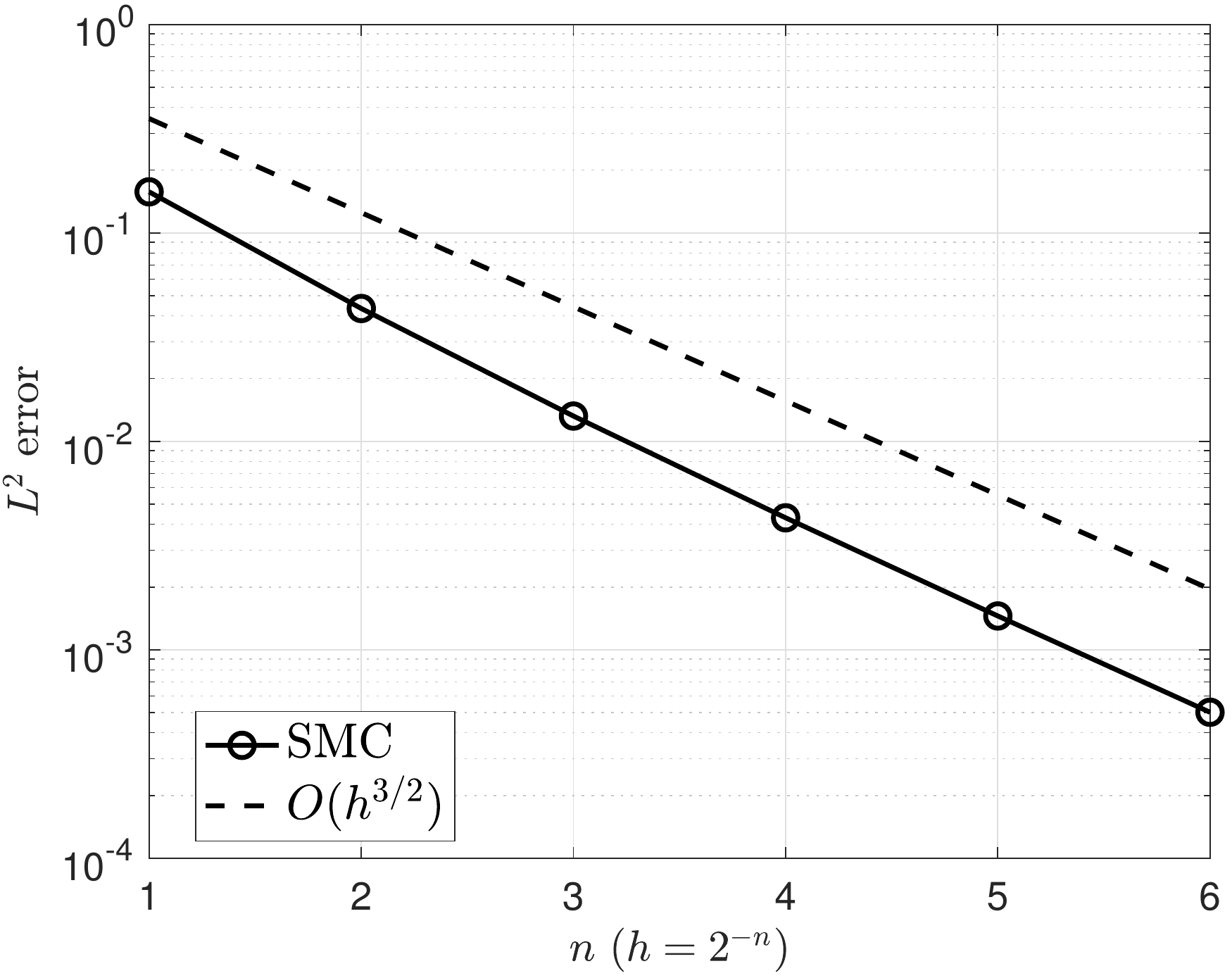}  \hfill
\includegraphics[width=0.49\textwidth]{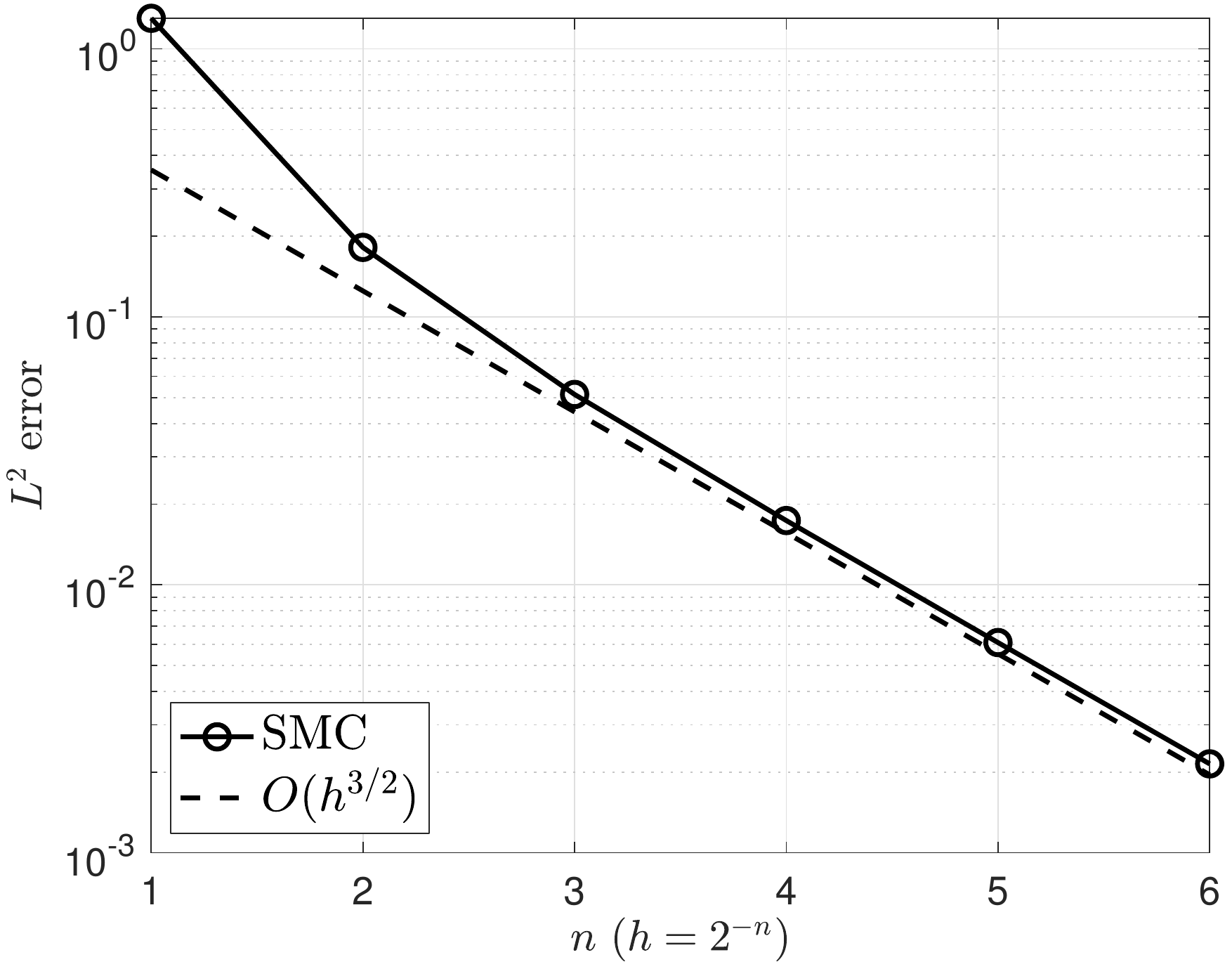}
\caption{ \small {\bf $L^2$-Accuracy Verification.} Left Image: A plot of the $L^2$-error in $(x,v)$-space of the sMC time integrator for the linear oscillator with Hamiltonian $H(x,v) = (1/2) (v^2 + x^2)$.  Right Image: A plot of the $L^2$-error in $(x,v)$-space of the sMC time integrator for a double-well system with Hamiltonian $H(x,v) = (1/2) (v^2 + (1-x^2)^2)$. Both simulations have initial condition $(2,1)$ and unit duration. The time step sizes tested are $2^{-n}$ where $n$ is given on the horizontal axis. The dashed curve is $2^{-3 n/2} = h^{3/2}$ versus $n$. }
\label{fig:strong_accuracy}
\end{figure}

Additionally, assuming \ref{A3} holds, and using a triangle inequality trick \cite[Remark 6.3]{mattingly2010convergence}, the $L^2$-accuracy bound in Lemma~\ref{lem:smc_strong_accuracy} combined with the $\W^2$-Contractivity in Theorem~\ref{thm:contractivity} can be used to bound the $\W^2$-asymptotic bias.  

\begin{theorem} [$\W^2$-Asymptotic Bias of uHMC] \label{thm:asymptotic_bias}
Suppose that \ref{A1}-\ref{A3} hold.  Let $T>0$ and $h \ge 0$ be such that \eqref{eq:CT} holds with $T/h \in \mathbb{Z}$ if $h>0$. Additionally, assume $LT^2 \le 1/8$. Then \begin{equation*}
    \W^2(\mu, \tilde{\mu}) \, \le \, 142 \, \,  d^{1/2} \, c^{-1} \, \left( L/K \right)^{1/2}   \,L^{1/4} h^{3/2}  \;.
\end{equation*}
\end{theorem}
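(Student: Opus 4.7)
The plan is to combine the one-step $L^2$-accuracy from Lemma~\ref{lem:smc_strong_accuracy} with the $\W^2$-contractivity from Theorem~\ref{thm:contractivity} via a ``Mattingly trick'' (cf. \cite{mattingly2010convergence}). The skeleton is the identity $\tilde{\mu} = \tilde{\mu}\tilde{\pi}$ and $\mu = \mu \pi$, which together with the triangle inequality yield
\[
\W^2(\tilde{\mu}, \mu) \, = \, \W^2(\tilde{\mu}\tilde{\pi}, \mu \pi) \, \le \, \W^2(\tilde{\mu}\tilde{\pi}, \mu \tilde{\pi}) + \W^2(\mu \tilde{\pi}, \mu \pi) \, \le \, (1-c) \, \W^2(\tilde{\mu}, \mu) + \W^2(\mu \tilde{\pi}, \mu \pi) \;,
\]
where the last step uses Theorem~\ref{thm:contractivity} with $m=1$. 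Rearranging gives $\W^2(\tilde{\mu}, \mu) \le c^{-1}\, \W^2(\mu \tilde{\pi}, \mu \pi)$, so the task reduces to bounding the one-step bias $\W^2(\mu \tilde{\pi}, \mu \pi)$.

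To bound $\W^2(\mu \tilde{\pi}, \mu \pi)$, I would use a synchronous coupling: sample $X \sim \mu$ and $\xi \sim \mathcal{N}(0,I_d)$ independently, and use the same $X$, $\xi$ (and independent random temporal sample points for the sMC flow) to define $X^1 := Q_T(X,\xi)$ and $\tilde{X}^1 := \tilde{Q}_T(X,\xi)$, which is a valid coupling of $\mu\pi$ and $\mu \tilde{\pi}$. By the coupling characterization of $\W^2$ together with Lemma~\ref{lem:smc_strong_accuracy} applied pointwise in $(X,\xi)$, followed by taking expectations,
\[
\W^2(\mu\tilde{\pi}, \mu \pi)^2 \, \le \, 71^2 \, L^{1/2} \, h^3 \, E\bigl[(|\xi| + \sqrt{L}\,|X|)^2\bigr] \;.
\]
Now I would use $(a+b)^2 \le 2a^2 + 2b^2$, $E|\xi|^2 = d$, the second-moment bound $E_{\mu}|X|^2 \le d/K$ from Remark~\ref{rmk:remark1}, and finally $L/K \ge 1$ to get
\[
E\bigl[(|\xi| + \sqrt{L}\,|X|)^2\bigr] \, \le \, 2d + 2L \cdot \frac{d}{K} \, = \, 2d\Bigl(1 + \frac{L}{K}\Bigr) \, \le \, 4 d \cdot \frac{L}{K} \;.
\]
Combining these gives $\W^2(\mu\tilde{\pi}, \mu\pi) \le 142 \, d^{1/2} (L/K)^{1/2} L^{1/4} h^{3/2}$ (noting $2 \cdot 71 = 142$), and substituting into the rearranged inequality yields exactly the stated bound.

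The main delicate point I expect is justifying the rearrangement step, which requires \emph{a priori} finiteness of $\W^2(\tilde{\mu}, \mu)$; this means establishing $\tilde{\mu} \in \PM^2(\mathbb{R}^d)$. This should follow from contractivity of $\tilde{\pi}$ in Theorem~\ref{thm:contractivity} together with a direct one-step second-moment bound on $\tilde{\pi}$ (e.g., a quadratic Foster--Lyapunov argument as in the footnote to Remark~\ref{rmk:remark1}), and I would include a brief verification. Beyond that the argument is routine: the $3/2$-order in $h$ comes entirely from Lemma~\ref{lem:smc_strong_accuracy}, the $c^{-1}$ factor from the Mattingly rearrangement, and the $d^{1/2}(L/K)^{1/2}$ from balancing the two parts of the moment estimate of $(|\xi| + \sqrt{L}|X|)^2$ against the target's second moment.
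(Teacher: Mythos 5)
Your proposal is correct and follows essentially the same argument as the paper: the same triangle-inequality decomposition combined with Theorem~\ref{thm:contractivity} (the ``Mattingly rearrangement''), the same synchronous coupling to reduce the one-step bias to Lemma~\ref{lem:smc_strong_accuracy}, and the same moment bound from Remark~\ref{rmk:remark1} together with $L/K \ge 1$. The one extra point you flag --- the need for \emph{a priori} finiteness of $\W^2(\tilde\mu,\mu)$ to justify the rearrangement --- is a real subtlety that the paper elides (it is guaranteed by $\tilde\mu \in \PM^2(\mathbb{R}^d)$, which follows from the Banach fixed-point argument used to construct $\tilde\mu$ within $\PM^2(\mathbb{R}^d)$), and noting it is a worthwhile clarification rather than a departure from the paper's proof.
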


\begin{proof}
By the triangle inequality,  \begin{align}
&\W^2(\mu, \tilde{\mu}) = \W^2(\mu \pi, \tilde{\mu} \tilde{\pi}) \le \W^2(\mu \pi, \mu \tilde{\pi}) + \W^2(\mu \tilde{\pi}, \tilde{\mu} \tilde{\pi}) \nonumber \\
&\quad \overset{Thm.~\ref{thm:contractivity}}{\le} \W^2(\mu \pi, \mu \tilde{\pi}) + (1-c) \W^2(\mu, \tilde{\mu})  \implies \W^2(\mu, \tilde{\mu}) \le c^{-1} \W^2(\mu \pi, \mu \tilde{\pi})  \;. \label{eq::w2_asymp_bias}
\end{align} 
 Employing now Lemma \ref{lem:smc_strong_accuracy}, Remark \ref{rmk:remark1}, and $L/K \geq 1$ gives \begin{align*}
\W^2(\mu \pi, \mu \tilde{\pi})^2 &\overset{\text{Lem.~\ref{lem:smc_strong_accuracy}}}{\le} E_{X \sim \mu, \xi \sim \mathcal{N}(0, I_d)} \left[ |\tilde{Q}_{t_k}(X,\xi) - q_{t_k}(X,\xi)|^2 \right]  \\ &\overset{\text{Rmk.~\ref{rmk:remark1}}}{\le} 2 \cdot 71^2 d ( 1 + \frac{L}{K}) L^{1/2} h^3 \, \le \,  4 \cdot 71^2  d \frac{L}{K} L^{1/2} h^3 \;.
 \end{align*}
Inserting this result back into \eqref{eq::w2_asymp_bias} gives the required  bound.
\end{proof}


\subsection{$L^2$-Wasserstein Complexity}

In sum, Theorem~\ref{thm:contractivity} implies, for any $\nu \in \PM^2(\mathbb{R}^d)$, $\W^2(\tilde{\mu}, \nu \tilde{\pi}^m) \le e^{-c m} \W^2(\tilde{\mu}, \nu)$;  and Theorem~\ref{thm:asymptotic_bias} implies $\W^2(\mu,\tilde{\mu}) \le O(h^{3/2})$.  Together these two results imply $\W^2(\mu, \nu \tilde{\pi}^m)$ can be made arbitrarily small by choosing $h$ sufficiently small and $m$ sufficiently large. This complexity argument is quite standard for inexact MCMC methods \cite{DurmusEberle2024,BouRabeeSchuh2023}, but we briefly summarize it here for completeness.  

\begin{theorem}[$\W^2$-Complexity of uHMC] \label{thm:uHMC_complexity}
Suppose that \ref{A1}-\ref{A3} hold.  Let $T>0$ and $h \ge 0$ be such that \eqref{eq:CT}  holds with $T/h \in \mathbb{Z}$ if $h>0$.  Let $\nu \in \PM^2(\mathbb{R}^d)$.  Define $\Delta(m):=\W^2(\mu, \nu \tilde{\pi}^m)$, i.e., the $L^2$-Wasserstein distance to the target measure $\mu$ after $m$ steps of uHMC initalized at $\nu$.  For any $\varepsilon>0$, suppose that $h\ge 0$ and $m \in \mathbb{N}$ satisfy \[
m \, \ge \, m^{\star} := c^{-1} \log( 2 \W^2(\mu, \nu) / \varepsilon)^+ \;,  \quad h \,\le \, h^{\star} := e^{-4} \left(\frac{c \, (\varepsilon/2)}{d^{1/2} (L/K)^{1/2} L^{1/4} } \right)^{2/3} \;. 
\]  Then $\Delta(m) \le \varepsilon$.  
\end{theorem}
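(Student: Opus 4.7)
}

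My plan is to decompose the error via the triangle inequality, then bound one piece by the asymptotic-bias result of Theorem \ref{thm:asymptotic_bias} and the other by the contractivity result of Theorem \ref{thm:contractivity}, exactly as sketched just after \eqref{eq:tuning} in the introduction. Specifically, inserting the invariant measure $\tilde\mu$ of $\tilde\pi$ yields
\begin{equation*}
\Delta(m) \;=\; \W^2(\mu,\nu\tilde\pi^m) \;\le\; \W^2(\nu\tilde\pi^m,\tilde\mu) + \W^2(\tilde\mu,\mu) \;\le\; (1-c)^m\,\W^2(\nu,\tilde\mu) + \W^2(\tilde\mu,\mu),
\end{equation*}
where in the last step I apply Theorem \ref{thm:contractivity} with $\eta=\tilde\mu$ together with $\tilde\mu\tilde\pi^m=\tilde\mu$. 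A second triangle inequality on $\W^2(\nu,\tilde\mu)$, the elementary bound $(1-c)^m\le e^{-cm}\le 1$, and absorbing the cross-term give the clean estimate
\begin{equation*}
\Delta(m) \;\le\; e^{-cm}\,\W^2(\nu,\mu) \;+\; 2\,\W^2(\tilde\mu,\mu).
\end{equation*}

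Next I arrange for each of the two summands to be at most $\varepsilon/2$. The second summand is handled by the choice of $h\le h^\star$: by Theorem \ref{thm:asymptotic_bias},
\begin{equation*}
2\,\W^2(\tilde\mu,\mu) \;\le\; 284\,d^{1/2}c^{-1}(L/K)^{1/2}L^{1/4}\,h^{3/2},
\end{equation*}
and it remains to check that $h\le h^\star=e^{-4}\bigl(c(\varepsilon/2)/(d^{1/2}(L/K)^{1/2}L^{1/4})\bigr)^{2/3}$ forces the right-hand side below $\varepsilon/2$; equivalently, that $(e^{-4})^{3/2}\cdot(1/2)\le 1/(2\cdot 284)$, i.e.\ $e^6\ge 284$, which holds since $e^6\approx 403$. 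For the first summand, I split on the sign of $\log(2\W^2(\mu,\nu)/\varepsilon)$. If this quantity is nonpositive then $\W^2(\mu,\nu)\le\varepsilon/2$, so trivially $e^{-cm}\W^2(\nu,\mu)\le\varepsilon/2$. Otherwise $m\ge m^\star$ gives $e^{-cm}\le e^{-cm^\star}=\varepsilon/(2\W^2(\mu,\nu))$, and the bound is immediate.

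Combining the two bounds yields $\Delta(m)\le\varepsilon/2+\varepsilon/2=\varepsilon$, as required. There is really no serious obstacle here — Theorems \ref{thm:contractivity} and \ref{thm:asymptotic_bias} do all the heavy lifting — and the only mildly delicate point is the numerical verification that the stated prefactor $e^{-4}$ in $h^\star$ is indeed small enough to absorb the constant $142$ coming from Theorem \ref{thm:asymptotic_bias} after the factor of $2$ introduced by the cross-term absorption.
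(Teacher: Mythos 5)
Your proof is correct and follows essentially the same route as the paper's: decompose via the triangle inequality through $\tilde\mu$, apply Theorem~\ref{thm:contractivity} (with $(1-c)^m\le e^{-cm}$), split $\W^2(\nu,\tilde\mu)$ a second time, and control the two resulting terms by the choice of $m^\star$ and $h^\star$ together with Theorem~\ref{thm:asymptotic_bias}. Your numerical check that $e^6\ge 284$ matches the paper's convenient upper bound $2\cdot 142\le e^6$ for the prefactor.
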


\begin{proof}
Let $m \ge m^{\star}$ and $h \le h^{\star}$.
By the triangle inequality, \begin{align*}
&    \Delta(m) \, \le \, \W^2(\mu, \tilde{\mu}) + \W^2( \tilde{\mu}, \nu \tilde{\pi}^m )  \overset{Thm.~\ref{thm:contractivity}}{\le} \, \W^2(\mu, \tilde{\mu}) + e^{- c \, m }\W^2( \tilde{\mu}, \nu ) \\
& \quad \le  2 \W^2(\mu, \tilde{\mu}) + e^{- c \, m }\W^2( \mu, \nu ) \overset{Thm.~\ref{thm:asymptotic_bias}}{\le} e^6  d^{1/2} c^{-1}    \left(\frac{L}{K} \right)^{1/2}   L^{1/4} h^{3/2}  + e^{- c \, m }\W^2( \mu, \nu ) \;.
\end{align*}
Since $m \ge m^{\star}$ and $h \le h^{\star}$, the required result follows.
\end{proof}

\begin{remark}[$\W^2$-Complexity Guarantee]  \label{rmk:complexity}
For any accuracy $\varepsilon>0$,
Theorem~\ref{thm:uHMC_complexity} states that  $m \ge m^{\star}$ uHMC transition steps with $h \le h^{\star}$ guarantee that $\Delta(m) \le \varepsilon$.  
To turn this into a complexity guarantee, we specify the duration, and in turn, estimate the corresponding number of gradient evaluations.  In particular, if one chooses the duration $T$ to saturate the condition $L T^{2} \le 1/8$ in \eqref{eq:CT}, i.e., $T=(8L)^{-1/2}$, then the $\W^2$-contraction coefficient reduces to $c=48^{-1} K/L$.  Since each uHMC transition step involves $T/h$ gradient evaluations, the corresponding number of gradient evaluations is  \begin{align*}
\scriptsize
\left(m~ \text{transition steps} \right) \times \left( T/h~\text{integration steps} \right)    =   
8^{7/6} \cdot 6^{5/3} \cdot e^4 \,   \left( \frac{d}{K} \right)^{1/3} \left( \frac{L}{K} \right)^{5/3} \left( \frac{\varepsilon}{2} \right)^{-2/3} \log\left( \frac{2 \W^2(\mu, \nu)}{ \varepsilon} \right)^+ \;,
\end{align*}
where explicitly $m = 48 L/K \log\left( \frac{2 \W^2(\mu, \nu)}{ \varepsilon} \right)^+$, $T = (8L)^{-1/2}$ and $h = K (\varepsilon/2)^{2/3} / ( 4 \cdot 6^{2/3} d^{1/3} e^4 L^{7/6} ) $.
\end{remark}

\subsection{Duration Randomization}

We have discussed the benefits of time integration randomization for uHMC with complete momentum refreshment.  Motivated by Monmarch\'{e}'s findings \cite{monmarche2022united}, we explore corresponding results for uHMC with partial momentum refreshment.  Since partial momentum refreshment eventually leads to a complete momentum refreshment after a random number of steps, we consider uHMC with duration randomization, as introduced in \S6 of \cite{BoSa2017}.

The duration-randomized uHMC process is an implementable pure jump process on phase space $\mathbb{R}^{2d}$, alternating between two types of jumps: (i) a single step of a randomized time integrator for the Hamiltonian flow; and (ii) a complete momentum refreshment.  This algorithm uses two hyperparameters: the time step $h$ and the mean duration $\lambda^{-1}$.  The existence of an explicit generator for the process facilitates detailed analyses of contractivity and asymptotic bias.

In Appendix~\ref{sec:dr_uHMC}, we analyze the complexity of the duration-randomized uHMC process in detail and highlight its improvements over the duration-fixed uHMC chain (see Remark~\ref{rmk:durationrandomization}).  The improvement in complexity due to duration randomization is analogous to that from time integration randomization, leading to a better $\W^2$-asymptotic bias.  For time integration randomization, this improvement arises from averaging over the random midpoint used per integration step (see Remark~\ref{rmk:msefundamental}).  For duration randomization, it stems from averaging over the random duration.

Theorem~\ref{thm:contr_mjp} states that the $\W^2$-contraction rate of this randomized uHMC process on phase space is $\gamma = K \lambda^{-1} / 10$.  The proof relies on a synchronous coupling of two copies of the duration-randomized uHMC process.  When well-tuned, the contraction rate becomes $\gamma \propto K^{1/2} (K/L)^{1/2}$. Although this rate appears better than the duration-fixed uHMC rate of $c \propto K/L$, the rates are essentially identical when viewed on the same infinitesimal time scale, i.e., $c/T \propto \gamma$.

To quantify the asymptotic bias, we couple the duration-randomized uHMC process with an `exact' counterpart.  This coupling, which is itself a jump process with generator $\mathcal{A}^C$ defined in \eqref{eq:AC_asympbias}, admits a Foster-Lyapunov function $\rho^2$  given by a quadratic `distorted' metric on phase space \cite{achleitner2015large}, defined in \eqref{eq:dist_metric}.   Using It\^{o}'s formula for jump processes the corresponding finite-time drift condition is used in Theorem~\ref{thm:asympbias_mjp} to quantify the $\W^2$-asymptotic bias of the duration-randomized uHMC process. By optimally tuning the hyperparameters, an improvement in the complexity of the duration-randomized uHMC process is found (see Remark~\ref{rmk:durationrandomization}).

\section{Proofs}

\label{sec:proofs}

In this section we provide the remaining ingredients needed to prove Theorem~\ref{thm:contractivity} and Theorem~\ref{thm:asymptotic_bias}.

\subsection{Proof of $L^2$-Wasserstein contractivity}

\label{sec:w2contractivity}

The following proof carefully adapts ideas from \cite{BouRabeeSchuh2023} and Lemma 6 of \cite{chen2019optimal}.  The main idea in the proof is to carefully balance two competing effects at the random temporal sample points where the potential force is evaluated: (i) strong convexity of $U$ (\ref{A3}); and (ii) co-coercivity of $\nabla U$ (Remark~\ref{rmk:cocoercivity}).

\begin{proof}[Proof of Lemma~\ref{lem:contr_SMC}]
Let $t \in [0,T]$. Introduce the shorthands \begin{equation}
 \label{eq:rand_xt_yt}
 \begin{aligned}
 x_t := \tilde{Q}_{\lb{t}}(x,v) + (\tau_t - \lb{t}) \tilde{P}_{\lb{t}}(x,v)   ~~\text{\&} ~~
 y_t := \tilde{Q}_{\lb{t}}(y,v) + (\tau_t - \lb{t}) \tilde{P}_{\lb{t}}(y,v) \;,
 \end{aligned}
 \end{equation}
 where recall that $\tau_t =  \mathcal{U}_i$ for $t \in [t_i, t_{i+1})$ and $(\mathcal{U}_i)_{i \in \mathbb{N}_0}$ is a sequence of independent random variables such that $ \mathcal{U}_i \sim \operatorname{Uniform}(t_i, t_{i+1})$.  Let $ z_t: = x_t - y_t $, \[
Z_t \, := \, \tilde{Q}_t(x,v) - \tilde{Q}_t(y,v) \;, ~~\text{and} ~~W_t \, := \, \tilde{P}_t(x,v) - \tilde{P}_t(y,v) \;.
 \] Let $A_t \, :=\, |Z_t|^2$, $B_t := 2 Z_t \cdot W_t$ and $a_t \, :=\, |z_t|^2$. Our goal is to obtain an upper bound for $A_t$.  To this end, define \[
\rho_t \, := \,  \phi_t \cdot z_t  \;, \qquad \phi_t \, := \,  \nabla U(x_t) - \nabla U(y_t)  \;,  
\] 
and note that by \ref{A2}, \ref{A3} and \eqref{eq:cocoercive}, \begin{equation} \label{ieq:rho}
    K a_t \, \overset{\ref{A3}}{\le} \, \rho_t  \,  \overset{\ref{A2}}{\le} \, L a_t \, \;, \qquad  |\phi_t|^2 \, \overset{\eqref{eq:cocoercive}}{\le} \, L \rho_t  \;, \qquad \text{for all $t \ge 0$} \;.
\end{equation} 
By \eqref{sMC}, note that \begin{align} 
		\frac{d}{dt} Z_t \, &= W_t \;,
		\qquad \frac{d}{dt} W_t \, = \, -  \phi_{t} \;. \label{eq:dotz_dotw}
\end{align}
 As a consequence of \eqref{eq:dotz_dotw}, a short computation shows that $A_t$ and $B_t$ satisfy
	\begin{align} \label{ivp_a}
		\frac{d}{dt} A_t & \ = \ B_t \;, \qquad \frac{d}{dt} B_t  \ = \   -  K A_t  + 2 |W_t|^2 + \epsilon_t \;,
		\end{align}
where $\epsilon_t \, := \,   K A_t  - 2 Z_t \cdot  \phi_t $. Introduce the shorthand notation \[
 s_{t-r} \, := \, \frac{\sin( \sqrt{ K} (t-r) )}{ \sqrt{ K}} \;, \quad  \text{and} \quad  c_{t-r} \, := \,  \cos(\sqrt{ K} (t-r) ) \;, 
 \] which satisfy $c_{t-r} = - \frac{d}{dr} s_{t-r}$.  By variation of parameters, 
\begin{align} 
   A_T & \, = \, c_T A_0 + \int_0^T s_{T-r} (2 |W_r|^2 + \epsilon_r) dr \;. \label{vop_at} 
    \end{align}
To upper bound the integral involving $|W_r|^2$ in \eqref{vop_at},   use \eqref{eq:dotz_dotw} and note that $W_0=0$, since the initial velocities in the two copies are synchronized.  Therefore, \begin{align} \label{eq:Wt2}
 |W_t|^2 \, = \,   \left|  \int_0^{t}   \phi_{s}  ds \right|^2 \ \, \le \,  t  \int_0^{t} | \phi_{s} |^2 ds
		\, \le \,  L t \int_0^{t}  \rho_{s} ds 	
\end{align}
where in the second step we used Cauchy-Schwarz, and in the last step, we used \eqref{ieq:rho}. Note that since $L T^2 \le 1/8$, $s_{t-r}$ is monotonically decreasing with $r$,  \begin{equation} \label{eq:stmr}
0 \le s_{t-r} \le s_{t-s} \;, \quad \text{for $s \le r \le t \le T \le \frac{\pi/2}{\sqrt{ K}} $} \;. \end{equation} Therefore, combining \eqref{eq:Wt2} and \eqref{eq:stmr}, and by Fubini's Theorem,   \begin{align}
\int_0^t s_{t-r} |W_r|^2 dr & \, \overset{\eqref{eq:Wt2}}{\le} \,
 L \int_0^t \int_0^r  r s_{t-r}   \rho_{s} ds dr
 \overset{\eqref{eq:stmr}}{\le} \,
 L \int_0^t \int_0^r  r s_{t-s}   \rho_{s} ds dr
\nonumber \\
& \ \ \le \,  L \int_0^t \int_s^t  r s_{t-s}  \rho_{s}  dr ds  \, \le \,  (L t^2 / 2) \int_0^t   s_{t-s} \rho_{s}  ds \;. \nonumber
\end{align} In fact, as a byproduct of this calculation, observe that \begin{equation}
(\int_0^t s_{t-r} |W_{\lb{r}}|^2 dr ) \vee ( \int_0^t s_{t-r} |W_r|^2 dr) \, \le \,  (L t^2 / 2) \int_0^t   s_{t-s} \rho_{s}  ds \;.  \label{ieq:wt2}
\end{equation}
To upper bound the integral involving $\epsilon_r$  in \eqref{vop_at}, note by \eqref{eq:dotz_dotw}
\[
Z_t \, = \,  Z_{\lb{t}} + (t - \lb{t}) W_{\lb{t}} - \frac{(t - \lb{t})^2}{2} \phi_t \;,
\]
and hence, $Z_t - z_t = (t  - \tau_t) W_{\lb{t}} - \frac{(t - \lb{t})^2}{2} \phi_t $ and \begin{align}    & \epsilon_t \, = \,  K A_t - 2 \rho_t  - 2 (Z_t - z_t) \cdot \phi_t  \, = \,  K A_t - 2 \rho_t  - 2 ( t - \tau_t) W_{\lb{t}} \cdot \phi_t + ( t- \lb{t})^2 |\phi_t|^2  \nonumber \\
& ~~~\, = \,   K a_{t}  - 2 \rho_{t}  - 2 ( t- \tau_t) W_{\lb{t}} \cdot \phi_t + (t-\lb{t})^2 | \phi_t|^2 \nonumber  \\
    &  \qquad +  K \left|(t - \tau_t) W_{\lb{t}} -  \frac{(t-\lb{t})^2}{2} \phi_t \right|^2  + 2  K \left( (t - \tau_t) W_{\lb{t}} - \frac{(t-\lb{t})^2}{2} \phi_t \right) \cdot z_{t}  \nonumber \\
&  \overset{\eqref{ieq:rho}}{\le}  K a_{t}  - 2 \rho_{t}  - 2 ( t- \tau_t) W_{\lb{t}} \cdot \phi_t + (t-\lb{t})^2 | \phi_t|^2   \nonumber \\
& \qquad +  K \left|(t - \tau_t) W_{\lb{t}} -  \frac{(t-\lb{t})^2}{2} \phi_t \right|^2 + 2  K (t - \tau_t) W_{\lb{t}} \cdot z_{t}     \nonumber \\
& ~  \le  - 2 \rho_{t} +   K ( 1+  K h^2 ) a_{t} + (2 h^2 +  K h^4 /2 )   | \phi_t|^2  + 2 (1 +   K h^2) |W_{\lb{t}}|^2   \label{ieq:epsi}
\end{align}
where in the last step we used Cauchy-Schwarz and Young's product inequality; and in the next to last step we used that $z_t \cdot \phi_t = \rho_t \ge 0$, which follows from \eqref{ieq:rho}.
 Inserting these bounds into the second term of \eqref{vop_at} yields\begin{align}
&  \int_0^T s_{T-r} (2 |W_r|^2 + \epsilon_r) dr \nonumber \\
&  \quad \overset{\eqref{ieq:epsi}}{\le}   \int_0^T s_{T-r} \left(2 |W_r|^2 + 2 (1+ K h^2) |W_{\lb{r}}|^2 -2 \rho_r + (2 h^2 +  K h^4 /2) |\phi_r|^2  +  K (1+ K h^2) a_r \right) dr 
\nonumber \\
& \quad \overset{\eqref{ieq:wt2}}{\le}    \int_0^T s_{T-r} \left(\left[  L T^2 (2 +  K h^2)  -2 \right]  \rho_{r}+ (2 h^2 +  K h^4 /2) |\phi_r|^2 +  K (1+ K h^2) a_r \right) dr 
\nonumber \\
&  \quad \overset{\eqref{ieq:rho}}{\le}   \int_0^{T} s_{T-r} \left( \left[ L h^2 (2 +  K h^2/2) + L T^2 (2 +  K h^2)  -2 \right]  \rho_{r}  +  K (1+ K h^2) a_{r}  \right) dr \;, \nonumber \\
& \quad  \overset{\eqref{ieq:rho}}{\le}   K \int_0^{T} s_{T-r} \left[ L h^2 (2 +  K h^2/2) + L T^2 (2 +  K h^2)  -2   +  (1+ K h^2) \right] a_{r}   dr \;. \nonumber
\end{align}
where in the last step we used that $K \le L$, $h \le T$, 
and $L T^2 \le 1/8$.  In fact, this upper bound is non-positive, and therefore, this term can be dropped from \eqref{vop_at} to obtain $ A_T  \, \le \, c_T A_0$. The required estimate is then obtained by inserting the elementary inequality \begin{align*}
 c_T \, &\le \, 1 - (1/2) K T^2 + (1/6)  K^2 T^4  \, \overset{\eqref{eq:CT}}{\le} \, 1 - (1/2) K T^2 + (1/48)  K T^2 (K/L)  \\
\, & \le \, 1 - (1/2 - 1/48) K T^2
\end{align*} which is valid since $L T^2 \le 1/8$ and $K \le L$.  The required result then holds because $1/2 - 1/48 = 23/48 > 1/3$.  Note, the condition $LT^2 \le 1/8$ was used twice in this proof to avoid complications due to potential periodicities in the underlying Hamiltonian flow.   
\end{proof}

\subsection{A priori upper bounds for the Stratified Monte Carlo Integrator}
The following a priori upper bounds for the sMC and semi-exact flows are useful to prove $L^2$-accuracy of the sMC flow.

\begin{lemma}[A priori bounds] \label{lem:apriori}
Suppose \ref{A1}-\ref{A2} hold. Let $T>0$ satisfy $L T^2 \le 1/8$  and let $h \ge 0$ satisfy $T/h \in \mathbb{Z}$ if $h>0$.  For any $x,y,u,v\in\R^d$, we have almost surely
	\begin{align}
	\label{apriori:1}
&\sup_{s\leq T} |\tilde{Q}_s(x,v)| \vee \sup_{s\leq T} |\bar{q}_s(x,v)|   \leq   (1+L (T^2+Th)) \max(|x|,|x+T v|) \;, \\
\label{apriori:2}
&\sup_{s\leq T}|\tilde{P}_s(x,v)| \vee \sup_{s\leq T}|\bar{p}_s(x,v)|  \leq 
|v| + L T (1+L (T^2+T h))  \max(|x|,|x+T v|). 
\end{align}
\end{lemma}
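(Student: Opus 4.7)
My plan is to treat the sMC and semi-exact flows in a unified way, since integrating \eqref{sMC} (respectively \eqref{semi-exact}) twice produces, for either $(Q,P) \in \{(\tilde Q,\tilde P),(\bar q,\bar p)\}$, the common integral equation
\[
Q_t = x + t v - \int_0^t (t-s) F_s\,ds,\qquad P_t = v - \int_0^t F_s\,ds,
\]
where $F_s$ is the piecewise-constant force on each stratum. For sMC, $F_s = \nabla U(\tilde Q_{\lb s}+(\tau_s-\lb s)\tilde P_{\lb s})$ with $\tau_s-\lb s \in [0,h]$; for the semi-exact flow, $F_s$ is the mean of $\nabla U$ along the segment $r \mapsto \bar q_{\lb s}+(r-\lb s)\bar p_{\lb s}$ on $[\lb s,\lb s + h]$. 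Because \ref{A1} implies $\nabla U(0)=0$, \ref{A2} then gives the almost sure pointwise bound $|F_s| \le L\,(|Q_{\lb s}|+h\,|P_{\lb s}|)$ in both cases.

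Set $A := \sup_{s\le T}|Q_s|$, $B := \sup_{s\le T}|P_s|$, and $M := \max(|x|,|x+Tv|)$. The affine decomposition $x+tv=(1-t/T)\,x+(t/T)(x+Tv)$ together with convexity of the norm yields $|x+tv|\le M$ on $[0,T]$; the triangle inequality applied to $Tv = (x+Tv)-x$ gives $T|v| \le 2M$. Substituting the force bound into the integral representations and taking suprema over $[0,T]$ reduces the problem to the coupled $2\times 2$ linear system
\[
A \le M + \tfrac{LT^2}{2}(A+hB), \qquad B \le |v| + LT(A+hB).
\]
Introducing $W := A+hB$ and combining the two inequalities yields the scalar Gronwall-type estimate $\bigl(1-LT(T/2+h)\bigr)\,W \le M+h|v|$. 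The hypotheses $LT^2 \le 1/8$ and $h \le T$ (forced by $T/h \in \mathbb{Z}$ for $h>0$) give $LT(T/2+h)\le 3/16<1$, so $W$ is bounded by a universal constant times $M+h|v|$, and in turn by a constant multiple of $M$ using $h|v| \le 2Mh/T \le 2M$. Plugging $W$ back into the individual inequalities produces bounds of the announced form $A \le (1+L(T^2+Th))\,M$ and $B \le |v|+LT(1+L(T^2+Th))\,M$.

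The only delicate point --- and the main obstacle --- is matching the specific numerical prefactors in the statement. The coarse supremum argument just described produces bounds of the correct functional form but with slightly suboptimal absolute constants (for instance, an extra factor of $2$ in front of the $LTh$ term). To sharpen these I would replace the continuous-time sup argument with a discrete iteration of the \emph{extrapolated norm} $R_k := \max\bigl(|Q_{t_k}|,\,|Q_{t_k}+(T-t_k)\,P_{t_k}|\bigr)$. Using the quadratic update formula for $Q_{t_{k+1}}$ together with convex-combination bounds on the midpoint at which the force is evaluated, one verifies the one-step estimate $R_{k+1} \le (1+LhT)\,R_k$, which iterates (over $T/h$ strata) to $R_k \le e^{LT^2}M$; continuous-time suprema then pick up only an additional factor $1+Lh^2/2$. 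No probabilistic subtlety enters, since the sMC randomness plays a role only through the pointwise inclusion $\tau_s - \lb s \in [0,h]$; the estimates therefore hold almost surely, and deterministically for the semi-exact flow.
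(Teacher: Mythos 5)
Your identification of the extrapolated norm $R_k := \max\bigl(|Q_{t_k}|,\,|Q_{t_k}+(T-t_k)P_{t_k}|\bigr)$ is exactly the right device --- it is the one underlying Lemma~3.1 of \cite{BoEbZi2020}, which the paper cites --- and your unified treatment of the sMC and semi-exact flows is sound, since (using $\nabla U(0)=0$ from \ref{A1}) in both cases the force on the $k$-th stratum is evaluated at points lying in the convex hull of $Q_{t_k}$ and $Q_{t_k}+hP_{t_k}$, both of norm at most $R_k$ because $h\le T-t_k$ for $k\le n-1$. However, the one-step rate $R_{k+1}\le(1+LhT)R_k$ that you propose is too coarse. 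Iterated $n=T/h$ times it gives $R_n\le(1+LhT)^n M$, which tends to $e^{LT^2}M$ as $h/T\to 0$, and this genuinely exceeds the stated $(1+L(T^2+Th))M$: at $LT^2=1/8$ with $h/T\to 0$ the former tends to $e^{1/8}\approx 1.1331$ while the target tends to $1+LT^2=1.125$. The gap cannot be patched afterward; it is introduced at the per-step level.

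The repair is to retain the time dependence of the coefficient multiplying the force. Carrying the one-step update out exactly, the force enters $Q_{t_{k+1}}+(T-t_{k+1})P_{t_{k+1}}$ with coefficient $h(T-t_k-h/2)$, not $hT$, so the correct per-step estimate is
\[
R_{k+1}\ \le\ \Bigl(1+Lh\bigl(T-t_k-\tfrac{h}{2}\bigr)\Bigr)R_k \;.
\]
These non-uniform rates sum to $\sum_{k=0}^{n-1}Lh\bigl(T-t_k-\tfrac{h}{2}\bigr)=LT^2/2$, so the product is at most $e^{LT^2/2}$; and since $LT^2\le 1/8<2\log 2$, the elementary inequality $e^{a/2}\le 1+a$ is valid on this range (both sides agree at $a=0$ and $\tfrac12 e^{a/2}<1$ there), yielding $R_n\le(1+LT^2)M\le\bigl(1+L(T^2+Th)\bigr)M$. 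The velocity bound then follows by summing $|P_{t_{k+1}}-P_{t_k}|\le LhR_k$ over $k$, and the continuous-time suprema over each stratum are absorbed by the $LTh$ slack. Finally, your initial supremum argument via $W=A+hB$ is correct as far as it goes but, as you anticipated, produces a fixed constant such as $\tfrac{16}{13}$ in front of $M$ rather than the $1+L(T^2+Th)$ prefactor; that prefactor matters downstream, since the proof of Lemma~\ref{lem:smc_strong_accuracy_semi_exact} uses the explicit threshold $L(T^2+Th)\le 1/4$ to produce its numerical constants.
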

The proof of Lemma~\ref{lem:apriori} is nearly identical to the proof of Lemma~3.1 of \cite{BoEbZi2020} and hence is omitted. 

\subsection{Proof of $L^2$-Accuracy of sMC Integrator}

\label{sec:l2accuracy}

The next two lemmas combined with the triangle inequality imply $L^2$-accuracy of the sMC integrator given in Lemma~\ref{lem:smc_strong_accuracy}.

\begin{lemma}[$L^2$-accuracy of sMC Time Integrator with respect to Semi-Exact Flow]
\label{lem:smc_strong_accuracy_semi_exact}
Suppose \ref{A1}-\ref{A2} hold. Let $T>0$ satisfy $L T^2 \le 1/8$  and let $h >0$ satisfy $T/h \in \mathbb{Z}$.  Then for any $x, v \in \mathbb{R}^d$ and $k \in \mathbb{N}_0$ such that $t_k \le T$, \begin{equation}
    \left( E[ |\tilde{Q}_{t_k}(x,v) - \bar{q}_{t_k}(x,v)|^2 ] \right)^{1/2} \, \le \,  \, \sqrt{2} e^{31/8} L^{1/4} ( |v| + L^{1/2} |x| ) \, h^{3/2} \;. \label{str_err_smc_semi_exact}
\end{equation}
\end{lemma}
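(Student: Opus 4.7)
My plan is to view the difference between the sMC and semi-exact flows as a sum of conditionally mean-zero ``Monte Carlo'' increments plus a Lipschitz ``propagation-of-error'' drift, and then exploit $L^2$-orthogonality of the Monte Carlo increments to gain the extra half-order of accuracy (the mechanism highlighted in Remark~\ref{rmk:msefundamental}). Concretely, set $Z_t := \tilde Q_t(x,v) - \bar q_t(x,v)$ and $W_t := \tilde P_t(x,v) - \bar p_t(x,v)$, so that $Z_0 = W_0 = 0$ and $\dot Z_t = W_t$. From \eqref{sMC} and \eqref{semi-exact}, on each stratum $[t_i, t_{i+1})$ one has $\dot W_t = M_i - D_i$ with
\begin{equation*}
\begin{aligned}
M_i &:= -\nabla U\bigl(\tilde Q_{t_i} + (\mathcal U_i - t_i)\tilde P_{t_i}\bigr) + \frac{1}{h}\int_{t_i}^{t_{i+1}} \nabla U\bigl(\tilde Q_{t_i} + (s - t_i)\tilde P_{t_i}\bigr)\,ds, \\
D_i &:= \frac{1}{h}\int_{t_i}^{t_{i+1}}\bigl[\nabla U\bigl(\tilde Q_{t_i} + (s-t_i)\tilde P_{t_i}\bigr) - \nabla U\bigl(\bar q_{t_i} + (s-t_i)\bar p_{t_i}\bigr)\bigr]\,ds.
\end{aligned}
\end{equation*}

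First I would record the key structural facts. Since $\mathcal U_i \sim \operatorname{Uniform}(t_i, t_{i+1})$ is independent of $\mathcal F_{t_i} := \sigma(\mathcal U_0,\dots,\mathcal U_{i-1})$, a direct computation gives $E[M_i \mid \mathcal F_{t_i}] = 0$, so $\{M_i\}_{i \ge 0}$ is $L^2$-orthogonal. The $L$-Lipschitz bound in \ref{A2} yields the pointwise estimates $|M_i| \le L h\,|\tilde P_{t_i}|$ and $|D_i| \le L\bigl(|Z_{t_i}| + (h/2)\,|W_{t_i}|\bigr)$, while Lemma~\ref{lem:apriori} gives an a priori bound of the form $|\tilde P_{t_i}| \le C\bigl(|v| + L^{1/2}|x|\bigr)$ under $LT^2 \le 1/8$.

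Next I would integrate the error system twice from $0$, using that $M - D$ is piecewise constant, to obtain the closed form
\begin{equation*}
W_{t_k} = \sum_{i=0}^{k-1} h\,(M_i - D_i), \qquad Z_{t_k} = \sum_{i=0}^{k-1} c_i\,(M_i - D_i),
\end{equation*}
with coefficients $c_i := h(t_k - t_i) - h^2/2 \le hT$ satisfying $\sum_i c_i^2 \le h T^3$ and $\sum_i c_i \le T^2$. Setting $e_k := E|Z_{t_k}|^2 + h^2\, E|W_{t_k}|^2$ and splitting each sum via $|a+b|^2 \le 2|a|^2 + 2|b|^2$, $L^2$-orthogonality collapses the Monte Carlo variance to $\sum_i c_i^2 E|M_i|^2$; combined with $E|M_i|^2 \le C L^2 h^2 (|v|^2 + L|x|^2)$ and $LT^2 \le 1/8$, I expect this to deliver the target size $C_1\, L^{1/2} h^3(|v|^2 + L|x|^2)$. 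The drift contributions from $\sum_i c_i D_i$ and $\sum_i h D_i$ I would control by (weighted) Cauchy--Schwarz using the pointwise bound on $|D_i|$, producing a term of the form $C_2\,(LT^2)^2 \max_{i<k} e_i$ with coefficient strictly below $1$ when $LT^2 \le 1/8$. A discrete Gronwall / supremum iteration on the resulting self-bounding inequality would then yield $e_k \le C\, L^{1/2} h^3 (|v|^2 + L|x|^2)$ for all $t_k \le T$, and taking square roots together with $(|v|^2 + L|x|^2)^{1/2} \le |v| + L^{1/2}|x|$ gives \eqref{str_err_smc_semi_exact}.

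The main obstacle is the $L^2$ estimate in the third step. A naive pointwise control of $|M_i|$ summed over $k \sim T/h$ strata would only yield $O(h)$ accuracy; the $3/2$-order rate hinges on the $L^2$-orthogonality of $\{M_i\}$, which converts a sum of $k$ variances into a single square-root factor $\sqrt{k} \cdot h \sim \sqrt{Th}$ that contributes the extra $h^{1/2}$. Careful bookkeeping is also needed to ensure that the drift contribution --- which itself involves $e_i$ --- remains subdominant; the hypothesis $LT^2 \le 1/8$ is used twice here, once for the a priori bound in Lemma~\ref{lem:apriori} and once to keep the drift coefficient strictly below $1$ in the self-bounding inequality. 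The explicit numerical prefactor $\sqrt{2}\,e^{31/8}$ in \eqref{str_err_smc_semi_exact} would emerge from carefully tracking the constants in Lemma~\ref{lem:apriori} and in the exponential blow-up from the Gronwall step.
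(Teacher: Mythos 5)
Your decomposition $\dot W_t = M_i - D_i$ into a conditionally mean-zero piece and a Lipschitz drift is correct, your identification of $L^2$-orthogonality of $\{M_i\}$ as the engine of the extra half-order is the right mechanism, and the closed-form sums $W_{t_k} = \sum_i h(M_i-D_i)$, $Z_{t_k}=\sum_i c_i(M_i-D_i)$ with $c_i = h(t_k-t_i)-h^2/2$ are correct. This is a valid alternative to the paper's argument and I believe it can be pushed to completion. However, it is a genuinely different route: the paper does not pass to a global sum representation. Instead, it works with a one-step recursion for the distorted $\ell_2$-metric
\[
\rho_t^2 \;=\; E|Z_t|^2 + L^{-1/2} E\langle W_t, Z_t\rangle + L^{-1} E|W_t|^2 ,
\]
expands $\rho_{t_{k+1}}^2$ into three groups of terms, and \emph{there} uses the conditional unbiasedness $|E[\tilde F_{t_k}-\bar F_{t_k}\mid\mathcal F_{t_k}]|\le L|Z_{t_k}| + Lh|W_{t_k}|$ to show the dangerous $O(h)$ cross terms $E\langle Z_{t_k},\cdot\rangle$ and $E\langle W_{t_k},\cdot\rangle$ are actually $O(L^{1/2}h)\rho_{t_k}^2$, and then closes with a discrete Gr\"{o}nwall. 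Where you get the cancellation from explicit orthogonality of a martingale-difference array in a global sum, the paper gets it from conditioning inside a local recursion; where you use a straightened weight $h^2$ in $e_k$, the paper uses $L^{-1}$ with a cross term. Your version arguably exposes the ``$\sqrt{k}\cdot h\sim(Th)^{1/2}$'' mechanism more transparently, while the paper's one-step structure makes tracking the explicit numerical prefactor routine. Two things to watch in completing your argument: (i) you state the target inequality with the paper's explicit constant $\sqrt{2}\,e^{31/8}$, but your route will produce its own constant, which you would have to verify does not exceed the stated one (or, as is typical, accept a different explicit constant); (ii) your ``self-bounding'' closure $e_k \le (\text{source}) + \theta\max_{i<k}e_i$ with $\theta<1$ is fine, but is really a strong-induction argument from $e_0=0$ rather than a Gr\"{o}nwall lemma per se, so phrase it as such to avoid implying an appeal to Lemma~\ref{lem:groenwall} that doesn't quite match.
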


For a heuristic explanation of the $3/2$-order of accuracy appearing in \eqref{str_err_smc_semi_exact}, see Remark~\ref{rmk:msefundamental}.  As highlighted in the proof, this scaling results from the cancellations of $O(h)$-terms due to the independence of the random temporal sample points used by the sMC time integrator.

\begin{lemma}[Accuracy of Semi-Exact Flow]
\label{lem:semi_exact_accuracy}
Suppose \ref{A1}-\ref{A2} hold. Let $T>0$ satisfy $L T^2 \le 1/8$,  and let $h>0$ satisfy $T/h \in \mathbb{Z}$.  Then for any $x, v \in \mathbb{R}^d$ and $k \in \mathbb{N}_0$ such that $t_k \le T$, \begin{equation}
     |\bar{q}_{t_k}(x,v) - q_{t_k}(x,v)|  \, \le \,  \, 2 e^{5/8} L^{1/2}  (    |v| +   L^{1/2} |x| ) h^2 \;. \label{semi_exact_accuracy}
\end{equation}
\end{lemma}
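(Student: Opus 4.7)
The plan is to estimate $R_t := \bar q_t - q_t$ and $S_t := \bar p_t - p_t$ via a Duhamel representation combined with a carefully chosen decomposition of the force discrepancy. From \eqref{semi-exact}, \eqref{eq:Hamflow}, and the common initial data $R_0 = S_0 = 0$, one has $\dot R_t = S_t$ and $\dot S_t = G(t)$, where
\[
G(t) := \frac{1}{h}\int_0^h \bigl[\nabla U(q_t) - \nabla U(\bar q_{\lb{t}} + \delta\, \bar p_{\lb{t}})\bigr]\, d\delta,
\]
so that $R_{t_k} = \int_0^{t_k}(t_k-s)\,G(s)\, ds$ and $S_{t_k} = \int_0^{t_k} G(s)\,ds$. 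The a priori bounds of Lemma~\ref{lem:apriori}, together with \ref{A2}, will control the pointwise sizes of the exact and semi-exact flows in terms of $M_v := |v| + L^{1/2}|x|$.

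On each stratum $[t_i,t_{i+1})$, writing $r := s - t_i$, I would insert the forward-Euler auxiliary points $q_{t_i} + r p_{t_i}$ and $q_{t_i} + \delta p_{t_i}$ and split the integrand of $G(s)$ as $(A)+(B)+(C)$ with
\[
\begin{aligned}
(A) &:= \nabla U(q_s) - \nabla U(q_{t_i} + r p_{t_i}), \\
(B) &:= \nabla U(q_{t_i} + r p_{t_i}) - \nabla U(q_{t_i}+\delta p_{t_i}), \\
(C) &:= \nabla U(q_{t_i} + \delta p_{t_i}) - \nabla U(\bar q_{t_i}+\delta \bar p_{t_i}).
\end{aligned}
\]
Iterating the defect equation for $q$ once yields $|q_s - q_{t_i} - r p_{t_i}| \le \tfrac12 L h^2 \sup|q|$, so $|(A)|\le \tfrac12 L^2 h^2 M_q$ by \ref{A2}; the $\delta$-average of $(C)$ is at most $L|R_{t_i}| + \tfrac12 L h |S_{t_i}|$; and the $\delta$-average of $(B)$ equals $\tilde B_s := \nabla U(q_{t_i}+r p_{t_i}) - \tfrac{1}{h}\int_0^h \nabla U(q_{t_i}+\delta p_{t_i})\,d\delta$, which is pointwise bounded by $\tfrac12 L h|p_{t_i}|$ but \emph{integrates to exactly zero over every full stratum}.

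The crucial step, which I expect to be the main obstacle, is to turn this zero-mean property into an extra factor of $h$, since the pointwise bound on $\tilde B_s$ alone would cost one full order of accuracy. Writing $(t_k - s) = (t_k - t_i) - (s - t_i)$ on each stratum, the first summand annihilates the $\tilde B$-contribution by the zero-average property, while the second introduces $s - t_i \le h$, yielding $\bigl|\int_{t_i}^{t_{i+1}}(t_k-s)\tilde B_s\,ds\bigr| \le \tfrac12 L h^3 |p_{t_i}|$ and hence a total $(B)$-contribution to $R_{t_k}$ of order $L T h^2 M_p \le \sqrt{L/8}\, h^2 M_p$ after summing the $T/h$ strata; for $S_{t_k} = \int_0^{t_k}G(s)\,ds$, the $(B)$-contribution vanishes exactly, since the integration runs over full strata only. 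Together with the $(A)$-bound (absorbing $(LT^2)$-factors via $L T^2 \le 1/8$) and the a priori bounds $M_q, M_p \lesssim M_v$ from Lemma~\ref{lem:apriori}, these estimates produce a coupled inequality of the form
\[
|R_{t_k}| + T|S_{t_k}| \;\le\; C\, L^{1/2} M_v\, h^2 \;+\; 2LT^2 \max_{j\le k}\bigl(|R_{t_j}| + T|S_{t_j}|\bigr),
\]
from which a Gronwall-style resolution (the source of the paper's $e^{5/8}$ prefactor) yields \eqref{semi_exact_accuracy}.
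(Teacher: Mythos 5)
Your proposal is correct and takes a genuinely different route from the paper's. The paper's proof runs a one-step recursion for the errors $(z_{t_k},w_{t_k})=(\bar q_{t_k}-q_{t_k},\,\bar p_{t_k}-p_{t_k})$, bounds the per-step force discrepancy by a direct Lipschitz comparison in which the averaging variable $\delta$ of $\bar F_{t_k}$ is identified with $s-t_k$ in $\int_{t_k}^{t_{k+1}}\nabla U(q_s)\,ds$ (so there is no zero-mean term to exploit at all; the $O(h^3)$ per step comes from the Taylor remainder $\int_{t_k}^s(s-r)F_r\,dr$ and from the double time integral in the position update), and then closes with a discrete Gr\"{o}nwall on the weighted $\ell_1$-metric $|z_t|+L^{-1/2}|w_t|$. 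You instead fix the global Duhamel representation $R_{t_k}=\int_0^{t_k}(t_k-s)G(s)\,ds$, $S_{t_k}=\int_0^{t_k}G(s)\,ds$, introduce an intermediate forward-Euler point $q_{t_i}+\delta p_{t_i}$ of the \emph{exact} flow to split the integrand into $(A)+(B)+(C)$, and then recover the lost order for the position error through the zero-mean property of $\tilde B_s$ over each full stratum (via the $(t_k-s)=(t_k-t_i)-(s-t_i)$ split). That cancellation argument is the signature of the sMC-vs-semi-exact comparison in Lemma~\ref{lem:smc_strong_accuracy_semi_exact}, so you have in effect imported the stochastic-cancellation idea into the deterministic semi-exact-vs-exact step where the paper manages without it. Both arguments close (yours via the self-consistency inequality absorbed by $2LT^2\le 1/4$, the paper's via Gr\"{o}nwall) and both invoke Lemma~\ref{lem:apriori} extended to the exact flow, a shared minor imprecision. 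Two small numerical slips worth noting but not gaps: the per-stratum $(B)$-bound is $\tfrac14 L h^3|p_{t_i}|$ rather than $\tfrac12 L h^3|p_{t_i}|$, and the a priori bound for positions gives $L^{1/2}\sup|q_s|\lesssim M_v$, not $\sup|q_s|\lesssim M_v$ directly, though your application only uses the former.
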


The proofs of these lemmas use a discrete Gr\"{o}nwall inequality, which we include here for the reader's convenience.  
\begin{lemma}[Discrete Gr\"{o}nwall inequality]
\label{lem:groenwall}
Let $\lambda, h \in \mathbb{R}$ be such that $1+\lambda h > 0$.  Suppose that $(g_k)_{k \in \mathbb{N}_0}$ is a non-decreasing sequence, and $(a_k)_{k \in \mathbb{N}_0}$ satisfies $a_{k+1} \le (1 + \lambda h) a_{k} + g_{k}$ for $k \in \mathbb{N}_0$.  Then it holds \[
a_k \le (1+\lambda h)^k a_0 + \frac{1}{\lambda h} ( (1+\lambda h)^k - 1) g_{k-1} , \quad \text{for all $k \in \mathbb{N}$} \;.
\]
\end{lemma}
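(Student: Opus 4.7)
The plan is to iterate the one-step recursion and then apply monotonicity of $(g_k)$ to collapse the resulting telescoping sum into the stated closed form. Concretely, I would first prove by a straightforward induction on $k$ that
\begin{equation*}
a_k \,\le\, (1+\lambda h)^k a_0 \,+\, \sum_{j=0}^{k-1} (1+\lambda h)^{k-1-j}\, g_j,
\end{equation*}
since the base case $k=1$ is immediate from the hypothesis, and the inductive step just multiplies the previous bound by $(1+\lambda h)$, which is legitimate because $1+\lambda h>0$, and adds $g_k$, reproducing the same template at index $k+1$.

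Next I would bound the sum from above by using that $(g_j)$ is non-decreasing, so $g_j\le g_{k-1}$ for every $0\le j\le k-1$. This gives
\begin{equation*}
\sum_{j=0}^{k-1} (1+\lambda h)^{k-1-j} g_j \,\le\, g_{k-1}\sum_{i=0}^{k-1}(1+\lambda h)^{i},
\end{equation*}
after a change of index $i=k-1-j$. The geometric series evaluates to $\bigl((1+\lambda h)^k-1\bigr)/(\lambda h)$ whenever $\lambda h\ne 0$; substituting this and combining with the first term gives precisely the stated inequality. The case $\lambda h=0$ is handled by interpreting the prefactor $((1+\lambda h)^k-1)/(\lambda h)$ as its limit $k$, which matches the trivial bound $a_k\le a_0+k\,g_{k-1}$ read directly off the iteration.

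There is no real obstacle here: the only things to be slightly careful about are (i) that $1+\lambda h>0$ is used to preserve the inequality when multiplying through in the induction step, and (ii) that the geometric sum formula needs the standard interpretation when $\lambda h=0$. Everything else is bookkeeping of a geometric series and a monotone majorization.
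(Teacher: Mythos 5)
Your proof is correct and is the standard argument for this classical inequality; the paper states Lemma~\ref{lem:groenwall} without proof, so there is nothing to compare against, but the iterate-then-majorize-by-$g_{k-1}$-and-sum-the-geometric-series route is exactly the expected one, and your handling of the edge case $\lambda h = 0$ is appropriate.
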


\begin{proof}[Proof of Lemma~\ref{lem:smc_strong_accuracy_semi_exact}]
Let $(\tilde{Q}_t(x,v), \tilde{P}_t(x,v))$ be a realization of the sMC flow from the initial condition $(x,v) \in \mathbb{R}^{2d}$ which satisfies \eqref{sMC}.   A key ingredient in this proof are the a priori upper bounds in Lemma~\ref{lem:apriori}.  In particular, since $L (T^2 + T h) \le 1/4$, which follows from the hypotheses $LT^2 \le 1/8$ and $T/h \in \mathbb{Z}$,  \eqref{apriori:2} and the Cauchy-Schwarz inequality imply that \begin{align}
\label{apriori:2a}
\sup_{s \le T} |\tilde{P}_s(x,v)|^2 \vee \sup_{s \le T} |\bar{p}_s(x,v)|^2  &\le 3 |v|^2 + 4 L^2 T^2 (|x|^2 + T^2 |v|^2) \;.
\end{align}
For all $t \ge 0$, let $\mathcal{F}_t$ denote the sigma-algebra of events up to $t$ generated by the independent sequence of random temporal sample points $(\mathcal{U}_i )_{i \in \mathbb{N}_0}$.  Define the distorted $\ell_2$- metric\footnote{This type of quadratic ``distorted'' metric naturally arises in the study of the large-time behavior of dynamical systems with a Hamiltonian part; e.g., see (2.51) of \cite{achleitner2015large}. Here we use the distorted metric differently, namely to quantify $L^2$-accuracy of the sMC time integrator.}  \begin{equation} \rho_t^2 \ := \ E \, |Z_t |^2 +  L^{-1/2} E \langle W_t, Z_t \rangle +  L^{-1} E \, |W_t |^2 \;, 
\end{equation}
where $Z_t:= \tilde{Q}_t(x,v) - \bar{q}_t(x,v)$ and $W_t:=\tilde{P}_t(x,v) - \bar{p}_t(x,v)$. 
By Young's product inequality,  \begin{equation}
\frac{1}{2} \left( E|Z_t|^2 + L^{-1} E |W_t|^2 \right)    \, \le \, \rho_t^2 \, \le \frac{3}{2} \left( E|Z_t|^2 + L^{-1} E |W_t|^2 \right)   \;. \label{ieq:rho_t}
\end{equation} As a shorthand notation, for any $k \in \mathbb{N}_0$, let  \[
\tilde{F}_{t_k} \ := \ - \nabla U(\tilde{Q}_{t_k} + (\mathcal{U}_{k} - t_k) \tilde{P}_{t_k}) \;,  \quad \text{and} \quad  \bar{F}_{t_k} \ := \   - E [\nabla U(\bar{q}_{t_k} + (\mathcal{U}_{k} - t_k) \bar{p}_{t_k})] \;.
\]   
Since the  sMC and semi-exact flows satisfy \eqref{sMC} and \eqref{semi-exact} respectively, \begin{align}
 Z_{t_{k+1}} \,  &= \, Z_{t_k} + h W_{t_k} + \frac{h^2}{2} ( \tilde{F}_{t_{k}} - \bar{F}_{t_k} )  \;,  \label{Z_tkp1} \\
W_{t_{k+1}} \, &= \,  W_{t_k} + h ( \tilde{F}_{t_{k}} - \bar{F}_{t_k} ) \;. 
\label{W_tkp1}
\end{align} 
Using, in turn, the Cauchy-Schwarz inequality, the $L$-Lipschitz continuity of $\nabla U$, and Jensen's inequality, we obtain: \begin{align}
& E | \tilde{F}_{t_{k}} - \bar{F}_{t_k} |^2 \, = \,  E \left| \frac{1}{h}  \int_{t_k}^{t_{k+1}} \left( \nabla U(\tilde{Q}_{t_k} + (\mathcal{U}_{k} - t_k) \tilde{P}_{t_k}) -\nabla U(\bar{q}_{t_k} + (u - t_k) \bar{p}_{t_k})  \right) du \right|^2 \nonumber \\ 
& \, \overset{\text{Cauchy-Schwarz}}{\le} \, E  \frac{1}{h}\int_{t_k}^{t_{k+1}}  \left| \nabla U(\tilde{Q}_{t_k} + (\mathcal{U}_{k} - t_k) \tilde{P}_{t_k}) -\nabla U(\bar{q}_{t_k} + (u - t_k) \bar{p}_{t_k})   \right|^2 du 
\nonumber \\
& \, \overset{\ref{A2}}{\le} \,   \frac{L^2}{h} E\int_{t_k}^{t_{k+1}}  \left| Z_{t_k} + (\mathcal{U}_{k} - t_k) \tilde{P}_{t_k} - (u - t_k) \bar{p}_{t_k}  \right|^2 du
\nonumber \\
& \, \overset{\text{Jensen}}{\le} \,
3 L^2  E|Z_{t_k}|^2 + 3 L^2  h^2 (E|\tilde{P}_{t_k}|^2 + |\bar{p}_{t_k}|^2 )  \label{ieq:dF2} \;.  
\end{align} 
Moreover, since the sMC flow is an unbiased estimator of the semi-exact flow, and due to independence of the random temporal sample points used by the sMC time integrator,  \begin{align}
|E  [  \tilde{F}_{t_{k}} - \bar{F}_{t_k} \mid \mathcal{F}_{t_k} ]| 
\, &= \, 
|E [ \nabla U(\bar{q}_{t_k} + (\mathcal{U}_{k} - t_k) \bar{p}_{t_k}) - \nabla U(\tilde{Q}_{t_k} + (\mathcal{U}_{k} - t_k) \tilde{P}_{t_k}) \mid \mathcal{F}_{t_k} ]|  \;, \nonumber \\
\, &\overset{\ref{A2}}{\le} \, L |Z_{t_k}| + L h |W_{t_k}| \;, \label{ieq:unbiased}
\end{align}
and it follows that, \begin{align}
E \langle Z_{t_k} , \tilde{F}_{t_{k}} - \bar{F}_{t_k} \rangle \, &= \, E \langle Z_{t_k}  , E [ \tilde{F}_{t_{k}} - \bar{F}_{t_k} \mid \mathcal{F}_{t_k} ] \rangle 
\, \le \, 
E( |Z_{t_k}|  \cdot |E [ \tilde{F}_{t_{k}} - \bar{F}_{t_k} \mid \mathcal{F}_{t_k} ]| ) \nonumber \\
& \, \overset{\eqref{ieq:unbiased}}{\le} \, L E [ |Z_{t_k}| ( |Z_{t_k}| +  h |W_{t_k}| ) ] \nonumber \\
\, &\le \, (3/2) L  E|Z_{t_k}|^2 + (1/2) L h^2  E|W_{t_k}|^2      \label{ieq:z_df}
\end{align}
where we used, in turn, the Cauchy-Schwarz inequality, \eqref{ieq:unbiased}, and Young's product inequality. Similarly,\begin{align}
E \langle W_{t_k} , \tilde{F}_{t_{k}} - \bar{F}_{t_k} \rangle  \, &\le \, L E[|W_{t_k}|( |Z_{t_k}| + h |W_{t_k}|) ] \label{ieq:w_df_raw} \\  \qquad \, &\le \, (1/2) L h^{-1} E|Z_{t_k}|^2 + (3/2) L h E|W_{t_k}|^2   \;.    \label{ieq:w_df}
\end{align}
By \eqref{Z_tkp1} and \eqref{W_tkp1},   \begin{align}
\rho_{t_{k+1}}^2 \, &= \, \rn{1}+\rn{2} +\rn{3} \qquad \text{where we have introduced} \label{eq:rn1to3} \\
    \rn{1} \, &:= \, E |Z_{t_k}|^2 + (  L^{-1/2} + 2 h ) E \langle Z_{t_k}, W_{t_k} \rangle + ( L^{-1}  +  L^{-1/2} h + h^2) E |W_{t_k}|^2   \;, \nonumber \\
   \rn{2} \, &:= \,    (h^2 +  L^{-1/2} h) E \langle Z_{t_k}, \tilde{F}_{t_{k}} - \bar{F}_{t_k} \rangle    + h (h^2 + \frac{3}{2}  L^{-1/2} h + 2 L^{-1} )  E \langle W_{t_k}, \tilde{F}_{t_{k}} - \bar{F}_{t_k} \rangle \;, \nonumber  \\
   \rn{3} \, &:= \, ( \frac{h^4}{4} +  L^{-1/2} \frac{h^3}{2} +  L^{-1} h^2 ) E |\tilde{F}_{t_{k}} - \bar{F}_{t_k}|^2 \;. \nonumber 
\end{align} Since $L h^2 \le 1/8$ implies $L^{1/2} h \le 1/2$ , \begin{align}
    \rn{1} \, & \le \, (1 + 2 L^{1/2} h ) \rho_{t_k}^2 +  (L^{1/2} h + L h^2 -2 L^{1/2} h  ) L^{-1}E |W_{t_k}|^2 \nonumber \\
    \, &\le \,  (1 + 2 L^{1/2} h ) \rho_{t_k}^2 + L^{1/2} h (1  + \frac{1}{2} -2  ) L^{-1} E |W_{t_k}|^2  \, \le \, (1 + 2 L^{1/2} h ) \rho_{t_k}^2  \label{ieq:rn1}  \;. 
\end{align}
By \eqref{ieq:z_df}, \eqref{ieq:w_df_raw} and \eqref{ieq:w_df},  \begin{align}
    \rn{2} \, &\le \,  (h^2 +  L^{-1/2} h) \left( \frac{3}{2} L E|Z_{t_k}|^2 + \frac{1}{2} L h^2  E|W_{t_k}|^2   \right) \nonumber \\ 
    & ~+ (h^2 + \frac{3}{2}  L^{-1/2} h  ) \left( \frac{1}{2} L E|Z_{t_k}|^2 + \frac{3}{2} L h^2 E|W_{t_k}|^2 \right) +  2hL^{-1} E \langle W_{t_k}, \tilde{F}_{t_{k}} - \bar{F}_{t_k} \rangle  \nonumber \\
    & \overset{\eqref{ieq:w_df_raw}}{\le} ( 2 L h^2 + \frac{9}{4}  L^{1/2} h   ) E|Z_{t_k}|^2 + ( 2 (L h^2)^2 + \frac{11}{4}  (L h^2)^{3/2} ) L^{-1} E|W_{t_k}|^2  \nonumber \\ 
    & ~+  2h E [|W_{t_k}| (|Z_{t_k}| + h |W_{t_k}|)] \nonumber\\
    & \le ( 2 L h^2 + \frac{9}{4}  L^{1/2} h   ) E|Z_{t_k}|^2 + ( 2 (L h^2)^2 + \frac{11}{4}  (L h^2)^{3/2} ) L^{-1} E|W_{t_k}|^2  \nonumber\\ 
    &~ + 2(Lh^2)L^{-1} E[|W_{t_k}|^2] + 2h \left( \frac{L^{1/2}}{2} E[|Z_{t_k}|^2] + \frac{1}{2L^{1/2}} E[|W_{t_k}|^2] \right)  \nonumber \\
    & = \left( \frac{13}{4} + 2 L^{1/2}h \right) L^{1/2} h E[|Z_{t_k}|^2] \nonumber\\ 
    &~ + \left( 2 (Lh^2)^{3/2} + \frac{11}{4} Lh^2 + 2 L^{1/2}h + 1 \right) (L^{1/2} h) L^{-1} E[|W_{t_k}|^2] \nonumber \\
    & \le \left( \frac{13}{4}  + 1 \right) L^{1/2} h E [ |Z_{t_k}|^2] + \left( \frac{1}{8} + \frac{11}{32} + 2 \right) (L^{1/2} h) L^{-1}E[ |W_{t_k}|^2] \nonumber \\
    & \le \frac{17}{4} L^{1/2}h  \left( E[|Z_{t_k}|^2] + L^{-1} E[|W_{t_k}|^2\right)  \, \overset{\eqref{ieq:rho_t}}{\leq} \, \frac{17}{2} (L^{1/2} h) \rho_{t_k}^2  \label{ieq:rn2}
    \end{align} 
    where we used $L^{1/2} h \le 1/2$, $L h^2 \le 1/8$ and \eqref{ieq:rho_t}. Finally, by \eqref{ieq:dF2}, and using once more $L^{1/2} h \le 1/2$, $L h^2 \le 1/8$ and \eqref{ieq:rho_t}, 
\begin{align}
    \rn{3} \, &\le \,  ( \frac{h^4}{4} +  L^{-1/2} \frac{h^3}{2} +  L^{-1} h^2 )  \left( 3 L^2  E|Z_{t_k}|^2 + 3 L^2  h^2 (E|\tilde{P}_{t_k}|^2 + |\bar{p}_{t_k}|^2 )   \right) \nonumber \\
    \, &\le \, 2 L^{1/2} h E|Z_{t_k}|^2  + ( \frac{3}{4} (L h^2)^2 + \frac{3 }{2} (L h^2)^{3/2} + 3  L h^2  ) 
    h^2 (E|\tilde{P}_{t_k}|^2 + |\bar{p}_{t_k}|^2 ) \nonumber \\
    \, &\overset{\eqref{ieq:rho_t}}{\leq} \, 4 L^{1/2} h \rho_{t_k}^2 + ( \frac{3}{4} (L h^2)^2 + \frac{3 }{2} (L h^2)^{3/2} + 3  L h^2  ) 
    h^2 ( L |x|^2 + \frac{49}{8} |v|^2 ) \label{ieq:rn3} 
    \end{align}
    where in the last step we inserted the a priori upper bounds from \eqref{apriori:2a}.  Inserting \eqref{ieq:rn1}, \eqref{ieq:rn2} and \eqref{ieq:rn3}  into \eqref{eq:rn1to3} yields \begin{align*}
        \rho_{t_{k+1}}^2 \le (1 + \frac{29}{2} L^{1/2} h) \rho_{t_k}^2 +  ( \frac{3}{4} (L h^2)^2 + \frac{3 }{2} (L h^2)^{3/2} + 3  L h^2  ) 
    h^2 ( L |x|^2 + \frac{49}{8} |v|^2 ) \;.
    \end{align*}  
By discrete Gr\"{o}nwall inequality (Lemma~\ref{lem:groenwall}), 
\begin{align*}
& \rho_{t_{k}}^2 \le  \frac{6}{29} e^{29 L^{1/2} T /2}  ( \frac{1}{4} (L h^2) + \frac{1}{2} (L h^2)^{1/2} + 1 ) 
    (L h^2)^{1/2} h^2 ( L |x|^2 + \frac{49}{8} |v|^2) \\ 
& \quad  \le \,  \left(\frac{6}{29} \right) \left( \frac{41}{32} \right) \left( \frac{49}{8}\right) e^{29/4} L^{1/2} h^3 (L|x|^2 + |v|^2)  \le e^{31/4} L^{1/2}h^3 (L|x|^2 + |v|^2)\;.
\end{align*}
Here we simplified via $L T^2 \le 1/8$, $L^{1/2} T \le 1/2$ and $T/h \in \mathbb{Z}$. Employing  \eqref{ieq:rho_t} gives the required upper bound.  
\end{proof}

\begin{proof}[Proof of Lemma~\ref{lem:semi_exact_accuracy}]
Define the weighted  $\ell_1$-metric  \begin{equation} \label{eq:ell1_rho_t}
\rho_t := |z_t| + L^{-1/2} |w_t| \;, 
\end{equation}
where $z_t := \bar{q}_t(x,v) - q_t(x,v)$ and $w_t := \bar{p}_t(x,v) - p_t(x,v)$.  As a shorthand, let $\bar F_{t_k} := -E [ \nabla U( \bar{q}_{t_k} + (\mathcal{U}_k - t_k) \bar{p}_{t_k} ) ]$ and $F_t := - \nabla U(q_t)$.  
Since the exact and semi-exact flows satisfy \eqref{eq:Hamflow} and \eqref{semi-exact} respectively, we have \begin{align*}
    z_{t_{k+1}} &= z_{t_k} + h w_{t_k} + \int_{t_k}^{t_{k+1}} \int_{t_k}^s [ \bar{F}_{t_k} - F_r ] dr ds \;,   \\
    w_{t_{k+1}} &= w_{t_k} + h \bar{F}_{t_k} - \int_{t_k}^{t_{k+1}} F_s ds \;. 
\end{align*}
By the triangle inequality,
\begin{align}
    |z_{t_{k+1}}| &\le |z_{t_k}| + h |w_{t_k}| + \int_{t_k}^{t_{k+1}} \int_{t_k}^s | \bar{F}_{t_k} - F_r| dr ds \;,  \label{z_tkp1} \\
    |w_{t_{k+1}}| &\le |w_{t_k}| + |h \bar{F}_{t_k} - \int_{t_k}^{t_{k+1}} F_s ds| \;. \label{w_tkp1} 
\end{align}
By the triangle inequality and \ref{A2}, \begin{align}
& | \bar{F}_{t_k} - F_r | \, \le \, | F_r - F_{t_k} | + | F_{t_k} - \bar{F}_{t_k} | = | \nabla U(q_r) - \nabla U(q_{t_k}) | + | F_{t_k} - \bar{F}_{t_k} |  \nonumber \\
& \quad \overset{\ref{A2}}{\le}  L |\int_{t_k}^r p_u du | + L ( |z_{t_k}| + h \sup_{s \le T} |\bar{p}_{s}| ) \nonumber \\
& \quad \le  L h \sup_{s \le T} |p_s| + L ( |z_{t_k} | + h \sup_{s \le T} |\bar{p}_{s}| ) \;.   \label{eq:dbarf1}  
\end{align}
Moreover, since the semi-exact flow incorporates the average of the potential force over each stratum,  \begin{align}
& | h \bar{F}_{t_k} - \int_{t_k}^{t_{k+1}} F_s ds |  \\
& \qquad \, \le \,  \left|\int\limits_{t_k}^{t_{k+1}} \Big[ \nabla U\Big(\bar{q}_{t_k} + (s- t_k) \bar{p}_{t_k}\Big) - \nabla U\Big(q_{t_k} + (s-t_k) p_{t_k} + \int\limits_{t_k}^s (s-r) F_r dr\Big) \Big] ds \right| \nonumber \\
& \qquad  \overset{\ref{A2}}{\le}   L ( h |z_{t_k}| + \frac{h^2}{2} |w_{t_k}| + \frac{h^3}{6} \sup_{s \le T} |  \nabla U(q_s) | ) \nonumber \\
& \qquad  \overset{\ref{A2}}{\le}   L \left( h |z_{t_k}| + \frac{h^2}{2} |w_{t_k}| + \frac{h^3}{6} ( L |x| + L T \sup_{s \le T} |p_s| ) \right) \;, \label{eq:dbarf2}
\end{align}
where in the last step we used 
\begin{align*}
\sup_{s \le T} |\nabla U(q_s)| \ &= \ \sup_{s \le T}  |\nabla U(x) + \nabla U(q_s) - \nabla U(x)|  \\
& \overset{\ref{A1},\ref{A2}}{\le}  L \, |x| + L \, \sup_{s \le T} |\int_0^s p_u du| \, \le \, L \, |x| + L T \, \sup_{s \le T} |p_s| \;. 
\end{align*}
Inserting \eqref{eq:dbarf1} and \eqref{eq:dbarf2} into  \eqref{z_tkp1} and \eqref{w_tkp1} respectively, yields \begin{align}
    |z_{t_{k+1}}| \, &\le \, (1+ \frac{1}{2} L h^2) |z_{t_k}| + h |w_{t_k}| +  L h^3 (\sup_{s \le T} |p_s| \vee \sup_{s \le T} |\bar{p}_s| )   \label{ieq:z_tkp1} \;, \\
    |w_{t_{k+1}}| \, &\le \, (1+ \frac{1}{2} L h^2) |w_{t_k}| + L h |z_{t_k}| + \frac{1}{6} L^2 h^3 (  |x| +  T \sup_{s \le T} |p_s| ) \;. 
    \label{ieq:w_tkp1}
\end{align}
Inserting \eqref{ieq:z_tkp1} and \eqref{ieq:w_tkp1} into \eqref{eq:ell1_rho_t} evaluated at $t=t_{k+1}$, and using $L^{1/2} h \le 1/2$, gives \begin{align}
  &  \rho_{t_{k+1}} \, \le \, (1+ \frac{5}{4} L^{1/2} h) \rho_{t_k} + \frac{1}{6} (L^{1/2} h)^3 |x|  + ( L^{1/2}h )^3 (  L^{-1/2} + \frac{1}{6} T )  ( \sup_{s \le T} |p_s| \vee \sup_{s \le T} |\bar{p}_s|  ) \;. \nonumber
\end{align}
By the discrete Gr\"{o}nwall's inequality (Lemma~\ref{lem:groenwall}), \begin{align}
 &   \rho_{t_{k}} \, \le \,  \frac{4}{5} e^{(5/4) L^{1/2} T} \left( \frac{1}{6} (L^{1/2} h)^2 |x| + ( L^{1/2}h )^2 (  L^{-1/2} + \frac{1}{6} T )  ( \sup_{s \le T} |p_s| \vee \sup_{s \le T} |\bar{p}_s|  ) \right)   \nonumber \\
& \quad \, \le \,    \frac{4}{5} e^{5/8} L^{1/2} h^2 \left( \frac{1}{6} L^{1/2} |x| + (1 + \frac{1}{6} L^{1/2} T)  ( \sup_{s \le T} |p_s| \vee \sup_{s \le T} |\bar{p}_s|  ) \right) \nonumber \\
& \quad \, \le \frac{4}{5} e^{5/8} L^{1/2}h^2 \left(\frac{1}{6} L^{1/2} |x| + \left( \frac{13}{12}\right) \left(\frac{5}{8} L^{1/2}|x| + \frac{37}{32} |v|\right) \right) \nonumber\\
& \quad \, \le \, \frac{4}{5} e^{5/8} L^{1/2} h^2 ( \frac{27}{32}  L^{1/2} |x| + \frac{481}{384} |v| )  
 \, \le \, 2 e^{5/8} L^{1/2}  (L^{1/2} |x| + |v|) h^2 \nonumber \; ,
\end{align}
which gives \eqref{semi_exact_accuracy} --- as required.  Note that in the last two steps we inserted the \emph{a priori} upper bound in \eqref{apriori:2} and applied the conditions $L T^2 \le 1/8$  and $L^{1/2} h \leq 1/2$.
\end{proof}

\pagebreak

\appendix

\section{Metropolis-Adjustable Randomized Time Integrators}

\label{sec:aHMC}

Here we present Metropolis-adjustable randomized time integrators, which are `adjustable' in the following sense.  

\begin{definition} \label{defn:adjustable}
Let $\mu_{BG} := \mu \otimes \mathcal{N}(0, I_d)$ be the Boltzmann-Gibbs distribution.
A randomized time integrator with  transition kernel $\tilde{\Pi}$ is \emph{adjustable} if there exists a  density $g$ on $\mathbb{R}^{4 d}$ such that: \[
\mu_{BG} (dq \, dv) \ \tilde{\Pi}( \mathcal{S}(q,v), \mathcal{S}(dq' \, dv') )   \ = \ g((q,v), (q',v')) \ \mu_{BG} (dq' \, dv') \,  \tilde{\Pi}( (q',v'), dq \, dv ) 
\] where $\mathcal{S}: (q,v) \mapsto (q,-v)$ is the velocity flip map.  
\end{definition}

In particular, the transition kernel $\tilde{\Pi}$ of an adjustable randomized time integrator can be used as a proposal distribution in a generalized Metropolis-Hastings algorithm with target measure $\mu_{BG}$ and transition kernel \[
\Pi((q,v), (dq' dv')) \ = \ \alpha ((q,v), (q',v')) \ \tilde{\Pi}((q,v), dq' dv') + r(q,v) \ \delta_{\mathcal{S}(q,v)}(dq' dv') 
\] where $\alpha((q,v), (q',v')) := \min(1, g((q',v'), (q,v)) )$.  It is straightforward to verify that the corresponding Metropolis-adjusted kernel $\Pi$ satisfies generalized detailed balance w.r.t.~$\mu_{BG}$, i.e., \[
\mu_{BG} (dq \, dv) \ \Pi( \mathcal{S}(q,v), \mathcal{S}(dq' \, dv') )   \ = \  \mu_{BG} (dq' \, dv') \  \Pi( (q',v'), dq \, dv ) 
\] and hence, leaves $\mu_{BG}$ invariant.  Since the exact Hamiltonian flow $\varphi_t(x,v):= (q_t(x,v), p_t(x,v))$ in \eqref{eq:Hamflow} is $\mathcal{S}$-reversible (i.e., $\varphi_{-t} = \mathcal{S} \circ \varphi_t \circ \mathcal{S}$) \cite{BoSaActaN2018}, the corresponding transition kernel $\Pi_{t}((x,v), \cdot) := \delta_{\varphi_t(x,v)}$ satisfies generalized detailed balance w.r.t.~$\mu_{BG}$.  A general class of adjustable randomized time integrators are provided by the following proposition. 

\begin{proposition}
Let $\{ \theta_i \}_{i \in \mathcal{I}}$ be an indexed family of time integrators for \eqref{eq:Hamflow} with index set $\mathcal{I}$.  Let $\rho$ be a probability distribution over $\mathcal{I}$.  For each $i \in \mathcal{I}$, suppose that $\theta_i$ is: (i) volume-preserving and (ii) $\mathcal{S}$-reversible.  Define     \[ \theta_{u_N\,.\,.\,u_1} \ \equiv \ \theta_{u_N} \circ \cdots \circ \theta_{u_2} \circ \theta_{u_1} \;, 
 \quad \text{where $u_j \in \mathcal{I}$ for each $j \in \{1, \dots, N \}$} \;.
  \]  Then the randomized time integrator with transition kernel 
  \[
  \tilde{\Pi}((q,v), \cdot) \ = \int_{\mathcal{I}^N} \delta_{ \theta_{u_N\,.\,.\,u_1}(q,v)} \prod_{i=1}^N \rho(d u_i)
  \] is adjustable with $g((q,v), (q',v')) = e^{-[H(q,v)-H(q',v')]}$.
\end{proposition}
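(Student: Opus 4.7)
The plan is to verify the adjustability equation in its weak form against bounded measurable test functions $f:\mathbb{R}^{2d}\times\mathbb{R}^{2d}\to\mathbb{R}$, i.e., to show
\[
\int f\bigl((q,v),(q',v')\bigr)\,\mu_{BG}(dq\,dv)\,\tilde{\Pi}\bigl(\mathcal{S}(q,v),\mathcal{S}(dq'\,dv')\bigr) = \int f\bigl((q,v),(q',v')\bigr)\, e^{-[H(q,v)-H(q',v')]}\,\mu_{BG}(dq'\,dv')\,\tilde{\Pi}\bigl((q',v'),dq\,dv\bigr).
\]
Writing $\mathbf{u}=(u_1,\dots,u_N)$ and $\theta_{\mathbf{u}}:=\theta_{u_N}\circ\cdots\circ\theta_{u_1}$, the transition kernel $\tilde{\Pi}$ is just an integral of Dirac masses against the symmetric product $\rho^{\otimes N}(d\mathbf{u})$, so both sides reduce to iterated integrals over $(q,v)$-space and $\mathbf{u}$-space.

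First I would establish the key algebraic identity
\[
\mathcal{S}\circ\theta_{\mathbf{u}}\circ\mathcal{S} = \theta_{\mathbf{u}^{\text{rev}}}^{-1}, \qquad \mathbf{u}^{\text{rev}}:=(u_N,\dots,u_1),
\]
by inserting $\mathcal{S}\circ\mathcal{S}=\operatorname{id}$ between consecutive factors and using the $\mathcal{S}$-reversibility assumption $\mathcal{S}\circ\theta_i\circ\mathcal{S}=\theta_i^{-1}$ for each step. Plugging this into the LHS: the Dirac mass at $\theta_{\mathbf{u}}(\mathcal{S}(q,v))$, evaluated on the image $\mathcal{S}(dq'\,dv')$, forces $(q',v')=\mathcal{S}\theta_{\mathbf{u}}\mathcal{S}(q,v)=\theta_{\mathbf{u}^{\text{rev}}}^{-1}(q,v)$, so the LHS becomes
\[
\int \rho^{\otimes N}(d\mathbf{u})\int \frac{e^{-H(q,v)}}{Z}\,f\bigl((q,v),\theta_{\mathbf{u}^{\text{rev}}}^{-1}(q,v)\bigr)\,dq\,dv.
\]

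Next I would handle the RHS. The Dirac at $\theta_{\mathbf{u}}(q',v')$ forces $(q,v)=\theta_{\mathbf{u}}(q',v')$, so the integrand becomes $e^{-[H(\theta_{\mathbf{u}}(q',v'))-H(q',v')]}f(\theta_{\mathbf{u}}(q',v'),(q',v'))$ weighted by $e^{-H(q',v')}/Z$. The two $H(q',v')$ contributions cancel, leaving $e^{-H(\theta_{\mathbf{u}}(q',v'))}/Z$. Because $\theta_{\mathbf{u}}$ is a composition of volume-preserving maps, hence itself volume-preserving, the change of variables $(q,v)=\theta_{\mathbf{u}}(q',v')$ has unit Jacobian and converts the RHS to
\[
\int \rho^{\otimes N}(d\mathbf{u})\int \frac{e^{-H(q,v)}}{Z}\,f\bigl((q,v),\theta_{\mathbf{u}}^{-1}(q,v)\bigr)\,dq\,dv.
\]

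Finally, LHS and RHS differ only by $\mathbf{u}\leftrightarrow\mathbf{u}^{\text{rev}}$ in the inner argument. Since $\rho^{\otimes N}$ is invariant under the reversal permutation $\mathbf{u}\mapsto\mathbf{u}^{\text{rev}}$, the substitution is measure-preserving and the two expressions coincide, completing the verification. Taking $f$ to vary shows equality as measures, which is the adjustability identity with $g((q,v),(q',v'))=e^{-[H(q,v)-H(q',v')]}$.

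The main obstacle is purely bookkeeping: correctly tracking that $\mathcal{S}$-conjugation reverses the order of the composition (producing $\mathbf{u}^{\text{rev}}$) and that the product measure $\rho^{\otimes N}$ absorbs this reversal. Once the identity $\mathcal{S}\theta_{\mathbf{u}}\mathcal{S}=\theta_{\mathbf{u}^{\text{rev}}}^{-1}$ is in hand, the argument is a two-line combination of volume preservation with the permutation invariance of $\rho^{\otimes N}$; no hypothesis beyond (i) and (ii) per $\theta_i$ and the i.i.d.\ structure of the random indices is needed.
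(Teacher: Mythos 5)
Your proof is correct and complete. The paper omits its own argument, declaring the proposition's proof ``straightforward'' — the direct computation you give (conjugation identity $\mathcal{S}\theta_{\mathbf{u}}\mathcal{S}=\theta_{\mathbf{u}^{\mathrm{rev}}}^{-1}$, Dirac evaluation on both sides, unit-Jacobian change of variables via volume preservation, and absorption of the order reversal by permutation invariance of $\rho^{\otimes N}$) is exactly the intended ``straightforward'' argument, with all the bookkeeping handled correctly.
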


The proof of this proposition is straightforward and therefore omitted.  As an application of this proposition, here is a concrete, implementable example of an adjustable randomized time integrator.  

\begin{example}[Adjustable Randomized Time Integrator]
\label{eg:2stage} 
Let $\mathcal{I} = [0,  1/2]$, $h >0$ be a time step size, $b \in \mathcal{I}$, and $\theta_b: \mathbb{R}^{2d} \to \mathbb{R}^{2d}$  be the following 2-stage palindromic integrator \begin{align}
&\theta_b(q,v) \ := \ \varphi^{(A)}_{b h} \circ \varphi^{(B)}_{h/2}  \circ \varphi^{(A)}_{(1-2 b) h} \circ  \varphi^{(B)}_{h/2} \circ \varphi^{(A)}_{b h} (q,v)  \;, \label{eq:2-stage}  \\
&\text{where} \quad  \varphi_t^{(A)}(x,v) \ := \ (x + t v, v) \;,~~  \text{and} \quad \varphi_t^{(B)}(x,v) \ := \ (x, v + t F(x)) \;. \nonumber  
\end{align}
More explicitly, the scheme can be written as \begin{align*}
 (x,v)\mapsto \left( x + h v + (1-b) \frac{h^2}{2} F_+ + b \frac{h^2}{2}  F_-,~ v + \frac{h}{2} [ F_+ + F_- ] \right) \;,
\end{align*}
where  $F_+ :=  F(x+b h v)$ and $F_-  :=   F\big(x+(1-b) h v + (1-2 b) \frac{h^2}{2} F_+ \big)$.
\end{example}

In general, this adjustable randomized time integrator requires two potential force evaluations per integration step.  However, for $b=0$ and $b=1/2$, the scheme reduces to the velocity and position Verlet schemes, respectively. In this example, the probability distribution $\rho$ could be  the uniform distribution over $\operatorname{Uniform}(0,1/2)$.  Alternatively, in order to average over position and velocity Verlet, $\rho$ could be $\operatorname{Uniform}\{0, 1/2 \}$.

\begin{algorithm3}[Adjusted HMC with Randomized Time Integrator]
\label{algo:adjusted_hmc}
Given \# of integration steps $N \in \mathbb{N}$, a probability measure $\rho$ on $\mathcal{I}$,  and the current state $(  Q_0^a,  V_0^a ) \in \mathbb{R}^{2d}$, the method outputs an updated state  $(  Q_1^a,  V_1^a ) \in \mathbb{R}^{2d}$ using:
\begin{description}
\item[Step 1] Draw independently $\{ \mathcal{U}_i \}_{i=1}^N \overset{i.i.d.}{\sim} \rho$, $\mathcal{V} \sim \operatorname{Uniform}(0,1)$, and $\xi \sim \mathcal{N}(0,I_d)$.
\item[Step 2] Set 
\begin{equation*}
 \hspace{-0.25in}
 (  Q_1^a,  V_1^a ) = \begin{cases}\theta_{\mathcal{U}_N\,.\,.\,\mathcal{U}_1}( Q_0^a, \xi )  & \text{if $\mathcal{V} \le \exp(-[H(\theta_{\mathcal{U}_N\,.\,.\,\mathcal{U}_1}( Q_0^a, \xi ) )-H(Q_0^a, \xi )]^+)$} \;, \\
 \mathcal{S}(Q_0^a, \xi ) & \text{otherwise} \;.
\end{cases} 
\end{equation*}
\end{description}
\end{algorithm3}

\section{Duration-Randomized uHMC with sMC Time Integration}

\label{sec:dr_uHMC}

Here we consider a duration-randomized uHMC algorithm with complete velocity refreshment (or \emph{randomized uHMC} for short).  In order to avoid periodicities in the Hamiltonian steps of HMC, duration randomization was suggested by Mackenzie in 1989 \cite{Ma1989}. 
There are a number of ways to incorporate duration randomization into uHMC \cite{CaLeSt2007,Ne2011, BoSa2017, BoSaActaN2018}. One overlooked way, which is perhaps the simplest to analyze, is the unadjusted Markov jump process on phase space introduced in \cite[Section 6]{BoSa2017}, as briefly recounted below.   

Before describing this process, we note that duration randomization has a similar effect as  partial velocity refreshment.  Intuitively speaking, after a random number of non-randomized uHMC transition steps with partial velocity refreshment, a complete refreshment occurs.  Therefore, the findings given below are expected to hold for  uHMC with partial velocity refreshment. However, in comparison to randomized uHMC, the analysis of  uHMC with partial velocity refreshment is a more  demanding task if the bounds have to be realistic with respect to model/hyper parameters.

\subsection{Definition of Randomized uHMC with sMC time integration}

The randomized uHMC process is an implementable, inexact MCMC method defined on \emph{phase space} $\mathbb{R}^{2d}$ and aimed at the Boltzmann-Gibbs distribution   \begin{equation}
\mu_{BG} := \mu \otimes \mathcal{N}(0, I_d) \;. 
\end{equation} 
First, we define the infinitesimal generator of the randomized uHMC process; and then describe how a path of this process can be realized.  

\smallskip

To define the infinitesimal generator, let $\mathcal{U} \sim \operatorname{Uniform}(0,h)$ and $\xi \sim \mathcal{N}(0,I_d)$  be independent random variables.   Denote by $\Theta_h(x,v,\mathcal{U})$ a single step of the sMC time integrator operated with time step size $h>0$ and initial condition $(x,v) \in \mathbb{R}^{2 d}$ where $\Theta_h: \mathbb{R}^{2 d} \times (0,1) \to \mathbb{R}^{2 d}$ is the deterministic map defined by \begin{equation} \label{eq:Theta_h}
\Theta_h : (x,v,u) \mapsto \Big( x + h v + \frac{h^2}{2} F(x+u v), v + h F(x+u v) \Big) \;. 
\end{equation} 
   On functions $f:\mathbb{R}^{2 d} \to \mathbb{R}^{2 d}$, the infinitesimal generator of the randomized uHMC process is defined by
\begin{equation}\label{eq:generator1}
 \tilde{\mathcal{G}} f(x,v) = 
  h^{-1} E \left(  f(\Theta_h(x,v,\mathcal{U})) - f(x,v) \right)  +
 \lambda E \left( f( x,\xi ) - f(x,v) \right)  \;, 
\end{equation}
where $\lambda>0$ is the intensity of velocity randomizations and $h>0$ is the step size.  The operator $\tilde{\mathcal{G}}$ is the generator of a Markov jump process $(\tilde{Q}_t, \tilde{P}_t)_{t \ge 0}$  with jumps that result in either: (i) a step of the sMC time integrator $\Theta_h$; or (ii) a complete velocity refreshment $(x,v) \mapsto (x,\xi)$.  Due to time-discretization error in the sMC steps, this process has an asymptotic bias.  Since the number of jumps of the process over $[0,t]$ is a Poisson process with intensity $\lambda + h^{-1}$, the mean number of steps  of $\Theta_h$ (and hence, gradient evaluations) over a time interval of length $t>0$ is $t/h$.    

\smallskip

The random jump times and embedded chain of the randomized uHMC process may be produced by iterating the following algorithm.  

\smallskip

\begin{algorithm3*}[Randomized uHMC]
Given intensity $\lambda > 0 $, step size $h>0$, the current time $T_0$, and the current state $( \tilde Q_{T_0}, \tilde V_{T_0} ) \in \mathbb{R}^{2d}$, the method outputs an updated state  $( \tilde Q_{T_1}, \tilde V_{T_1} ) \in \mathbb{R}^{2d}$ at the random time $T_1$ using:
\begin{description}
\item[Step 1] Draw an exponential random variable $\Delta T$ with mean $h/(\lambda h + 1)$, and update time via $T_1 = T_0 + \Delta T$.
\item[Step 2] Draw $\xi \sim \mathcal{N}(0,I_d)$, $\mathcal{U} \sim \operatorname{Uniform}(0,h)$, $\mathcal{V} \sim \operatorname{Uniform}(0,1)$, and set 
\begin{align*}
& ( \tilde Q_{T_1}, \tilde V_{T_1} ) = \begin{dcases}
(\tilde Q_{T_0}, \xi ) & \mathcal{V} \leq \frac{\lambda h}{1+ \lambda h}\\
\Theta_h(\tilde Q_{T_0}, \tilde V_{T_0}, \mathcal{U} ) &  \text{otherwise} \;.
\end{dcases}
\end{align*}
\end{description}
Note: the random variables $\Delta T$, $\xi$, $\mathcal{V}$, and $\mathcal{U}$ are mutually independent and independent of the state of the process.  
\end{algorithm3*}

Let $\{ T_i \}_{i \in \mathbb{N}_0}$ and $\{ (\tilde Q_{T_i}, \tilde V_{T_i} ) \}_{i \in \mathbb{N}_0}$ denote the sequence of random jump times  and states  obtained by iterating this algorithm respectively.
The path of the randomized uHMC process is then given by \[
(Q_t, p_t) = ( \tilde Q_{T_i}, \tilde V_{T_i} )  \quad \text{for  $t \in [T_i, T_{i+1})$} \;.
\]
Moreover, for any $t>0$, the time-average of an observable $f: \mathbb{R}^{2d} \to \mathbb{R}$ along this trajectory is  given by: \[
\frac{1}{t} \int_0^t f(Q_s, p_s) ds = \frac{1}{t} \sum_{0 \le i \le \infty} f(Q_{T_i}, V_{T_i} ) (t \wedge T_{i+1} - t \wedge T_i )\;.
\]

Let $\theta_h: \mathbb{R}^{2 d} \times (0,1) \to \mathbb{R}^{2 d}$ denote the map that advances the exact solution of the Hamiltonian dynamics over a single time step of size $h>0$, i.e., \[
\theta_h: (x,v) \mapsto \Big( x + h v + \int_0^h (h-s) F(q_s(x,v)), v + \int_0^h F(q_s(x,v))  ds \Big) \;.
\]  In the asymptotic bias proof, we couple the randomized uHMC process to a corresponding \emph{exact}  process $(Q_t, p_t)_{t \ge 0}$ with generator defined by:  \begin{equation} 
\begin{aligned}
& \mathcal{G} f(x,v) =   h^{-1} E \left(  f(\theta_h(x,v)) - f(x,v) \right)  \\
& \qquad + \lambda E \left( f( x,\xi ) - f(x,v) \right)  \;. 
\end{aligned}
\end{equation} A key property of the exact process is that it leaves infinitesimally invariant the Boltzmann-Gibbs distribution $\mu_{BG}$, and under our regularity assumptions on the target measure $\mu$, it can be verified that $\mu_{BG}$ is also the unique invariant measure of the exact process.

\subsection{$L^2$-Wasserstein Contractivity of Randomized uHMC}

Let $(\tilde{p}_t)_{t \ge 0}$ denote the transition semigroup of the randomized uHMC process $((\tilde{Q}_t, \tilde{P}_t))_{t \ge 0}$.  A key tool in the contraction proof is a coupling of two copies of the randomized uHMC process with generator defined by \begin{equation} 
\label{eq:GC_contr}
\begin{aligned}
& \tilde{\mathcal{G}}^C f(y) \, = \,  h^{-1} E \left(  f(\Theta_h(x,v,\mathcal{U}),\Theta_h(\tilde{x},\tilde{v},\mathcal{U})) - f(y) \right)  \\
 & \qquad +  \lambda E \left( f( (x,\xi), (\tilde{x}, \xi) ) - f( y ) \right)
 \end{aligned}
\end{equation}
where $y = ((x,v), (\tilde{x}, \tilde{v})) \in \mathbb{R}^{4 d}$. 
This coupling proof parallels the coupling approach in Ref.~\cite{BoEb2022} for the PDMP corresponding to duration-randomized \emph{exact} HMC; another strategy to prove contractivity is based on hypocoercivity \cite{lu2022explicit}.   

\medskip

\smallskip

To measure the distance between the two copies we use a distorted metric: \begin{equation}
\label{eq:dist_metric}
\begin{aligned}
& \rho( y )^2  :=  \frac{1}{4} |z|^2 + \frac{\lambda^{-1}}{2}   \langle z, w \rangle + \lambda^{-2} |w|^2  =  \begin{pmatrix} z & w \end{pmatrix} \mathsf{A} \begin{pmatrix} z \\ w \end{pmatrix} \;, \quad \text{where} \\
& y = ((x,v), (\tilde{x}, \tilde{v})) \in \mathbb{R}^{4 d} \;, ~~
z := x- \tilde{x} \;, ~~ w := v- \tilde{v} \;, ~~
\mathsf{A}  :=  \begin{bmatrix} \frac{1}{4} \mathbf{1}_{d} & \frac{\lambda^{-1}}{4} \mathbf{1}_{d} \\
\frac{\lambda^{-1}}{4}  \mathbf{1}_{d} & \lambda^{-2} \mathbf{1}_{d} \end{bmatrix} \;. 
\end{aligned}
\end{equation}
This distorted metric involves the ``$qv$ trick'' behind Foster-Lyapunov functions for (i) dissipative Hamiltonian systems with random impulses \cite{SaSt1999}; (ii) second-order Langevin processes \cite{wu2001large,MaStHi2002,Ta2002,achleitner2015large}; and (iii) exact randomized HMC \cite{BoSa2017}.  This cross-term plays a crucial role since it captures the contractivity of the potential force.  Using the Peter-Paul inequality with parameter $\delta$, we can compare this distorted metric to a `straightened' metric, \begin{align}
\rho(y)^2 \, &\le \, \left( \frac{1}{4} + \frac{ \lambda^{-1} \delta}{4} \right) |z|^2 + \left( \lambda^{-2} + \frac{\lambda^{-1}}{4 \delta}  \right) |w|^2 \, \overset{\delta=3\lambda}{\le} \, |z|^2 + \frac{13}{12} \lambda^{-2}   |w|^2  \label{eq:straight_metric0} \\
\, &\le \, \max(K^{-1}, \frac{13}{12} \lambda^{-2}) \left( K |z|^2 + |w|^2  \right) \;. \label{eq:straight_metric}
\end{align}  Similarly, the distorted metric is equivalent to the standard Euclidean metric \begin{equation} \label{eq:equiv_dist_metric}
\frac{1}{8} \min(1, 4 \lambda^{-2}) \left( |z|^2 +  |w|^2 \right) \, \le \, \rho( y )^2 \, \le \,   \max(1, \frac{13}{12} \lambda^{-2}) \left( |z|^2 +  |w|^2 \right) \;.
\end{equation}  

\smallskip

By applying the generator $\tilde{\mathcal{G}}^C $ on this distorted metric, and using the co-coercivity property of $\nabla U$ (see Remark~\ref{rmk:cocoercivity}), we can prove the following.

\smallskip

\begin{lemma}
\label{lem:inf_contr_mjp}
Suppose that Assumptions~\ref{A1}-\ref{A3} hold and $\lambda>0$, $h>0$ satisfy \begin{align}
(L \lambda^{-2} )^{1/2} \, &\le \, 12^{-1}  \label{eq:Cl} \;, \\
\lambda h \, &\le \,  1 \;. \label{eq:Ch}
\end{align}
Then $\tilde{\mathcal{G}}^C$ satisfies the following infinitesimal contractivity result 
\begin{equation} \label{eq:inf_contr_mjp}
 \tilde{\mathcal{G}}^C \rho^2 \, \le \, - \, \gamma \, \rho^2 \;,  \quad \text{where $\gamma \, := \, 10^{-1} \frac{K}{\lambda} $}  \;. 
\end{equation}
\end{lemma}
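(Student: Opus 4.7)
The plan is to split $\tilde{\mathcal{G}}^C$ into its two jump-contributions---velocity refresh (intensity $\lambda$) and one step of the synchronously coupled sMC integrator (intensity $h^{-1}$)---and then to extract a quadratic dissipation from strong convexity and co-coercivity applied at the common random midpoint. A refresh sends $(z,w):=(x-\tilde x,v-\tilde v)\mapsto (z,0)$, contributing exactly $-\tfrac{1}{2}\langle z,w\rangle - \lambda^{-1}|w|^2$ to $\tilde{\mathcal{G}}^C \rho^2$. For the integrator step, the synchronously coupled increments are $z'=z+hw+\tfrac{h^2}{2}\Delta F$ and $w'=w+h\Delta F$, where $\Delta F=-g(\mathcal{U})$ and $g(u):=\nabla U(x+uv)-\nabla U(\tilde x+u\tilde v)$. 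Expanding $\rho(y')^2-\rho(y)^2$, dividing by $h$, and taking expectation over $\mathcal{U}\sim\mathrm{Uniform}(0,h)$, the $\tfrac{1}{2}\langle z,w\rangle$ piece cancels the corresponding refresh term, leaving
\begin{equation*}
\tilde{\mathcal{G}}^C\rho^2 \;=\; \Bigl(\tfrac{h}{4}-\tfrac{1}{2\lambda}\Bigr)|w|^2 \;-\; E\langle az+bw,\,g(\mathcal{U})\rangle \;+\; c\,E|g(\mathcal{U})|^2,
\end{equation*}
where $a=\tfrac{h}{4}+\tfrac{1}{2\lambda}$, $b=\tfrac{h^2}{4}+\tfrac{3h}{4\lambda}+\tfrac{2}{\lambda^2}$ and $c=\tfrac{h^3}{16}+\tfrac{h^2}{4\lambda}+\tfrac{h}{\lambda^2}$. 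Under \eqref{eq:Ch} one checks $\tfrac{h}{4}-\tfrac{1}{2\lambda}\le -\tfrac{1}{4\lambda}$, $\tfrac{1}{2\lambda}\le a\le\tfrac{3}{4\lambda}$, $b\le 3\lambda^{-2}$, and $c\le 2\lambda^{-3}$.

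The central nonnegative quantity is the dissipation $\alpha:=E\langle z+\mathcal{U}w,\,g(\mathcal{U})\rangle$. Strong convexity \ref{A3} applied pointwise in $\mathcal{U}$ yields
\begin{equation*}
\alpha \;\ge\; K\,E|z+\mathcal{U}w|^2 \;=\; K\bigl(|z|^2+h\langle z,w\rangle+\tfrac{h^2}{3}|w|^2\bigr),
\end{equation*}
while co-coercivity (Remark~\ref{rmk:cocoercivity}) yields $E|g(\mathcal{U})|^2\le L\alpha$. Decomposing $az+bw=a(z+\mathcal{U}w)+(b-a\mathcal{U})w$ gives $E\langle az+bw,g(\mathcal{U})\rangle = a\alpha+\langle w,E[(b-a\mathcal{U})g(\mathcal{U})]\rangle$. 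Since $|b-a\mathcal{U}|\le b\le 3\lambda^{-2}$ on $\{\mathcal{U}\in(0,h)\}$, Cauchy--Schwarz, co-coercivity, and Young's inequality with weight $\tfrac{a}{2}$ produce $|\langle w,E[(b-a\mathcal{U})g(\mathcal{U})]\rangle|\le \tfrac{a}{4}\alpha+\tfrac{9L\lambda^{-4}}{a}|w|^2\le \tfrac{a}{4}\alpha+\tfrac{1}{8\lambda}|w|^2$, where the last step uses $a\ge\tfrac{1}{2\lambda}$ together with the smallness assumption $L\lambda^{-2}\le\tfrac{1}{144}$ from \eqref{eq:Cl}. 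In the same way, $c\,E|g(\mathcal{U})|^2\le 2L\lambda^{-3}\alpha\le\tfrac{1}{72\lambda}\alpha$. Combining everything and discarding the nonnegative $\tfrac{Kh^2}{9\lambda}|w|^2$ part of $\alpha$ yields
\begin{equation*}
\tilde{\mathcal{G}}^C\rho^2 \;\le\; -\tfrac{1}{8\lambda}|w|^2 - \tfrac{1}{3\lambda}\alpha \;\le\; -\tfrac{K}{3\lambda}|z|^2 - \tfrac{Kh}{3\lambda}\langle z,w\rangle - \tfrac{1}{8\lambda}|w|^2.
\end{equation*}

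The final step is to compare this with $\gamma\rho^2=\tfrac{K}{10\lambda}\bigl(\tfrac{1}{4}|z|^2+\tfrac{1}{2\lambda}\langle z,w\rangle+\tfrac{1}{\lambda^2}|w|^2\bigr)$. The $|z|^2$-coefficient $-\tfrac{K}{3\lambda}$ easily dominates $-\tfrac{K}{40\lambda}$, and using \eqref{eq:Cl} the $|w|^2$-coefficient obeys $-\tfrac{1}{8\lambda}+\tfrac{K}{10\lambda^3}\le -\tfrac{1}{9\lambda}$. The cross-term coefficient $\tfrac{K}{20\lambda^2}-\tfrac{Kh}{3\lambda}$ (which can have either sign depending on $h$) is absorbed via a Peter--Paul inequality with weight $\sigma=\lambda$, drawing a small fraction of the excess dissipation from both $|z|^2$ and $|w|^2$. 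The main technical obstacle throughout is this balancing act: the error contributions from the mismatch $(b-a\mathcal{U})w$ and from $c\,E|g|^2$ must be tamed by the smallness assumption $(L\lambda^{-2})^{1/2}\le \tfrac{1}{12}$, and the Young weights have to be calibrated so that neither the $|z|^2$ nor the $|w|^2$ dissipation is exhausted when reabsorbing the residual cross term.
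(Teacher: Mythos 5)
Your proposal is correct and takes essentially the same approach as the paper: after expanding $\tilde{\mathcal{G}}^C\rho^2$ exactly, you isolate the dissipation $\alpha = E\langle z+\mathcal{U}w,\,g(\mathcal{U})\rangle$ at the common random midpoint (the paper's ``add and subtract'' trick with $Z_{\mathcal{U}}$), apply strong convexity \ref{A3} and co-coercivity \eqref{eq:cocoercive}, and control the residual $(b-a\mathcal{U})w$ and $c\,E|g|^2$ terms by Young/Peter--Paul under \eqref{eq:Cl}--\eqref{eq:Ch}. The only cosmetic difference is the last step, where you compare coefficients of $|z|^2$, $\langle z,w\rangle$, $|w|^2$ directly and absorb the cross term via a discriminant/Peter--Paul check, whereas the paper passes through the straightened-metric inequality \eqref{eq:straight_metric}; both close the argument.
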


The proof of Lemma~\ref{lem:inf_contr_mjp} is deferred to Section~\ref{sec:randomized_uHMC_proofs}.  As a consequence of Lemma~\ref{lem:inf_contr_mjp}, we can prove $L^2$ Wasserstein contractivity of the transition semigroup $(\tilde{p}_t)_{t \ge 0}$ of randomized uHMC.  

\begin{theorem}
\label{thm:contr_mjp}
Suppose that Assumptions~\ref{A1}-\ref{A3} hold and $\lambda>0$, $h>0$ satisfy \eqref{eq:Cl} and \eqref{eq:Ch}, respectively.  Then
 for any pair of probability measures $\nu, \eta \in \PM^2(\mathbb{R}^{2d})$, and for any $t \ge 0$, \begin{equation}
\W^2(\nu \tilde{p}_t, \eta \tilde{p}_t) \, \le \, 3 \, \max(\lambda, \lambda^{-1}) \, e^{-\gamma \, t \, /2 } \, \W^2(\nu, \eta) \;.
\end{equation}
\end{theorem}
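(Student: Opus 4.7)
The plan is to combine the infinitesimal contractivity of the distorted metric $\rho^2$ (Lemma~\ref{lem:inf_contr_mjp}) with a Dynkin-type argument for the coupled Markov jump process, and then convert the resulting bound on $E[\rho(Y_t)^2]$ into a bound on the standard $L^2$-Wasserstein distance using the equivalence \eqref{eq:equiv_dist_metric} of $\rho$ to the Euclidean metric on $\mathbb{R}^{2d}$.

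More concretely, first I would fix $\nu,\eta \in \PM^2(\mathbb{R}^{2d})$ and an arbitrary coupling $\omega \in \J(\nu,\eta)$. I would then realize $Y_t = ((X_t,V_t),(\tilde X_t,\tilde V_t))$ as the Markov jump process on $\mathbb{R}^{4d}$ with generator $\tilde{\mathcal{G}}^C$ in \eqref{eq:GC_contr} initialized from $Y_0 \sim \omega$. The key point is that under $\tilde{\mathcal{G}}^C$ both marginals are copies of the randomized uHMC process with transition semigroup $(\tilde p_t)_{t \ge 0}$, because the jumps of the two marginals use a common Poisson clock and share the auxiliary randomness $(\mathcal{U},\xi)$ at each jump; this is what makes $\law(X_t,V_t) = \nu \tilde p_t$ and $\law(\tilde X_t,\tilde V_t) = \eta \tilde p_t$, so the pair at time $t$ furnishes an admissible coupling in $\J(\nu \tilde p_t,\eta \tilde p_t)$.

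Next, since $\rho^2$ is a bounded-degree polynomial and the jump rates $h^{-1}+\lambda$ are finite, Dynkin's formula gives that $\rho(Y_t)^2 - \int_0^t \tilde{\mathcal{G}}^C \rho^2(Y_s)\,ds$ is a local martingale; a standard stopping/monotone-convergence argument (using that the second moments of both marginals propagate to all times under $\tilde p_t$) upgrades this so that $u(t) := E[\rho(Y_t)^2]$ is absolutely continuous with $u'(t) = E[\tilde{\mathcal{G}}^C \rho^2(Y_t)]$. Plugging in Lemma~\ref{lem:inf_contr_mjp} yields $u'(t) \le -\gamma\, u(t)$, and Gr\"{o}nwall's inequality delivers
\begin{equation*}
E[\rho(Y_t)^2] \, \le \, e^{-\gamma t}\, E[\rho(Y_0)^2].
\end{equation*}

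Finally, I would translate this into the desired $\W^2$ bound. The lower bound in \eqref{eq:equiv_dist_metric} gives $|X_t - \tilde X_t|^2 + |V_t - \tilde V_t|^2 \le 8\,\max(1,\lambda^2/4)\,\rho(Y_t)^2$, while the upper bound gives $\rho(Y_0)^2 \le \max(1,\tfrac{13}{12}\lambda^{-2})\,(|X_0 - \tilde X_0|^2 + |V_0 - \tilde V_0|^2)$. Combining these and checking the two regimes $\lambda \ge 1$ and $\lambda \le 1$ separately shows that the resulting prefactor is bounded by $9\,\max(\lambda^2,\lambda^{-2})$, hence after taking square roots the constant is at most $3\,\max(\lambda,\lambda^{-1})$. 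Taking the infimum over $\omega$ and using the coupling characterization of $\W^2$ yields the stated inequality. I do not expect any real obstacle here: the infinitesimal contractivity is the nontrivial input (already handled in Lemma~\ref{lem:inf_contr_mjp}), the Dynkin step is routine for this bounded-rate jump process, and the only care needed is in the bookkeeping of the prefactor arising from the distortion of $\rho$ relative to the Euclidean metric.
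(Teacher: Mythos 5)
Your proposal is correct and follows essentially the same route as the paper: realize a synchronous coupling of two randomized uHMC processes generated by $\tilde{\mathcal{G}}^C$, deduce from Lemma~\ref{lem:inf_contr_mjp} that $E[\rho(Y_t)^2]\le e^{-\gamma t}E[\rho(Y_0)^2]$ (the paper phrases this as $e^{\gamma t}\rho(Y_t)^2$ being a supermartingale, which is the same Dynkin/Gr\"onwall argument), and then convert via the metric equivalence \eqref{eq:equiv_dist_metric} with the identical $9\max(\lambda^2,\lambda^{-2})$ bookkeeping. The only cosmetic difference is that you start from an arbitrary coupling and take the infimum at the end, whereas the paper initializes directly from an optimal $\W^2$-coupling.
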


\begin{proof}[Proof of Theorem~\ref{thm:contr_mjp}]
Let $(Y_t)_{t \ge 0}$ denote the coupling process  on $\mathbb{R}^{4 d}$ generated by $ \tilde{\mathcal{G}}^C $ with initial distribution given by an optimal coupling of the initial distributions $\nu$ and $\eta$ w.r.t.\ the distance $\W^2$.  
As a consequence of Lemma~\ref{lem:inf_contr_mjp}, the process $t \mapsto e^{\gamma t} \rho(Y_t)^2$ is a non-negative supermartingale.  Moreover, by using the equivalence to the standard Euclidean metric given in \eqref{eq:equiv_dist_metric}, \begin{align*}
\W^2(\nu \tilde{p}_t, \eta \tilde{p}_t)^2 \, &\le \, 8 \max(1, \frac{1}{4} \lambda^{2}) E[ \rho(Y_t)^2 ] \, \overset{\eqref{eq:inf_contr_mjp}}{\le} 8 \max(1, \frac{1}{4} \lambda^{2}) e^{- \gamma t} E[ \rho(Y_0)^2 ] \\ 
\, &\le \, 8 \, \max(1, \frac{1}{4} \lambda^{2}) \, \max(1, \frac{13}{12} \lambda^{-2} ) \, e^{-\gamma t} \, \W^2(\nu, \eta)^2 \\ 
\, &\le \, 9 \, \max( \lambda^2, \lambda^{-2} ) \, e^{-\gamma t} \, \W^2(\nu, \eta)^2 \;.
\end{align*}  Taking square roots of both sides gives the required result.  
\end{proof}

\subsection{$L^2$-Wasserstein Asymptotic Bias of Randomized uHMC}

As a consequence of Theorem~\ref{thm:contr_mjp}, the randomized uHMC process admits a unique invariant measure denoted by $\tilde{\mu}_{BG}$.  Here we quantify the $L^2$-Wasserstein asymptotic bias, i.e., $\W^2(\mu_{BG}, \tilde{\mu}_{BG})$.  
 A key tool in the asymptotic bias proof is a coupling of the unadjusted and exact processes with generator \begin{equation}
\label{eq:AC_asympbias}
\begin{aligned}
& \mathcal{A}^C f(y) \, = \,  h^{-1} E \left(  f(\theta_h(x,v),\Theta_h(\tilde{x},\tilde{v},\mathcal{U})) - f(y) \right)  \\
& \qquad + \lambda E \left( f( (x,\xi), (\tilde{x}, \xi) ) - f(y) \right) \;,
 \end{aligned}
\end{equation}
where $y = ((x,v), (\tilde{x}, \tilde{v})) \in \mathbb{R}^{4 d}$.  

\begin{lemma}
\label{lem:inf_drif_mjp}
Suppose that Assumptions~\ref{A1}-\ref{A3} hold and $\lambda>0$ and $h>0$ satisfy \eqref{eq:Cl} and \eqref{eq:Ch}.  Let $\gamma$ be the contraction rate of randomized uHMC in \eqref{eq:inf_contr_mjp}.
Then $\mathcal{A}^C$ satisfies the following infinitesimal drift condition  \begin{equation} \label{eq:inf_drif_mjp}
\mathcal{A}^C \rho(y)^2  \, \le \, - \, \frac{\gamma}{2} \, \rho(y)^2 \, + \, \frac{1}{2} \left(1+ \frac{1}{K \lambda^{-2}} \right) \, \left(  L^{3/2} h |x|^2 +  |v|^2 \right) \, L \, h^3  \;,
\end{equation}
for all $y = ((x,v), (\tilde{x}, \tilde{v})) \in \mathbb{R}^{4 d}$.  
\end{lemma}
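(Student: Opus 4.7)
The plan is to decompose $\mathcal{A}^C = \tilde{\mathcal{G}}^C + (\mathcal{A}^C - \tilde{\mathcal{G}}^C)$, apply Lemma~\ref{lem:inf_contr_mjp} to the first term, and control the difference by a single-step $L^2$-accuracy estimate for the sMC integrator relative to the exact flow. Because the velocity-refreshment parts of $\mathcal{A}^C$ and $\tilde{\mathcal{G}}^C$ are identical, they cancel, leaving only the ``flow'' jump of rate $h^{-1}$. Letting $e := \theta_h(x,v) - \Theta_h(x,v,\mathcal{U}) \in \mathbb{R}^{2d}$ denote the single-step local truncation error of sMC with respect to the exact flow, and $\Xi := \Theta_h(x,v,\mathcal{U}) - \Theta_h(\tilde{x},\tilde{v},\mathcal{U})$ the post-sMC coupling displacement, the quadratic structure of $\rho^2$ gives
\[
(\mathcal{A}^C - \tilde{\mathcal{G}}^C)\rho^2(y) \, = \, h^{-1} E\bigl[\rho^2(\Xi + e) - \rho^2(\Xi)\bigr] \, = \, h^{-1} E\bigl[2\langle \mathsf{A}^{1/2}\Xi, \mathsf{A}^{1/2} e\rangle + \rho^2(e)\bigr].
\]

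Next, I apply Young's inequality with parameter $\delta > 0$ to obtain $\rho^2(\Xi + e) - \rho^2(\Xi) \le \delta \rho^2(\Xi) + (1 + \delta^{-1})\rho^2(e)$. A Lipschitz-type bound for $\Theta_h$ (which follows from \ref{A2} together with the hypotheses $\lambda h \le 1$ and $L\lambda^{-2} \le 1/144$, in the spirit of the a priori bounds in Lemma~\ref{lem:apriori}) gives $\rho^2(\Xi) \le C \rho^2(y)$ pointwise in $\mathcal{U}$. Choosing $\delta$ of order $\gamma h$ so that $\delta h^{-1} C \rho^2(y)$ is absorbed into exactly half of the contraction rate $-\gamma \rho^2(y)$ provided by Lemma~\ref{lem:inf_contr_mjp} produces
\[
\mathcal{A}^C \rho^2(y) \, \le \, -\tfrac{\gamma}{2}\, \rho^2(y) \, + \, h^{-1}(1 + \delta^{-1})\, E\rho^2(e),
\]
with $\delta^{-1}$ of order $\lambda/(Kh)$.

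The last ingredient is a single-step $L^2$-accuracy estimate for the sMC integrator yielding $E\rho^2(e) \lesssim (|v|^2 + L|x|^2)\, L h^{\text{high}}$, whose $h$-exponent matches the stated bound after multiplication by $h^{-1}(1+\delta^{-1})$. This is derived by a one-step version of the two-step argument in the proof of Lemma~\ref{lem:smc_strong_accuracy}: first, bound the deterministic discrepancy between the exact flow and the semi-exact flow over one step (as in the proof of Lemma~\ref{lem:semi_exact_accuracy}); second, bound the zero-mean discrepancy between the semi-exact flow and the sMC step within the single stratum (as in the proof of Lemma~\ref{lem:smc_strong_accuracy_semi_exact}), exploiting $\operatorname{Var}_{\mathcal{U}}(\nabla U(x + \mathcal{U} v)) \lesssim L^2 h^2 |v|^2$. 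Combining this with $\delta^{-1} \sim \lambda/(Kh)$ and the $\lambda^{-2}$ weighting inside $\rho^2$ produces the prefactor $\tfrac{1}{2}(1 + 1/(K\lambda^{-2}))$ and the dominant $Lh^3$ scaling.

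The main obstacle is the sharp single-step $L^2$-accuracy estimate: per-step inference from the global multi-step bound of Lemma~\ref{lem:smc_strong_accuracy} is one power of $h$ too coarse to yield the claimed $Lh^3$ prefactor, so a dedicated one-step analysis is required. This analysis must simultaneously exploit the unbiasedness of the random sMC force within a single stratum (to cancel $O(h^2)$ mean contributions to $e$) and the $\lambda$-weighted structure of the distorted metric $\rho^2$ (to produce exactly the factor $1 + 1/(K\lambda^{-2})$ rather than a weaker dimensional combination of $\lambda$ and $K$).
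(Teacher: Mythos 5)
Your plan correctly identifies the decomposition $\mathcal{A}^C = \tilde{\mathcal{G}}^C + (\mathcal{A}^C - \tilde{\mathcal{G}}^C)$, and your formula
$(\mathcal{A}^C - \tilde{\mathcal{G}}^C)\rho^2(y) = h^{-1} E\bigl[2\langle \mathsf{A}\Xi, e\rangle + \rho^2(e)\bigr]$
is exactly right, with $\Xi_q = z + hw + \tfrac{h^2}{2}\Delta F_{\mathcal{U}}$, $\Xi_p = w + h\Delta F_{\mathcal{U}}$, $e_q = \tfrac{h^2}{2}\Delta F_1$, $e_p = h\Delta F_2$ (in the paper's notation). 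The paper's proof in fact organizes its terms around the same $\Gamma_0 + \Lambda_0 + (\text{discretization})$ split, so up to this point you are on the paper's track.

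The gap is in your next step. Applying Young's inequality to the random quantity $\rho^2(\Xi + e) - \rho^2(\Xi) \le \delta\rho^2(\Xi) + (1+\delta^{-1})\rho^2(e)$ and only then taking $E[\cdot]$ discards the unbiasedness you yourself flag as essential. Concretely, after your step the remaining error is $h^{-1}(1 + \delta^{-1})E\rho^2(e)$ with $\delta^{-1} \sim \lambda/(Kh)$; the dominant piece of $E\rho^2(e)$ is $\lambda^{-2}E|e_p|^2 = \lambda^{-2}h^2 E|\Delta F_2|^2 = O(\lambda^{-2}L^2 h^4)$, so your error bound is of order $\lambda^{-2}L^2h^4 \cdot \lambda/(Kh^2) = L^2h^2/(K\lambda)$. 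Compared to the target $\sim (\lambda^2/K)\,Lh^3$, the ratio is $L\lambda^{-2}/(\lambda h)$, which can be arbitrarily large when $\lambda h \ll 1$ (the hypotheses only give $\lambda h \le 1$). You later say the unbiasedness should be used to ``cancel $O(h^2)$ mean contributions to $e$'' in the one-step $L^2$ estimate, but $E\rho^2(e)$ is a second-moment (variance-type) quantity: unbiasedness does nothing for it, and it is genuinely of order $h^4$, not $h^5$.

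The cancellation has to be exploited in the cross term $E\langle \mathsf{A}\Xi, e\rangle$ \emph{before} any Peter--Paul step. The leading parts of $\Xi$, namely $z$ and $w$, are deterministic in $\mathcal{U}$, so $E\langle w, e_p\rangle = \langle w, E e_p\rangle = h\langle w, E\Delta F_2\rangle$ and similarly for $\langle z, e_p\rangle$; here $|E\Delta F_2| = O(L^2h^2)$ while $(E|\Delta F_2|^2)^{1/2} = O(Lh)$, a full factor of $Lh$ sharper. Only after pulling out the expectation is Peter--Paul applied, with the resulting remainder $|E\Delta F_2|^2 = O(L^4h^4)$ rather than $E|\Delta F_2|^2 = O(L^2h^2)$. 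That is exactly what the paper does (the terms $E\langle z, \Delta F_2\rangle$ and $E\langle w, \lambda^{-1}\Delta F_2\rangle$ in its discretization error, which carry $O(1)$ coefficients, are handled via $|E\Delta F_2|^2$; terms with $O(h)$ coefficients can afford the coarser $E|\Delta F_1|^2$). A blanket Young bound on $\rho^2(\Xi+e) - \rho^2(\Xi)$ cannot distinguish these two regimes and gives a bound one power of $(\lambda h)^{-1}$ too weak to match the lemma.
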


The proof of Lemma~\ref{lem:inf_drif_mjp} is deferred to Section~\ref{sec:randomized_uHMC_proofs}.  Let $(p_t)_{t \ge 0}$ denote the transition semigroup of the exact process $((Q_t, p_t))_{t \ge 0}$.  We are now in position to quantify the asymptotic bias of randomized uHMC with sMC time integration.

\begin{theorem}
\label{thm:asympbias_mjp}
Suppose that Assumptions~\ref{A1}-\ref{A3} hold and $\lambda>0$ and $h>0$ satisfy \eqref{eq:Cl} and \eqref{eq:Ch}.  Then 
\begin{equation} \label{eq:asympbias_mjp}
\W^2(\mu, \tilde{\mu})^2 \, \le \, 8 \, \gamma^{-1} \left(1+ \frac{\lambda^2}{K} \right)  \, L \, \left(L^{3/2} K^{-1} h^4 d + h^3 d \right) \;.
\end{equation}
\end{theorem}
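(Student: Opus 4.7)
The plan is to bound the $L^2$-Wasserstein asymptotic bias by running the synchronous coupling of the randomized and exact processes generated by $\mathcal{A}^C$ in \eqref{eq:AC_asympbias}, with both components initialized from a common sample of the Boltzmann-Gibbs invariant measure $\mu_{BG}$ of the exact dynamics. Because $\mu_{BG}$ is invariant under the exact semigroup $(p_t)$, the first marginal of the coupling $Y_t = ((X_t, V_t), (\tilde{X}_t, \tilde{V}_t))$ has law $\mu_{BG}$ for every $t \ge 0$. Combined with the triangle inequality
\begin{equation*}
\W^2(\mu_{BG}, \tilde{\mu}_{BG}) \le \W^2(\mu_{BG}, \mu_{BG}\tilde{p}_t) + \W^2(\mu_{BG}\tilde{p}_t, \tilde{\mu}_{BG}),
\end{equation*}
whose second summand vanishes as $t \to \infty$ by Theorem~\ref{thm:contr_mjp}, this reduces the task to uniformly controlling $E[|X_t - \tilde{X}_t|^2]$ on $[0,\infty)$.

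To execute this, I would apply Dynkin's formula to the distorted Lyapunov function $\rho^2$ of \eqref{eq:dist_metric}, giving $E[\rho(Y_t)^2] = E[\rho(Y_0)^2] + \int_0^t E[\mathcal{A}^C \rho(Y_s)^2]\, ds$. Synchronization at initialization makes $\rho(Y_0) = 0$. Inserting the infinitesimal drift estimate of Lemma~\ref{lem:inf_drif_mjp} and taking expectations, the stationarity of $(X_s, V_s)$ under $\mu_{BG}$ together with $E[|X_s|^2] \le d/K$ from Remark~\ref{rmk:remark1} and $E[|V_s|^2] = d$ (Gaussian velocity marginal) reduce the forcing term to the deterministic constant $C := \tfrac{1}{2}(1 + \lambda^2/K) L h^3 (L^{3/2} K^{-1} h d + d)$. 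A scalar Gronwall inequality on the resulting ODE
\begin{equation*}
\tfrac{d}{dt} E[\rho(Y_t)^2] \le -\tfrac{\gamma}{2} E[\rho(Y_t)^2] + C
\end{equation*}
then yields $\sup_{t \ge 0} E[\rho(Y_t)^2] \le 2C/\gamma = \gamma^{-1}(1 + \lambda^2/K) L (L^{3/2}K^{-1} h^4 d + h^3 d)$.

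To convert this back to a Euclidean distance on positions, I would use the sharpened lower bound $\rho(y)^2 \ge \tfrac{1}{8}|z|^2$, obtained by applying Peter-Paul to the cross term $\tfrac{\lambda^{-1}}{2}\langle z, w\rangle$ in \eqref{eq:dist_metric} with weight $c = 2\lambda^{-1}$: specifically $\tfrac{\lambda^{-1}}{2}|z||w| \le \tfrac{1}{8}|z|^2 + \tfrac{\lambda^{-2}}{2}|w|^2$, whose $|w|^2$ contribution cancels against $\lambda^{-2}|w|^2$ and leaves $\rho^2 \ge \tfrac{1}{8}|z|^2$. This gives $E[|X_t - \tilde{X}_t|^2] \le 8\, E[\rho(Y_t)^2]$, producing the stated bound after passing to the $t \to \infty$ limit and invoking the triangle-inequality argument from the opening paragraph (noting that projection to the position marginal is a contraction in $\W^2$, so convergence on phase space descends to convergence of $\mathrm{proj}_1(\mu_{BG}\tilde{p}_t) \to \tilde{\mu}$).

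The main obstacle I anticipate is matching the numerical prefactor $8$ precisely: the combined equivalence bound in \eqref{eq:equiv_dist_metric} would introduce an extraneous $\max(1, \lambda^2/4)$ factor which, in the operative regime $\lambda^2 \gg L$ enforced by \eqref{eq:Cl}, would exceed the constant $8$ in the statement, so the position-only Peter-Paul sharpening above is essential. A secondary technicality is justifying Dynkin's formula for the unbounded function $\rho^2$, handled in the standard way by localization via stopping times $\tau_n = \inf\{t : \rho(Y_t)^2 > n\}$, with the drift inequality itself providing the integrable majorant needed to pass to the limit.
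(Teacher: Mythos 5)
Your proof is correct, and it takes a genuinely different route from the paper's. The paper initializes the coupling $(Y_t)$ from the \emph{product} measure $\mu_{BG}\otimes\tilde\mu_{BG}$, so that both marginals of $Y_t$ remain at their respective stationary measures for every $t$; this makes $(Q_t,\tilde Q_t)$ a direct coupling of $\mu$ and $\tilde\mu$ and yields $\W^2(\mu,\tilde\mu)^2\le \frac{16}{3}E\rho(Y_t)^2$, after which Lemma~\ref{lem:inf_drif_mjp} and It\^{o}'s formula give a one-shot bound. You instead start both components from a \emph{common} draw of $\mu_{BG}$, making $\rho(Y_0)=0$, and then import the Mattingly-style triangle inequality (exactly as the paper does in the non-randomized Theorem~\ref{thm:asymptotic_bias}), controlling $\W^2(\mu_{BG}\tilde p_t,\tilde\mu_{BG})\to 0$ via the contraction in Theorem~\ref{thm:contr_mjp}. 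Both routes use the same drift lemma, stationary moment bounds, and Gronwall integration, and both land on the same prefactor $8$. Your approach needs the contraction theorem as an explicit input but trivializes the initial condition and avoids having to make sense of a finite-$\W^2$ coupling of the two stationary measures a priori; the paper's approach is slightly more direct once the product coupling is set up (and uses the marginally sharper comparison $\rho^2\ge\frac{3}{16}|z|^2$, obtained by completing the square, versus your Peter-Paul bound $\rho^2\ge\frac{1}{8}|z|^2$, though both suffice). You are also right that the generic equivalence \eqref{eq:equiv_dist_metric} is too lossy in the regime $\lambda^2\gg L$ enforced by \eqref{eq:Cl} and that a position-only comparison is essential to recover the stated constant. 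One small stylistic note: your $\W^2$ triangle inequality is stated on phase space but then used only through position marginals; it is cleaner to state it directly on position space and invoke contractivity of the position projection once, as you do later.
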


\begin{remark}[Why duration randomization?] \label{rmk:durationrandomization}
Since the number of jumps of the randomized uHMC process over $[0,t]$ is a Poisson process with intensity $\lambda + h^{-1}$, the mean number of steps  of $\Theta_h$ (and hence, gradient evaluations) over a time interval of length $t$ is $t/h$. Let $\nu$ be the initial distribution of the randomized uHMC process.  We choose $\lambda$ to saturate  \eqref{eq:Cl}, i.e., $\lambda = 12 L^{1/2}$.  The contraction rate in \eqref{eq:inf_contr_mjp} then becomes \[ \gamma \ = \   120^{-1} K^{1/2}  \left( \frac{K}{L} \right)^{1/2} \;.   \] 
According to Theorem~\ref{thm:contr_mjp}, to obtain $\varepsilon$-accuracy in $\W^2$ w.r.t.\ $\tilde{\mu}$, $t$ can be chosen such that  \begin{equation}
t \ = \ 240 K^{-1/2} \left( \frac{L}{K} \right)^{1/2}  \log\left( \frac{3 \max( 12^{-1} L^{1/2}, 12 L^{-1/2}) \W^2(\nu, \tilde{\mu}_{BG})}{\varepsilon} \right)^+ \;. \label{eq:comp_t_mjp} 
\end{equation} However, since $\tilde{\mu}$  is inexact, to resolve the asymptotic bias to $\varepsilon$-accuracy in $\W^2$, Theorem~\ref{thm:asympbias_mjp} indicates that it suffices to choose $h$ such that 
\begin{equation*} 8 \cdot 120 \cdot 13 \cdot \left( K d \left(\frac{L}{K} \right)^4 h^4 + K^{1/2} d \left(\frac{L}{K} \right)^{5/2} h^3  \right) \ \le \ \varepsilon^2 \;. 
\end{equation*}  In other words, it suffices to choose $h$ such that \begin{equation}
\label{eq:comp_h_mjp}
h^{-1} \ge 2 \max\left( (8 \cdot 195)^{1/4} K^{1/2} \left( \frac{d}{K} \right)^{1/4} \frac{L}{K} \varepsilon^{-1/2}, 2 (390)^{1/3} K^{1/2} \left( \frac{d}{K} \right)^{1/3} \left(\frac{L}{K} \right)^{5/6}  \varepsilon^{-2/3} \right) \;.
\end{equation}
Combining \eqref{eq:comp_t_mjp} and \eqref{eq:comp_h_mjp} gives an overall complexity  of \begin{equation} \label{eq:comp_mjp}
\begin{aligned}
\scriptsize
\frac{t}{h} \propto \max\left( \left( \frac{d}{K}  \right)^{1/4}  \left(\frac{L}{K} \right)^{3/2} \varepsilon^{-1/2} , \left( \frac{d}{K} \right)^{1/3} \left(\frac{L}{K} \right)^{4/3} \varepsilon^{-2/3}   \right)\times \log\left( \frac{\max(L^{1/2}, L^{-1/2}) \W^2(\nu, \tilde{\mu}_{BG})}{\varepsilon} \right)^+  \;.
\end{aligned}
\end{equation}
Note, this complexity guarantee has a better dependence on the condition number to the one obtained in Remark~\ref{rmk:complexity} for uHMC with fixed duration. Additionally, in the high condition number regime $L/K > (d/K)^{1/2} \epsilon^{-1}$,  the dependence on both the dimension and accuracy also improves.  
\end{remark}

\begin{proof}[Proof of Theorem~\ref{thm:asympbias_mjp}]
Let $(Y_t)_{t \ge 0}$ be the coupling process generated by $\mathcal{A}^C$ in \eqref{eq:AC_asympbias} with initial condition $Y_0 = ((Q_0, V_0), (\tilde{Q}_0, \tilde{V}_0)) \sim \mu_{BG} \otimes \tilde{\mu}_{BG}$.  Fix $t>0$.  Then by the coupling characterization of the $L^2$-Wasserstein distance, and It\^{o}'s formula for jump processes  applied to $t \mapsto e^{\gamma t /2} \rho(Y_t)^2$, we obtain
   \begin{align*}
&\W^2(\mu, \tilde{\mu})^2 \le E( |Q_0 - \tilde{Q}_0|^2 )  \le  \frac{16}{3} E\left( \frac{3}{16} |Q_0 - \tilde{Q}_0|^2 + \lambda^{-2} \left| V_0 - \tilde{V}_0 + \frac{\lambda}{4} (Q_0 - \tilde{Q}_0) \right|^2 \right)  \\
& \, \le \,  \frac{16}{3} E \rho(Y_0)^2   = \frac{16}{3} E \rho(Y_t)^2 \\
& \, \le \, 
\frac{16}{3} 
\left( e^{-\gamma t/2} E \rho(Y_0)^2 + \int_0^t e^{-\gamma (t-s)/2} \left( \frac{\gamma}{2} \rho(Y_s)^2 + \mathcal{A}^C \rho(Y_s)^2 \right) ds \right)  \\
& \, \overset{Lem.~\ref{lem:inf_drif_mjp}}{\le} \, 
8 e^{-\gamma t/2} E \rho(Y_0)^2  + 4 \, \left(1+ \frac{1}{K \lambda^{-2}} \right)  \, L \, h^3 \, \int_0^t e^{-\gamma (t-s)/2} \left( L^{3/2} h E|Q_s|^2 + E|p_s|^2 \right) ds 
\;.
\end{align*} 
Since the exact process leaves $\mu_{BG}$ invariant, the integrand in this expression simplifies
\begin{align*}
& \W^2(\mu, \tilde{\mu})^2
 \, \le \, 
8 e^{-\gamma t/2} E \rho(Y_0)^2  + 4 \, \left(1+ \frac{1}{K \lambda^{-2}} \right)   \, L \, h^3 \,  \left(   L^{3/2} h E|Q_0|^2 + d \right) \,  \int_0^t e^{-\gamma (t-s)/2}  ds  \\
 & \qquad \overset{Rem.~\ref{rmk:remark1}}{\le} \, 8 \, \gamma^{-1} \left(1+ \frac{\lambda^2}{K} \right)  \, \frac{L}{K} \, h^3  \, d \, \left(  L^{3/2} h + K \right) \;. \end{align*} 
Simplifying this expression  gives \eqref{eq:asympbias_mjp}.
\end{proof}

\subsection{Proofs for Randomized uHMC}

\label{sec:randomized_uHMC_proofs}

\begin{proof}[Proof of Lemma~\ref{lem:inf_contr_mjp}]

Let $F_{\mathcal{U}} := F(x+ \mathcal{U} v)$, $\tilde{F}_{\mathcal{U}} := F(\tilde{x}+ \mathcal{U} \tilde{v})$,  $Z_{\mathcal{U}} := z + \mathcal{U} w$, and  $\Delta F_{\mathcal{U}} := F_{\mathcal{U}} - \tilde{F}_{\mathcal{U}}$.
Note that by \ref{A2}, \ref{A3} and \eqref{eq:cocoercive}, \begin{equation} \label{ieq:coco}    K \, |Z_{\mathcal{U}}|^2 \, \overset{\ref{A3}}{\le} \, - \langle Z_{\mathcal{U}}, \Delta F_{\mathcal{U}}\rangle   \,  \overset{\ref{A2}}{\le} \, L \, |Z_{\mathcal{U}}|^2 \, \;, \quad  |\Delta F_{\mathcal{U}}|^2 \, \overset{\eqref{eq:cocoercive}}{\le} \, - L  \, \langle Z_{\mathcal{U}}, \Delta F_{\mathcal{U}} \rangle   \;. \end{equation} 
The idea of this proof is to decompose $\tilde{\mathcal{G}}^C \rho(y)^2$ into two terms: a gain $\Gamma_0$ and loss $\Lambda_0$, and use \eqref{ieq:coco} and the hyperparameter assumptions, to obtain an overall gain.

\smallskip

To this end, evaluate \eqref{eq:GC_contr} at $f(y) = \rho(y)^2$ to obtain, \begin{align}
& \tilde{\mathcal{G}}^C \rho(y)^2 \, = \,  h^{-1} E \left(  \rho(\Theta_h(x,v,\mathcal{U}),\Theta_h(\tilde{x},\tilde{v},\mathcal{U}))^2 - \rho(y)^2 \right)  \nonumber \\
& \quad +  \lambda E \left( \rho( (x,\xi), (\tilde{x}, \xi) )^2 - \rho( y )^2 \right) \nonumber \\
 & \, = \, \ \
 \lambda^{-1} \left( \frac{\lambda h}{4} + \frac{1}{2} \right) E \langle Z_{\mathcal{U}}, \Delta F_{\mathcal{U}} \rangle \nonumber \\
 & \quad + \lambda^{-1} \left( -\frac{1}{2} + \frac{\lambda h}{4} \right) |w|^2  +  \lambda^{-1} E \left[ \left( \frac{\lambda^2 h^2}{4} +  \frac{3 \lambda h}{4} + 2 - \lambda \mathcal{U} ( \frac{\lambda h}{4}  + \frac{1}{2} )  \right) \lambda^{-1}  \langle w, \Delta F_{\mathcal{U}} \rangle \right]  \nonumber  \\
 & \quad  + \lambda^{-1}  \left( \frac{\lambda^3 h^3}{16} + \frac{\lambda^2 h^2}{4} +  \lambda h \right) \lambda^{-2} E |\Delta F_{\mathcal{U}}|^2  \, = \, \Gamma_0 + \Lambda_0 \quad \text{where} \nonumber \\
& \Gamma_0 :=  \lambda^{-1} \left( \frac{\lambda h}{4} + \frac{1}{2} \right) E \langle Z_{\mathcal{U}}, \Delta F_{\mathcal{U}} \rangle -  \lambda^{-1} \frac{1}{2} |w|^2 \label{eq:gain_1} \\
& \Lambda_0 := \lambda^{-1} \left( \frac{\lambda^3 h^3}{16} + \frac{\lambda^2 h^2}{4} +  \lambda h   \right) \lambda^{-2} E |\Delta F_{\mathcal{U}}|^2 
   +  \lambda^{-1} \left(  \frac{\lambda h}{4}   \right) |w|^2 \label{eq:loss_1}  \\
    & \quad +  \lambda^{-1} E \left[ \left( \frac{\lambda^2 h^2}{4} +  \frac{3 \lambda h}{4} + 2 - \lambda \mathcal{U} ( \frac{\lambda h}{4}  + \frac{1}{2} )  \right) \lambda^{-1}  \langle w, \Delta F_{\mathcal{U}} \rangle \right] \;.  \nonumber
\end{align}
Note that the term $\lambda^{-1} E \left[\lambda \mathcal{U} ( \frac{\lambda h}{4}  + \frac{1}{2} )  \lambda^{-1}  \langle w, \Delta F_{\mathcal{U}} \rangle \right]$ was added and subtracted in order to leverage the co-coercivity property of $\nabla U$ at $Z_{\mathcal{U}}$; see \eqref{ieq:coco}.
By the Peter-Paul inequality with parameter $(L \lambda^{-2})^{1/2}$,  $\lambda h \le 1$, and \eqref{ieq:coco}, \begin{align}
\Lambda_0 &  \,   \le \, \lambda^{-1} \left( \frac{\lambda^3 h^3}{16} + \frac{\lambda^2 h^2}{4} +  \lambda h  + \frac{1}{2 (L \lambda^{-2})^{1/2}} \left( \frac{\lambda^2 h^2 }{4}  + \frac{3 \lambda h}{4} + 2  \right) \right) \lambda^{-2} E |\Delta F_{\mathcal{U}}|^2 \nonumber
 \\
 & \quad   +  \lambda^{-1} \left(  \frac{\lambda h}{4} + \frac{(L \lambda^{-2})^{1/2}}{2} \left( \frac{\lambda^2 h^2}{4}  + \frac{3 \lambda h}{4} + 2 \right)  \right) |w|^2 \nonumber \\
& \, \overset{\eqref{eq:Ch}}{\le} \,  \lambda^{-1} \left( \frac{21}{16} + \frac{3}{2 (L \lambda^{-2})^{1/2}}  \right) \lambda^{-2} E |\Delta F_{\mathcal{U}}|^2 + \lambda^{-1} \left( \frac{1}{4}  + \frac{3 (L \lambda^{-2})^{1/2}}{2}  \right) |w|^2  \nonumber  \\
 \, & \overset{\eqref{ieq:coco}}{\le} \, - \lambda^{-1} \left(   \frac{21 L \lambda^{-2}}{16} + \frac{3 (L \lambda^{-2})^{1/2}}{2}  \right)  E \langle Z_{\mathcal{U}}, \Delta F_{\mathcal{U}} \rangle  + \lambda^{-1} \left( \frac{1}{4}  + \frac{3 (L \lambda^{-2})^{1/2}}{2}  \right) |w|^2 \nonumber \\
   \, & \overset{\eqref{eq:Cl}}{\le} \, - \lambda^{-1} \left(   \frac{21 }{16 \cdot 12^2} + \frac{1}{8}  \right)  E \langle Z_{\mathcal{U}}, \Delta F_{\mathcal{U}} \rangle  +   \lambda^{-1} \frac{3 }{8}  |w|^2  \;. \label{ieq:loss_1}
\end{align}
Combining $\Gamma_0$ in \eqref{eq:gain_1} with the upper bound on $\Lambda_0$ in \eqref{ieq:loss_1} yields
an overall gain
\begin{align*}
\tilde{\mathcal{G}}^C \rho(y)^2  \, \le \, - \frac{1}{8} \lambda^{-1} \left( -\frac{5}{2} E \langle Z_{\mathcal{U}}, \Delta F_{\mathcal{U}} \rangle + |w|^2 \right)  \, & \overset{\eqref{ieq:coco}}{\le} \, - \frac{1}{8} \lambda^{-1} \left( \frac{5}{2} K E |Z_{\mathcal{U}}|^2 + |w|^2 \right) \;.
\end{align*}
 By the Peter-Paul inequality with parameter $h$ and noting that ${\mathcal{U} \sim \operatorname{Uniform}(0,h)}$, \begin{align*} 
 \tilde{\mathcal{G}}^C \rho(y)^2 \, &\le \, - \frac{1}{8} \lambda^{-1} \left( \frac{5}{2}  K ( |z|^2 + h \langle z, w \rangle + \frac{h^2}{3} |w|^2)  +  |w|^2 \right) \\
 \, & \le \, - \frac{1}{8} \lambda^{-1} \left(  \frac{5}{4} K  |z|^2   + \left( 1 - \frac{5}{12} K h^2 \right) |w|^2 \right) \overset{\eqref{eq:Cl}}{\le} - \frac{1}{9} \lambda^{-1} \left( K |z|^2 + |w|^2 \right) \;, 
\end{align*} where in the last step we used  $K \le L$, $\lambda h \le 1$ and  $(L \lambda^{-2} )^{1/2} \le 12^{-1}$.  Inserting \eqref{eq:straight_metric} and simplifying yields the required infinitesimal contraction result, i.e., \begin{align*} 
 \tilde{\mathcal{G}}^C \rho(y)^2 \le   -\frac{1}{9} \lambda^{-1} \min(\frac{12}{13}  \lambda^2, K  )  \rho(y)^2  \le  -\frac{1}{10}  \min(  \lambda, K \lambda^{-1} )  \rho(y)^2 
 =  -\frac{K}{10 \lambda}   \rho(y)^2
 \;,
\end{align*} 
since $K \lambda^{-2} \le 12^{-1} < 1$ implies $K < \lambda^2$, and hence, $\min(\lambda, K \lambda^{-1}) = K \lambda^{-1}$.  
\end{proof}



\begin{proof}[Proof of Lemma~\ref{lem:inf_drif_mjp}]
Let $F_{\mathcal{U}} := F(x+ \mathcal{U} v)$, $\tilde{F}_{\mathcal{U}} := F(\tilde{x}+ \mathcal{U} \tilde{v})$, $Z_{\mathcal{U}} := z + \mathcal{U} w$, and  $\Delta F_{\mathcal{U}} := F_{\mathcal{U}} - \tilde{F}_{\mathcal{U}}$.  The idea of this proof is related to the proof of Lemma~\ref{lem:inf_contr_mjp}: we carefully decompose $\mathcal{A}^C \rho(y)^2$ into a gain $\Gamma_0$, loss $\Lambda$, and also, a discretization error $\Delta$, and use \eqref{ieq:coco} and the hyperparameter assumptions, to obtain a gain from the contractivity of the underlying randomized uHMC process up to discretization error. This estimate results in an infinitesimal drift condition, as opposted to an infinitesimal contractivity result. The quantification of the discretization error is related to the $L^2$-error estimates for the sMC time integrator developed in Lemma~\ref{lem:smc_strong_accuracy}, though the semi-exact flow only implicitly appears below, since the proof involves a one step analysis.      

\smallskip

As a preliminary step, we develop some estimates that are used to bound the discretization error. Recall that $(q_s(x,v),p_s(x,v))$ denotes the exact Hamiltonian flow.   Since $L h^2 \le 12^{-2} \le 1/4$ (by the hypotheses: $(L \lambda^{-2})^{1/2} \le 1/12$ and $\lambda h \le 1$),  \eqref{apriori:2} and the Cauchy-Schwarz inequality imply that \begin{align}
\label{apriori:2aa}
\sup_{s \le h} |p_s(x,v)|^2 &\le 3 |v|^2 + 4 L^2 h^2 (|x|^2 + h^2 |v|^2) \le 4 L^2 h^2  |x|^2   + \frac{31}{10} |v|^2  \;.
\end{align}
As a shorthand, define   \begin{align*}
F_1 &:= \frac{2}{h^2} \int_0^h (h-s) F(q_s(x,v)) ds \;,  ~  \Delta F_1 := \frac{2}{h^2} \int_0^h (h-s) [F(q_s(x,v)) - F_{\mathcal{U}}] ds \;,   \\
F_2 &:= \frac{1}{h} \int_0^h  F(q_s(x,v)) ds \;, ~ \text{and} ~~ \Delta F_2 := \frac{1}{h} \int_0^h [F(q_s(x,v)) - F_{\mathcal{U}}] ds \;. \end{align*}
Then by the Cauchy-Schwarz  inequality \begin{align}
E|\Delta F_1|^2 \, &= \, E \left| \frac{2}{h^2} \int_0^h (h-s) [ F(q_s) - F_{\mathcal{U}}  ] ds \right|^2  
\, \le \, \frac{4}{h^4} \frac{h^3}{3} \int_0^h E | F(q_s) - F_{\mathcal{U}}  |^2 ds 
\nonumber \\
\, & \overset{\ref{A2}}{\le} \, \frac{4}{h^4} \frac{L^2 h^3}{3} \int_0^h E | q_s- x - \mathcal{U} v  |^2 ds \, = \,  \frac{4}{h^4} \frac{L^2 h^3}{3} \int_0^h E | \int_0^s v_r dr - \mathcal{U} v  |^2 ds \nonumber \\
\, & \le \,  \frac{4}{h^4} \frac{2 L^2 h^3}{3}  \left(  \int_0^h |\int_0^s v_r dr|^2 ds +  h^3 |v|^2 \right) \, \le \,   \frac{8 L^2 h^2}{3}  \left(  \sup_{s \le h} |p_s|^2  +  |v|^2 \right) \nonumber \\
\, & \overset{\eqref{apriori:2aa}}{\le} \,  \frac{8 L^2 h^2}{3} \left(   4 L^2 h^2 |x|^2  +  \frac{41}{10} |v|^2 \right)  
\label{eq:delta_F_1}
\end{align}
where in the next to last step we used Young's product inequality.  Similarly, \begin{align}
E|\Delta F_2|^2 \, &= \, E \left| \frac{1}{h} \int_0^h [F(q_s(x,v))  - F_{\mathcal{U}}] ds
\right|^2 \, \le  \, h^{-1} E \int_0^h |F(q_s(x,v))  - F_{\mathcal{U}}|^2 ds \nonumber \\
\, &\overset{\ref{A2}}{\le} \, \frac{L^2}{h}  \int_0^h E | \int_0^s v_r dr - \mathcal{U} v  |^2 ds \, \le \, \frac{2 L^2}{h}  \left( \int_0^h | \int_0^s v_r dr|^2 ds +  h^3 | v  |^2 \right) \nonumber \\
\, &\le  2 L^2 h^2 \left(   \sup_{s \le h} |p_s|^2  +  |v|^2   \right)  
\overset{\eqref{apriori:2aa}}{\le}
2 L^2 h^2  \left( 4 L^2 h^2 |x|^2 + \frac{41}{10} |v|^2 \right) 
\;.  \label{eq:delta_F_2}
\end{align}
Combining \eqref{eq:delta_F_1} and \eqref{eq:delta_F_2} we obtain \begin{equation}
E|\Delta F_1|^2  \vee E|\Delta F_2|^2  \le \frac{8}{3} L^2 h^2  \left( 4 L^2 h^2 |x|^2 + \frac{41}{10} |v|^2 \right)  \le 11 L^2 h^2  \left( L^2 h^2 |x|^2 +  |v|^2 \right)  \;.  \label{eq:delta_F}
\end{equation}
In order to obtain a sharp error estimate for the sMC time integrator, the following upper bound is crucial \begin{align}
| E \Delta F_2 | \, &= \,  \left| \frac{1}{h} E \int_0^h [F(q_s(x,v))  - F_{\mathcal{U}}] ds
\right|  = \left| \frac{1}{h} \int_0^h [F(q_s(x,v))  - F(x+s v)] ds
\right| \nonumber \\
\, & \overset{\ref{A2}}{\le} \,  \frac{L}{h} \int_0^h |q_s(x,v)  - (x+s v)| ds \, \le \,  \frac{L}{h} \int_0^h \left| \int_0^s (s-r) F(q_r(x,v)) dr \right| ds \nonumber \\
\, & \overset{\ref{A2}}{\le} \, 
\frac{L}{h} \int_0^h  \int_0^s (s-r) |F(q_r(x,v))| dr  ds \, \le \, \frac{L^2 h^2}{6} ( |x| + h \sup_{s \le h} |p_s| )  \nonumber \\
\,  & \overset{\eqref{apriori:2}}{\le} \, \frac{L^2 h^2}{6} (|x| + h [ |v| + L h (1+L h^2) (|x|+ h |v| )])
\, \overset{\eqref{eq:Cl}}{\le} \, \frac{L^2 h^2}{5} (   |x| + h |v| ) \;.  \nonumber
\end{align}
Thus, by Cauchy-Schwarz inequality, \begin{equation}
| E \Delta F_2 |^2 \, \le \, 
\frac{2 L^4 h^4}{25} (  |x|^2 + h^2 |v|^2 ) \, \overset{\eqref{eq:Cl}}{\le} \, 
\frac{L^{3} h^4}{12} (  L |x|^2 +  |v|^2 ) \;,
\label{eq:mean_delta_F_2}
\end{equation}
in the last step the numerical pre-factor was simplified by using $\lambda h \le 1$ and $(L \lambda^{-2})^{1/2} \le 12^{-1}$.  

\smallskip

Evaluate \eqref{eq:AC_asympbias} at $f(y) = \rho(y)^2$ and expand to obtain \begin{align}
& \mathcal{A}^C \rho(y)^2 \, = \,  h^{-1} E \left(  \rho(\theta_h(x,v),\Theta_h(\tilde{x},\tilde{v},\mathcal{U}))^2 - \rho(y)^2 \right) \nonumber  \\
 & \qquad \qquad +
 \lambda E \left( \rho( (x,\xi), (\tilde{x}, \xi) )^2 - \rho( y )^2 \right) \nonumber \\
 & \, = \, \ \
 \lambda^{-1}  \frac{1}{2}  E \langle z, F_2-\tilde{F}_{\mathcal{U}} \rangle + 
  \lambda^{-1} \frac{\lambda h}{4} E \langle z, F_1-\tilde{F}_{\mathcal{U}} \rangle  + \lambda^{-1} \left( -\frac{1}{2} + \frac{\lambda h}{4} \right) |w|^2
\nonumber \\
 & \quad +  \lambda^{-1} E  \left( \frac{\lambda^2 h^2}{4} +  \frac{\lambda h}{4}   \right) \lambda^{-1}  E\langle w, F_1-\tilde{F}_{\mathcal{U}} \rangle    +  \lambda^{-1} \left( \frac{\lambda h}{2} + 2   \right) \lambda^{-1}  E\langle w, F_2-\tilde{F}_{\mathcal{U}} \rangle   \nonumber  \\
   & \quad  + \lambda^{-1}  \left( \frac{\lambda^3 h^3}{16}  \right) \lambda^{-2} E |F_1-\tilde{F}_{\mathcal{U}}|^2    + \lambda^{-1} \,  (\lambda h) \, \lambda^{-2} E |F_2-\tilde{F}_{\mathcal{U}}|^2  \nonumber \\
   & \quad  + \lambda^{-1}  \left( \frac{\lambda^2 h^2}{4}  \right) \lambda^{-2} E \langle F_1-\tilde{F}_{\mathcal{U}}, F_2-\tilde{F}_{\mathcal{U}} \rangle  \nonumber \\
\, &= \, \Gamma_0 + \Lambda_0 
 + \lambda^{-1} \frac{\lambda h}{4} E \langle z, \Delta F_1 \rangle  + \lambda^{-1}  \frac{1}{2}  \langle z,   E\Delta F_2 \rangle 
\nonumber \\
 & \quad +  \lambda^{-1}  \left( \frac{\lambda^2 h^2}{4} +  \frac{\lambda h}{4}   \right) E \langle w,\lambda^{-1}  \Delta F_1 \rangle    +  \lambda^{-1}  \left( \frac{\lambda h}{2} + 2   \right)   \langle w,\lambda^{-1} E\Delta F_2 \rangle 
 \nonumber  \\
   & \quad  + \lambda^{-1}  \left( \frac{\lambda^3 h^3}{8} + \frac{\lambda^2 h^2}{4}  \right)  E \langle \lambda^{-1} \Delta F_1, \lambda^{-1} \Delta F_{\mathcal{U}} \rangle   \nonumber \\
   & \quad + \lambda^{-1}  \left( \frac{\lambda^2 h^2}{4} + 2 \lambda h \right)  E \langle \lambda^{-1} \Delta F_2, \lambda^{-1} \Delta F_{\mathcal{U}} \rangle \nonumber \\
   & \quad  + \lambda^{-1}  \left( \frac{\lambda^3 h^3}{16}  \right) \lambda^{-2} E |\Delta F_1|^2    + \lambda^{-1} \,  (\lambda h) \, \lambda^{-2} E |\Delta F_2|^2  \nonumber \\
      & \quad  + \lambda^{-1}  \left( \frac{\lambda^2 h^2}{4}  \right)  E \langle \lambda^{-1} \Delta F_1, \lambda^{-1} \Delta F_2 \rangle \;,    \nonumber
\end{align}
where $\Gamma_0$ and $\Lambda_0$  are the loss and gain from $\tilde{\mathcal{G}}^C \rho(y)^2$ in \eqref{eq:gain_1} and \eqref{eq:loss_1}, respectively. 
Now we decompose $\mathcal{A}^C \rho(y)^2 $ 
\begin{align}
& \mathcal{A}^C \rho(y)^2   \, 
 \, \le \, \Gamma_0 + \Lambda + \Delta \quad \text{where}  \label{eq:gain_2} \\
& \Lambda \, := \,  \Lambda_0 + \lambda^{-1} \frac{K}{18}  |z|^2 + \lambda^{-1}  \frac{1}{18}  |w|^2  +  \lambda^{-1} \left( \frac{\lambda^3 h^3}{16}  + \frac{\lambda^2 h^2}{4} +  \lambda h  \right) \lambda^{-2} E|\Delta F_{\mathcal{U}}|^2  \;,  \label{eq:loss_2} \\
& \Delta \, := \,  \lambda^{-1} \left( \frac{9}{4 K \lambda^{-2}} + 36 \left( 1 + \frac{\lambda h}{4} \right)^2 \right) \lambda^{-2} |E \Delta F_2|^2 \nonumber \\
& \quad \, + \, \lambda^{-1} \left( \frac{\lambda^3 h^3}{8}  + \frac{\lambda^2 h^2}{4} 
+ \frac{9 \lambda^2 h^2}{16 K \lambda^{-2}} 
+ 36 \left( \frac{\lambda^2 h^2}{8} + \frac{\lambda h}{8} \right)^2  \right) \lambda^{-2} E |\Delta F_1|^2 \nonumber \\ 
& \quad  \, + \, \lambda^{-1} \left( 
2 \lambda h  + \frac{\lambda^2 h^2}{4}  
\right) \lambda^{-2} E |\Delta F_2 |^2  \;.\label{eq:error_2} 
\end{align}
Here the loss $\Lambda$ in $\mathcal{A}^C \rho(y)^2$ was separated from the discretization error $\Delta$ by applying the Peter-Paul inequality to the four cross terms involving $z$ or $w$ (i.e., $E \langle z, \Delta F_1 \rangle$, $E \langle z,  \Delta F_2 \rangle$, $E \langle w,\lambda^{-1}  \Delta F_1 \rangle $, and $E \langle w,\lambda^{-1} \Delta F_2 \rangle $), and Young's product inequality for the remaining cross terms. As expected, the gain in \eqref{eq:gain_2} is the same as the gain in \eqref{eq:gain_1}.  We next bound the loss and discretization error terms separately.

\smallskip

For the loss $\Lambda$ in \eqref{eq:loss_2}, apply the Peter-Paul inequality with parameter $(L \lambda^{-2})^{1/2}$,  $\lambda h \le 1$, and \eqref{ieq:coco}, to obtain
\begin{align}
& \Lambda  \, \le \,  \lambda^{-1} \frac{K}{18}  |z|^2 + \lambda^{-1}  \frac{1}{18}  |w|^2 \nonumber \\
& \quad + \lambda^{-1} \left( \frac{\lambda^3 h^3}{8} + \frac{\lambda^2 h^2}{2} +  2 \lambda h  + \frac{1}{2 (L \lambda^{-2})^{1/2}} \left( \frac{\lambda^2 h^2 }{4}  + \frac{3 \lambda h}{4} + 2  \right) \right) \lambda^{-2} E |\Delta F_{\mathcal{U}}|^2 \nonumber
 \\
 & \quad   +  \lambda^{-1} \left(  \frac{\lambda h}{4} + \frac{(L \lambda^{-2})^{1/2}}{2} \left( \frac{\lambda^2 h^2}{4}  + \frac{3 \lambda h}{4} + 2 \right)  \right) |w|^2 \nonumber \\
& \, \overset{\eqref{eq:Ch}}{\le} \,  \lambda^{-1} \frac{K}{18}  |z|^2 + \lambda^{-1}  \frac{1}{18}  |w|^2 \nonumber \\
& \quad + \lambda^{-1} \left( \frac{21}{8} + \frac{3}{2 (L \lambda^{-2})^{1/2}}  \right) \lambda^{-2} E |\Delta F_{\mathcal{U}}|^2 + \lambda^{-1} \left( \frac{1}{4}  + \frac{3 (L \lambda^{-2})^{1/2}}{2}  \right) |w|^2  \nonumber  \\
 \, & \overset{\eqref{ieq:coco}}{\le} \, \lambda^{-1} \frac{K}{18}  |z|^2 + \lambda^{-1}  \frac{1}{18}  |w|^2 \nonumber \\
& \quad - \lambda^{-1} \left(   \frac{21 L \lambda^{-2}}{8} + \frac{3 (L \lambda^{-2})^{1/2}}{2}  \right)  E \langle Z_{\mathcal{U}}, \Delta F_{\mathcal{U}} \rangle  + \lambda^{-1} \left( \frac{1}{4}  + \frac{3 (L \lambda^{-2})^{1/2}}{2}  \right) |w|^2 \nonumber \\
   \, & \overset{\eqref{eq:Cl}}{\le} \,    - \lambda^{-1} \left(   \frac{21 }{8 \cdot 12^2} + \frac{1}{8}  \right)  E \langle Z_{\mathcal{U}}, \Delta F_{\mathcal{U}} \rangle + \lambda^{-1} \frac{K}{18}  |z|^2   +   \lambda^{-1} \frac{31}{72}  |w|^2  \;. 
\label{ieq:loss_2}
\end{align}
Combining \eqref{eq:gain_2} with \eqref{ieq:loss_2}, and following the corresponding steps in the proof of Lemma~\ref{lem:inf_contr_mjp}, we obtain \begin{align}
\Gamma_0 + \Lambda  & \,  \le \,     \lambda^{-1} \left( \frac{137}{384}  \right) E \langle Z_{\mathcal{U}}, \Delta F_{\mathcal{U}} \rangle 
+ \lambda^{-1} \frac{K}{18} |z|^2
- \lambda^{-1} \frac{5}{72} |w|^2 \nonumber \\ 
& \, \overset{\eqref{ieq:coco}}{\le} \,    -\lambda^{-1} \left( \frac{137}{384}  \right) K E | Z_{\mathcal{U}} |^2 
+ \lambda^{-1} \frac{K}{18} |z|^2
- \lambda^{-1} \frac{5}{72} |w|^2 \nonumber \\
& \, \le \,     -\lambda^{-1} \left( \frac{137}{384}  \right) K  ( |z|^2 + h \langle z, w \rangle + \frac{h^2}{3} |w|^2)
+ \lambda^{-1} \frac{K}{18} |z|^2
- \lambda^{-1} \frac{5}{72} |w|^2 \nonumber \\
& \, \le \,     -\lambda^{-1} \left( \frac{137}{384}  \right) K  \left( \frac{1}{2} |z|^2 +  \left( \frac{h^2}{3} - \frac{ h^2}{2} \right) |w|^2 \right)
+ \lambda^{-1} \frac{K}{18} |z|^2
- \lambda^{-1} \frac{5}{72} |w|^2 \nonumber \\
& \, \overset{\eqref{eq:Cl}}{\le} \,     - \frac{1}{16} \lambda^{-1} \left( K |z|^2 + |w|^2 \right) \, \overset{\eqref{eq:straight_metric}}{\le}   - \frac{1}{16} \lambda^{-1} \min(\frac{12}{13}  \lambda^2, K  ) \, \rho(y)^2 \nonumber \\
& \, \le \, -\frac{1}{18}  \min(  \lambda, K \lambda^{-1} ) \, \rho(y)^2 \;.
\label{ieq:gain_loss_2}
\end{align}
Note that the numerical pre-factors appearing in the last few steps were simplified for readability.  

\smallskip

For the discretization error $\Delta$  in \eqref{eq:error_2}, 
apply $\lambda h \le 1$, and
insert \eqref{eq:delta_F} and \eqref{eq:mean_delta_F_2}, \begin{align}
\Delta  & \, = \,  \left( \frac{\lambda^2 h^2}{8}  + \frac{\lambda h}{4} + 36 \lambda h \left( \frac{\lambda h}{8} + \frac{1}{8} \right)^2
+ \frac{9 \lambda h}{16 K \lambda^{-2}} 
   \right) \lambda^{-2} h E |\Delta F_1|^2   \nonumber \\
& \quad \, + \, 
\left( 
2   + \frac{\lambda h}{4}  
\right) \lambda^{-2} h E |\Delta F_2 |^2  
 \, + \, \lambda^{-1} \left( 36 \left( 1 + \frac{\lambda h}{4} \right)^2 + \frac{9}{4 K \lambda^{-2}}  \right) \lambda^{-2} |E \Delta F_2|^2  \nonumber \\
& \, \overset{\eqref{eq:Ch}}{\le} \,   
 \left( \frac{21}{8}
+ \frac{9}{16 K \lambda^{-2}} 
  \right) \lambda^{-2} h E |\Delta F_1|^2  +  \left( 
 \frac{9}{4}  
\right) \lambda^{-2} h E |\Delta F_2 |^2 \nonumber \\
& \quad \, + \, 
\lambda^{-1} \left( 
\frac{225}{4}
+ \frac{9}{4 K \lambda^{-2}} 
\right) \lambda^{-2}  |E\Delta F_2 |^2
\nonumber \\
& \, \le \, 
11   \left( \frac{39}{8} + \frac{9}{16 K \lambda^{-2}}  
\right) \, (L \lambda^{-2})   \, L \, h^3  \, 
\left( L^2 h^2 |x|^2 +  |v|^2 \right) \nonumber \\
& \quad \, + \, 
\frac{1}{12} \left( 
\frac{225}{4}
+ \frac{9}{4 K \lambda^{-2}} 
\right)   ( L \lambda^{-2} )^{3/2}  
 \, L^{3/2} \, h^4  \, 
\left( L |x|^2 +  |v|^2 \right) 
\nonumber \\
& \, \overset{\eqref{eq:Cl}}{\le} \, \frac{1}{2} \left( 1 + \frac{1}{K \lambda^{-2}} \right)
\, L \, h^3  \, 
\left( L^{3/2} h |x|^2 +  |v|^2 \right)
\label{ieq:error_2}
\end{align}
where in the last step several of the numerical pre-factors were rounded up to unity for readability.

\smallskip

Combining \eqref{ieq:gain_loss_2} and \eqref{ieq:error_2} gives \eqref{eq:inf_drif_mjp} --- as required.
\end{proof}


\bibliographystyle{imsart-number} 
\bibliography{nawaf.bib}

\end{document}